\newtheorem{theorem}{Theorem}[section]
\newtheorem{lemma}[theorem]{Lemma}
\newcommand{\norm}[1]{\left\|#1\right\|}
\title{An adaptive GMsFEM for high-contrast flow problems}
\author{Eric T. Chung\thanks{Department of Mathematics, The Chinese University of Hong Kong, Hong Kong SAR. 
This research is partially supported by the Hong Kong RGC General Research Fund (Project number: 400411).},
Yalchin Efendiev\thanks{Department of Mathematics, Texas A\&M University, College Station, TX.}
and Guanliang Li\thanks{Department of Mathematics, Texas A\&M University, College Station, TX.}
}
\begin{document}
\maketitle

\begin{abstract}

In this paper, we derive an a-posteriori error indicator for the Generalized 
Multiscale Finite Element Method (GMsFEM) framework. 
This error indicator is further used
to develop an adaptive enrichment algorithm for
the linear elliptic equation with multiscale high-contrast coefficients.
The GMsFEM, which has recently been 
introduced in \cite{egh12}, allows solving
multiscale parameter-dependent problems at a reduced computational cost
by constructing a reduced-order representation of the solution on a coarse
grid. The main idea of the method consists of (1) the construction
of snapshot space, (2) the construction of the offline space, and (3)
the 
construction of the online space (the latter for parameter-dependent problems).
 In \cite{egh12}, it was shown that the GMsFEM
provides a flexible tool to solve multiscale problems 
with a complex input space by generating
appropriate snapshot, offline, and online spaces. In this paper,
we study an adaptive enrichment procedure and derive an 
a-posteriori error indicator which gives an estimate of the local error 
over coarse grid regions.
We consider two kinds of error indicators where one is based on
the $L^2$-norm of the local residual
and the other is based on the weighted $H^{-1}$-norm of the local residual
where the weight is related to the coefficient of the elliptic equation.
We show that the use of weighted $H^{-1}$-norm residual 
gives a more robust error indicator
which works well for cases with high contrast media. 
The convergence analysis of the method is given.
In our analysis, we do not consider the error due to the fine-grid
discretization of local problems and only study the errors 
due to the enrichment.
Numerical results are presented that demonstrate the robustness
of the proposed error indicators.

\end{abstract}

\section{Introduction}
\label{sec:intro}

Model reduction techniques are often required for solving 
challenging multiscale problems that have multiple scales and high contrast. 
Many of these model
reduction techniques perform the discretization of the problem on a coarse
grid where coarse grid size is much larger than the fine-grid discretization.
The latter requires constructing reduced order models for the solution space
on a coarse grid. Some of these techniques involve upscaled models (e.g., 
\cite{dur91, weh02}) or multiscale methods (e.g., \cite{Arbogast_two_scale_04, Chu_Hou_MathComp_10,ee03,
  egw10,eh09,ehg04, GhommemJCP2013,ReducedCon,MsDG,Wave,WaveGMsFEM}).

In this paper, we derive an a-posteriori error indicator for the Generalized 
Multiscale Finite Element Method (GMsFEM) framework
\cite{egh12}. 
This error indicator is further used
to develop an adaptive enrichment algorithm for
the linear elliptic equation with multiscale high-contrast coefficients.
 GMsFEM is a flexible general
framework that generalizes the Multiscale Finite Element Method (MsFEM)
(\cite{hw97})
by systematically enriching the coarse spaces and taking into account
small scale information and
complex input spaces. This approach, as in many
multiscale model reduction techniques, divides the computation into
two stages: the offline and the online. In the offline stage, 
a small dimensional space is constructed that can be 
used in the online stage to construct multiscale basis functions.
These multiscale basis functions can be re-used for any input parameter
to solve the problem on a coarse grid. The main idea behind the construction
of offline and online spaces is the selection of local spectral problems
and the selection of the snapshot space.
In \cite{egh12}, we propose several general strategies. In this paper,
we investigate adaptive enrichment procedures.

In previous findings \cite{egw10, eglp13}, a-priori error bounds
for the GMsFEM are derived for linear elliptic equations.
It was shown that the convergence rate is proportional to
the inverse of the eigenvalue that corresponds to the eigenvector
which is not included in the coarse space. Thus, adding more basis
functions will improve the accuracy and
it is important to include the eigenvectors that correspond
to very small eigenvalues  (\cite{egw10}). 
Rigorous a-posteriori
error indicators are needed to perform an adaptive enrichment
which is a subject of this paper. We would like to point out that 
there are many related activities in designing a-posteriori
error estimates \cite{ohl12, abdul_yun, dinh13, nguyen13, tonn11} 
 for global reduced models. 
The main difference is that our error estimators are
based on special local eigenvalue problem and use the eigenstructure
of the offline space.

In the paper, we consider two kinds of error indicators where one is based on
the $L^2$-norm of the local residual
and the other is based on the weighted $H^{-1}$-norm (we will also call it 
$H^{-1}$-norm based) of the local residual
where the weight is related to the coefficient of the elliptic equation.
We show that the use of weighted $H^{-1}$-norm residual 
gives a more robust error indicator
which works well for cases with high contrast media. 
The convergence analysis of the method is given.
In our analysis, we do not consider the error due to the fine-grid
discretization of local problems and only study the errors due 
to the enrichment.
In this regard, we assume that the error is largely 
due to coarse-grid discretization.
The fine-grid discretization error can be considered in general
(e.g., as in \cite{abdul_yun, ohl12}) and 
this will give an additional error estimator.
The proposed error indicators allow adding multiscale
basis functions in the regions detected by the error indicator.
The multiscale basis functions are  selected by choosing
next important eigenvectors (based on the increase of the eigenvalues)
 from the offline space.

The convergence proof of our adaptive enrichment algorithm is based on 
the techniques used for proving the convergence of adaptive refinement method
for classical conforming finite element methods for second order elliptic problems
\cite{BrennerScott,AdaptiveFEM}.
Contrary to \cite{AdaptiveFEM} where mesh refinement is considered,
we prove the convergence of our adaptive enrichment algorithm
as the approximation space is enriched for a fixed coarse mesh size.
The convergence is based on some
previously developed
spectral estimates. In particular, we use both stability
of the coarse-grid projection and the convergence of spectral interpolation.
Another key idea is that our error indicators are defined in a variational sense
instead of the pointwise residual of the differential equation.
By using this variational definition, 
we avoid the use of the gradient of the multiscale coefficient.
Moreover, our convergence analysis does 
not require that the gradient of the coefficient is bounded,
which is not the case for high-contrast multiscale flow problems.

In the proposed error indicators, we consider
the use of snapshot space in GMsFEM. In this case,
the residual contains an irreducible error due
to the difference between the snapshot solution
and the fine-grid solution. We consider the use of 
snapshot space for approximating the residual error
in the case of 
 weighted $H^{-1}$-norm of the local residual.

We present several numerical tests by considering
two different high-contrast multiscale permeability fields.
We study both 
 error indicators based on
the $L^2$-norm of the local residual
and  the weighted $H^{-1}$-norm of the local residual. 
Our numerical results show that 
 the use of weighted $H^{-1}$-norm residual 
gives a more robust error indicator
which works well for cases with high contrast media. 
In our numerical results, we also compare the results obtained
by the proposed
indicators and the exact error indicator which is computed
by considering the energy norm of the difference between
the fine-scale solution and the offline solution.
Our numerical results show that the use of the exact error
indicator gives nearly similar results to the case of using
weighted $H^{-1}$ error indicator.
In our studies, we also consider
the errors between the fine-grid solution and the offline
solution as well as the snapshot solution and the offline solution.
All cases show that the proposed  error indicator
is robust and can be used to detect 
the regions where additional multiscale basis 
functions are needed.

The paper is organized in the following way. 
In the next section, we present Preliminaries. 
The GMsFEM is presented in Section \ref{cgdgmsfem}.
In Section 
\ref{sec:errorindicator},
we present the details of the error indicator and state our main
results. In Section \ref{sec:numresults}, numerical results are presented.
The proofs of our main results are presented in Section \ref{sec:analysis}.
The paper ends with a Conclusion.

\section{Preliminaries}
\label{prelim}
In this paper, we consider high-contrast flow problems of the form
\begin{equation} \label{eq:original}
-\mbox{div} \big( \kappa(x) \, \nabla u  \big)=f \quad \text{in} \quad D,
\end{equation}
subject to the homogeneous Dirichlet boundary condition $u=0$ on $\partial D$,
where $D$ is the computational domain. 
We assume that $\kappa(x)$ is a heterogeneous coefficient
with multiple scales and very high contrast. 
To discretize (\ref{eq:original}), we 
introduce the notion of fine and coarse grids. 
We let $\mathcal{T}^H$ be a usual conforming partition of the computational domain $D$ into finite elements (triangles, quadrilaterals, tetrahedra, etc.). We refer to this partition as the coarse grid and assume that each coarse element is partitioned into a connected union of fine grid blocks. The fine grid partition will be denoted by $\mathcal{T}^h$, and is by definition a refinement of the coarse grid $\mathcal{T}^H$. 
We use $\{x_i\}_{i=1}^{N}$ (where $N$ denotes the number of coarse nodes) to denote the vertices of
the coarse mesh $\mathcal{T}^H$, and define the neighborhood of the node $x_i$ by
\begin{equation} \label{neighborhood}
\omega_i=\bigcup\{ K_j\in\mathcal{T}^H; ~~~ x_i\in \overline{K}_j\}.
\end{equation}
See Figure~\ref{schematic} for an illustration of neighborhoods and elements subordinated to the coarse discretization. We emphasize the use of $\omega_i$ to denote a coarse neighborhood, and $K$ to denote a coarse element throughout the paper.

\begin{figure}[htb]
  \centering
  \includegraphics[width=0.65 \textwidth]{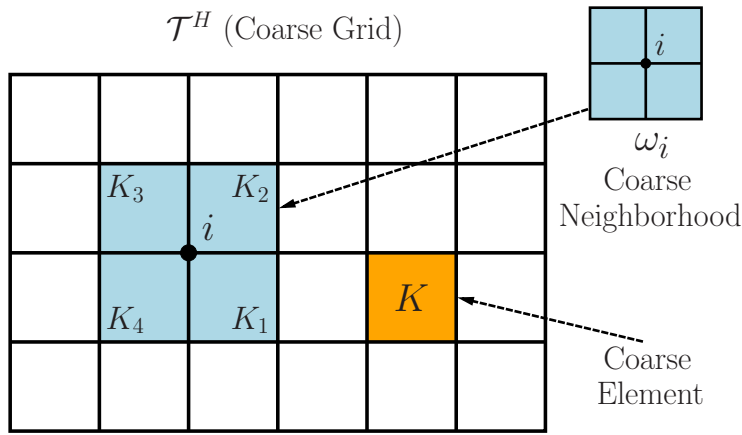}
  \caption{Illustration of a coarse neighborhood and coarse element}
  \label{schematic}
\end{figure}

Next, we briefly outline the GMsFEM.
We will consider the 
continuous Galerkin (CG) formulation and we will use $\omega_i$ as the support of basis functions. 
%
We 
denote the basis functions by $\psi_k^{\omega_i}$, which is supported in $\omega_i$.
In particular, we note that the proposed approach will employ the use of multiple basis functions per coarse neighborhood,
and the index $k$ represents the numbering of these basis functions.
In turn, the CG solution will be sought as $u_{\text{ms}}(x)=\sum_{i,k} c_{k}^i \psi_{k}^{\omega_i}(x)$.
Once the basis functions are identified, the CG global coupling is given through the variational form
\begin{equation}
\label{eq:globalG} a(u_{\text{ms}},v)=(f,v), \quad \text{for all} \, \, v\in
V_{\text{off}},
\end{equation}
where  $V_{\text{off}}$ is used to denote the space spanned by those basis functions
and $a(\cdot,\cdot)$ is a usual bilinear form corresponding to (\ref{eq:original}).
We also note that one can use 
discontinuous Galerkin formulation (see e.g., \cite{Wave,WaveGMsFEM,eglmsMSDG}) to couple multiscale basis functions
defined on $K$.

Let $V$ be the conforming finite element space
with respect to the fine-scale partition $\mathcal{T}^h$. 
We assume $u\in V$ is the fine-scale solution satisfying
\begin{equation*}
a(u,v) = (f,v), \quad v\in V.
\end{equation*}

\section{CG-based GMsFEM for high-contrast flow problems}
\label{cgdgmsfem}

In this section, we will give a brief description
of the GMsFEM for high contrast flow problems. 
More details can be found in \cite{egh12, eglp13}.
In the following, we also give a general outline of the GMsFEM. 

\begin{itemize}
\item[1.]  Offline computations:
\begin{itemize}
\item 1.0. Coarse grid generation.
\item 1.1. Construction of snapshot space that will be used to compute an offline space.
\item 1.2. Construction of a small dimensional offline space by performing dimension reduction in the space of global snapshots.
\end{itemize}
\item[2.] Online computations:
\begin{itemize}
\item 2.1. For each input parameter, compute multiscale basis functions.  (for parameter-dependent cases)
\item 2.2. Solution of a coarse-grid problem for any force term and boundary condition.
\item 2.3. Iterative solvers, if needed.
\end{itemize}
\end{itemize}

\subsection{Local basis functions}
\label{locbasis}

We now present the construction
of the basis functions 
and the corresponding spectral problems 
for obtaining a space reduction. 


In the offline computation, we first construct a snapshot space $V_{\text{snap}}^{\omega}$. 
The snapshot space can be the space of all fine-scale basis functions
or the solutions of some local problems with various choices of boundary conditions. 
For example, we can use the following $\kappa$-harmonic extensions to form a snapshot space.
 For each fine-grid function, $\delta_j^h(x)$,
which is defined by 
$\delta_j^h(x)=\delta_{j,k},\,\forall j,k\in \textsl{J}_{h}(\omega_i)$, where $\textsl{J}_{h}(\omega_i)$ denotes the fine-grid boundary node on $\partial\omega_i$. \\

Given a fine-scale piecewise linear function defined on $\partial\omega$, we define $\psi_{j}^{\omega_i, \text{snap}}$ by
\begin{equation} \label{harmonic_ex}
-\text{div}(\kappa(x) \nabla \psi_{j}^{\omega_i, \text{snap}} ) = 0
\quad \text{in} \, \, \, \omega_i,
\end{equation}
where $\psi_{j}^{\omega_i, \text{snap}}=\delta_j^h(x)$ on $\partial\omega_i$.\\
For brevity of notation we now omit the superscript $\omega_i$, yet it is assumed throughout this section that the offline space computations are localized to respective coarse subdomains. 
Let $W_i$ be the number of functions in the snapshot space in the region $\omega_i$, and
$$
V_{\text{snap}} = \text{span}\{ \psi_{j}^{ \text{snap}}:~~~ 1\leq l \leq W_i \},
$$
for each coarse subdomain $\omega_i$. 

Denote
$$
R_{\text{snap}} = \left[ \psi_{1}^{\text{snap}}, \ldots, \psi_{W_i}^{\text{snap}} \right].
$$

In order to construct the offline space $V_{\text{off}}^\omega$, we perform a dimension reduction of the space of snapshots using an auxiliary spectral decomposition. The analysis in \cite{egw10} motivates the following eigenvalue problem in the space of snapshots:
\begin{equation} \label{offeig}
A^{\text{off}} \Psi_k^{\text{off}} = \lambda_k^{\text{off}} S^{\text{off}} \Psi_k^{\text{off}},
\end{equation}
where
\begin{equation*}
 \displaystyle A^{\text{off}} = [a_{mn}^{\text{off}}] = \int_{\omega} \kappa(x) \nabla \psi_m^{\text{snap}} \cdot \nabla \psi_n^{\text{snap}} = R_{\text{snap}}^T A R_{\text{snap}}
 \end{equation*}
 \begin{center}
 and
 \end{center}
 \begin{equation*}
 \displaystyle S^{\text{off}} = [s_{mn}^{\text{off}}] = \int_\omega  \widetilde{\kappa}(x)\psi_m^{\text{snap}} \psi_n^{\text{snap}} = R_{\text{snap}}^T S R_{\text{snap}},
\end{equation*}
where $A$ and $S$ denote analogous fine scale matrices as defined by
\begin{equation*}
A_{ij} = \int_{D} \kappa(x) \nabla \phi_i \cdot \nabla \phi_j \, dx
\quad
S_{ij} = \int_{D} \widetilde{\kappa}(x)  \phi_i  \phi_j \, dx
\end{equation*} 
where $\phi_i$ is the fine-scale basis function. We will give the definition
of $\widetilde{\kappa}(x)$ later on.
To generate the offline space we then choose the smallest $M^{\omega}_{\text{off}}$ eigenvalues from Eq.~\eqref{offeig} and form the corresponding eigenvectors in the space of snapshots by setting
$\psi_k^{\text{off}} = \sum_j \Psi_{kj}^{\text{off}} \psi_j^{\text{snap}}$ (for $k=1,\ldots, M^{\omega}_{\text{off}}$), where $\Psi_{kj}^{\text{off}}$ are the coordinates of the vector $\Psi_{k}^{\text{off}}$. 

\subsection{Global coupling}
\label{globcoupling}

In this section we  create an appropriate solution space and variational formulation that for a continuous Galerkin approximation of Eq.~\eqref{eq:original}. We begin with an initial coarse space $V^{\text{init}}_0 = \text{span}\{ \chi_i \}_{i=1}^{N}$, where the $\chi_i$ are the standard multiscale partition of unity functions defined by
\begin{eqnarray} \label{pou}
-\text{div} \left( \kappa(x) \, \nabla \chi_i  \right) = 0 \quad K \in \omega_i \\
\chi_i = g_i \quad \text{on} \, \, \, \partial K, \nonumber
\end{eqnarray}
for all $K \in \omega_i$, where $g_i$ is assumed to be linear. We note that the summed, pointwise energy $\widetilde{\kappa}$ required for the eigenvalue problems will be defined as
\begin{equation*}
\widetilde{\kappa} = \kappa \sum_{i=1}^{N} H^2 | \nabla \chi_i |^2,
\end{equation*}
where $H$ denotes the coarse mesh size.

We then multiply the partition of unity functions by the eigenfunctions in the offline space $V_{\text{off}}^{\omega_i}$ to construct the resulting basis functions
\begin{equation} \label{cgbasis}
\psi_{i,k} = \chi_i \psi_k^{\omega_i, \text{off}} \quad \text{for} \, \, \,
1 \leq i \leq N \, \, \,  \text{and} \, \, \, 1 \leq k \leq M_{\text{off}}^{\omega_i},
\end{equation}
where $M_{\text{off}}^{\omega_i}$ denotes the number of offline eigenvectors that are chosen for each coarse node $i$. We note that the construction in Eq.~\eqref{cgbasis} yields  continuous basis functions due to the multiplication of offline eigenvectors with the initial (continuous) partition of unity. Next, we define the continuous Galerkin spectral multiscale space as
\begin{equation} \label{cgspace}
V_{\text{off}}  = \text{span} \{ \psi_{i,k} : \,  \, 1 \leq i \leq N \, \, \,  \text{and} \, \, \, 1 \leq k \leq M_{\text{off}}^{\omega_i}  \}.
\end{equation}
Using a single index notation, we may write $V_{\text{off}} = \text{span} \{ \psi_{i} \}_{i=1}^{N_c}$, where $N_c$ denotes the total number of basis functions that are used in the coarse space construction. We also construct an operator matrix $R_0^T = \left[ \psi_1 , \ldots, \psi_{N_c} \right]$ (where $\psi_i$ are used to denote the nodal values of each basis function defined on the fine grid), for later use in this subsection.

We seek $u_{\text{ms}}(x) = \sum_i c_i \psi_i(x) \in V_{\text{off}}$ such that
\begin{equation} \label{cgvarform}
a(u_{\text{ms}}, v) = (f, v) \quad \text{for all} \,\,\, v \in V_{\text{off}},
\end{equation}
where
$ \displaystyle a(u, v) = \int_D \kappa(x) \nabla u \cdot \nabla v \, dx$, and $ \displaystyle (f,v) = \int_D f v \, dx$. We note that variational form in \eqref{cgvarform} yields the following linear algebraic system
\begin{equation}
A_0 U_0 = F_0,
\end{equation}
where $U_0$ denotes the nodal values of the discrete CG solution, and
\begin{equation*}
A_0 = [a_{IJ}] = \int_D \kappa(x)  \, \nabla \psi_I \cdot \nabla \psi_J \, dx \quad \text{and} \quad F_0 = [f_I] = \int_D f \psi_I \, dx.
\end{equation*}
Using the operator matrix $R_0^T$, we may write $A_0 = R_0 A R_0^T$ and $F_0 = R_0 F$, where $A$ and $F$ are the standard, fine scale stiffness matrix and forcing vector corresponding to the form in Eq.~\eqref{cgvarform}. We also note that the operator matrix may be analogously used in order to project coarse scale solutions onto the fine grid.




\section{A-posteriori error estimate and adaptive enrichment}
\label{sec:errorindicator}

In this section, we will derive an a-posteriori error indicator
for the error $u-u_{\text{ms}}$ in energy norm.
We will then use the error indicator 
to develop an adaptive enrichment algorithm.
The a-posteriori error indicator
gives an estimate of the local error on the coarse grid regions $\omega_i$,
and we can then add basis functions to improve the solution. 
We will give two kinds of error indicators,
one is based on the $L^2$-norm of the local residual
and the other is based on the weighted
 $H^{-1}$-norm of the local residual (for simplicity, we will also call it
$H^{-1}$-norm based indicator). 
The $L^2$-norm residual is also used in the classical adaptive finite element method.
In our case, this type of error indicator works well when the coefficient 
does not contain high contrast region. 
We will provide a quantitative explanation for this in the next section. 
On the other hand, the $H^{-1}$-norm based residual 
gives a more robust error indicator
which works well for cases with high contrast media. 
This section is devoted to the derivation of the a-posteriori error indicator
and the corresponding adaptive enrichment algorithm.
The convergence analysis of the method will be given in the next section.

Let $V$ be the fine scale finite element space. We recall that the fine scale solution $u$ satisfies
\begin{equation}
a(u, v) = (f, v) \quad \text{for all} \,\,\, v \in V
\label{eq:fine}
\end{equation}
and the multiscale solution $u_{\text{ms}}$ satisfies
\begin{equation}
a(u_{\text{ms}}, v) = (f, v) \quad \text{for all} \,\,\, v \in V_{\text{off}}.
\label{eq:coarse}
\end{equation}
We remark that 
$V_\text{off} \subset V$. 
Next we will give the definitions of the $L^2$-based and $H^{-1}$-based residuals. 

{\bf $L^2$-based residual}:

Let $\omega_i$ be a coarse grid region. We define a linear functional $Q_i(v)$ on $L^2(\omega_i)$ by
\begin{equation}
Q_i(v) =  \int_{\omega_i} fv\chi_i - \int_{\omega_i} a\nabla u_{\text{ms}}\cdot \nabla (v\chi_i).
\end{equation}
The norm of $Q_i$ is defined as
\begin{equation}
\| Q_i \| = \sup_{v\in L^2(\omega_i)} \frac{ |Q_i(v)| }{\| v\|_{L^2(\omega_i)}}.
\end{equation}
The norm $\|Q_i\|$ gives an estimate on the size of error. 

{\bf $H^{-1}$-based residual}:

Let $\omega_i$ be a coarse grid region and let $V_i = H^1_0(\omega_i)$.
We define a linear functional $R_i(v)$ on $V_i$ by
\begin{equation}
R_i(v) =  \int_{\omega_i} fv - \int_{\omega_i} a\nabla u_{\text{ms}}\cdot \nabla v.
\end{equation}
The norm of $R_i$ is defined as
\begin{equation}
\| R_i \|_{V_i^*} = \sup_{v\in V_i} \frac{ |R_i(v)| }{\| v\|_{V_i}}.
\end{equation}
where $\|v\|_{V_i} = a(v,v)^{\frac{1}{2}}$. 
The norm $\|R_i\|_{V_i^*}$ gives an estimate on the size of error. 

To simplify notations, we let $l_i = M_{\text{off}}^{\omega_i}$. 
We recall that, for each $\omega_i$, 
the eigenfunctions corresponding to $\lambda_1^{\omega_i}, \cdots, \lambda_{l_i}^{\omega_i}$
are used in the construction of $V_{\text{off}}$. 
We also define $\widetilde{\kappa}_i = \min_{x\in \omega_i} \widetilde{\kappa}(x)$. 

In the next section, we will prove the following theorem.
\begin{theorem}
Let $u$ and $u_{\text{ms}}$ be the solutions of \eqref{eq:fine} and \eqref{eq:coarse} respectively. Then
\begin{eqnarray}
\| u-u_{\text{ms}}\|_V^2 &\leq& C_{\text{err}}\sum_{i=1}^N \|Q_i\|^2  ( \widetilde{\kappa}_i \lambda^{\omega_i}_{l_i+1})^{-1}, \label{eq:res1} \\
\| u-u_{\text{ms}}\|_V^2 &\leq& C_{\text{err}}  \sum_{i=1}^N \|R_i\|^2_{V_i^*}  (\lambda^{\omega_i}_{l_i+1})^{-1}. \label{eq:res2}
\end{eqnarray}
\label{thm:post}
where $C_{\text{err}}$ is a uniform constant. 
\end{theorem}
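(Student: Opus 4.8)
The plan is to combine Galerkin orthogonality, the partition of unity, and the spectral estimate that underlies the offline construction. Write $e = u - u_{\text{ms}}$. Since $V_{\text{off}}\subset V$, subtracting \eqref{eq:coarse} from \eqref{eq:fine} gives the Galerkin orthogonality $a(e,v)=0$ for all $v\in V_{\text{off}}$, so that $\|e\|_V^2 = a(e,e) = (f,e)-a(u_{\text{ms}},e)$. Inserting $\sum_i\chi_i = 1$ and using that each $\chi_i e$ is supported in $\omega_i$ and vanishes on $\partial\omega_i$ (hence lies in $V_i=H^1_0(\omega_i)$), I would localize this identity in two equivalent ways: since $\chi_i$ sits inside the definition of $Q_i$, one has $\|e\|_V^2=\sum_{i=1}^N Q_i(e)$, and testing $R_i$ against $\chi_i e$ gives $\|e\|_V^2=\sum_{i=1}^N R_i(\chi_i e)$.

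The key mechanism is to subtract the spectral projection. Because $\chi_i\psi_k^{\omega_i,\text{off}}\in V_{\text{off}}$ for $k\le l_i$, Galerkin orthogonality forces $Q_i(\psi_k^{\omega_i,\text{off}})=0$ and $R_i(\chi_i\psi_k^{\omega_i,\text{off}})=0$, i.e.\ both residuals annihilate the first $l_i$ offline eigenfunctions. Writing $\pi_i$ for the projection onto $\mathrm{span}\{\psi_1^{\omega_i,\text{off}},\dots,\psi_{l_i}^{\omega_i,\text{off}}\}$ and $w_i:=e-\pi_i e$, I may replace $e$ by $w_i$ in each local term. Since the offline eigenvectors are simultaneously orthogonal in the $A$- and $S$-inner products of \eqref{offeig}, $\pi_i$ is orthogonal in both, whence $\int_{\omega_i}\kappa|\nabla w_i|^2\le\int_{\omega_i}\kappa|\nabla e|^2$, and the convergence of the spectral interpolation yields the crucial bound $\int_{\omega_i}\widetilde{\kappa}\,w_i^2\le(\lambda^{\omega_i}_{l_i+1})^{-1}\int_{\omega_i}\kappa|\nabla w_i|^2$.

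For the $L^2$ indicator the estimate then closes cleanly: $Q_i(e)=Q_i(w_i)\le\|Q_i\|\,\|w_i\|_{L^2(\omega_i)}$, and $\|w_i\|_{L^2(\omega_i)}^2\le\widetilde{\kappa}_i^{-1}\int_{\omega_i}\widetilde{\kappa}\,w_i^2\le(\widetilde{\kappa}_i\lambda^{\omega_i}_{l_i+1})^{-1}\int_{\omega_i}\kappa|\nabla e|^2$. Summing over $i$, applying Cauchy--Schwarz, and using that the coarse neighborhoods have bounded overlap (so $\sum_i\int_{\omega_i}\kappa|\nabla e|^2\le C\|e\|_V^2$) gives \eqref{eq:res1}; note that the factor $\widetilde{\kappa}_i^{-1}=(\min_{\omega_i}\widetilde{\kappa})^{-1}$ is large where the medium is weak, which is precisely the source of non-robustness for high contrast. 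For the $H^{-1}$ indicator I would bound $R_i(\chi_i e)=R_i(\chi_i w_i)\le\|R_i\|_{V_i^*}\|\chi_i w_i\|_{V_i}$ and expand $\nabla(\chi_i w_i)=\chi_i\nabla w_i+w_i\nabla\chi_i$. The term $\int_{\omega_i}\kappa|\nabla\chi_i|^2 w_i^2$ is controlled by the design of the weight, $\kappa|\nabla\chi_i|^2\le H^{-2}\widetilde{\kappa}$, together with the spectral estimate, producing the factor $(\lambda^{\omega_i}_{l_i+1})^{-1}$.

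The main obstacle is the remaining term $\int_{\omega_i}\kappa\chi_i^2|\nabla w_i|^2$: the naive bound $\chi_i\le 1$ only returns $\int_{\omega_i}\kappa|\nabla e|^2$ and loses the spectral gain, which is exactly why a plain residual bound fails to be robust. To recover the $(\lambda^{\omega_i}_{l_i+1})^{-1}$ factor I would integrate by parts, using that $\pi_i e$, being a combination of the $\kappa$-harmonic offline eigenfunctions built from \eqref{harmonic_ex}, is $\kappa$-harmonic; the interior divergence term then drops and $\int_{\omega_i}\kappa\chi_i^2|\nabla w_i|^2$ is re-expressed through $\int_{\omega_i}\chi_i w_i\,\kappa\nabla w_i\cdot\nabla\chi_i$, after which a Cauchy--Schwarz/Young absorption bounds it by a multiple of $\int_{\omega_i}\kappa|\nabla\chi_i|^2 w_i^2$, and hence again by $(\lambda^{\omega_i}_{l_i+1})^{-1}\int_{\omega_i}\kappa|\nabla e|^2$. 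This is the step where the $\kappa$-harmonic structure of the snapshot/offline space is indispensable; the mismatch between $e$ and the harmonic space is the irreducible snapshot error deliberately set aside in the analysis. Collecting both terms yields $\|\chi_i w_i\|_{V_i}^2\le C(\lambda^{\omega_i}_{l_i+1})^{-1}\int_{\omega_i}\kappa|\nabla e|^2$, and a final Cauchy--Schwarz over $i$ together with the bounded-overlap property gives \eqref{eq:res2}; the $H^2$ normalization built into $\widetilde{\kappa}$ is what keeps $C_{\text{err}}$ independent of the contrast.
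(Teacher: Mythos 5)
Your proposal follows the same skeleton as the paper's proof: Galerkin orthogonality, localization by the partition of unity, subtraction of the local spectral projection (your $\pi_i$, the paper's $P_i$), then Cauchy--Schwarz together with the finite-overlap bound $\sum_i\|u-u_{\text{ms}}\|_{V_i}^2\le C\|u-u_{\text{ms}}\|_V^2$. Your argument for \eqref{eq:res1} is correct and essentially identical to the paper's. Your reduction of \eqref{eq:res2} to the single estimate $\|\chi_i(e-\pi_i e)\|_{V_i}\le C(\lambda^{\omega_i}_{l_i+1})^{-1/2}\|e\|_{V_i}$, with $e=u-u_{\text{ms}}$, also matches the paper. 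The difference is that the paper does not prove this estimate at all: it invokes it as a previously developed spectral result, namely \eqref{eq:eigenbound} with a uniform constant $C_{\text{conv}}^{\omega_i}$, whereas you attempt to derive it, and the derivation fails.

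The failure is in the Caccioppoli-type step. That argument requires the function whose weighted energy is estimated, $w_i=e-\pi_ie$, to be $\kappa$-harmonic in $\omega_i$, not merely $\pi_ie$. Writing
\begin{equation*}
\int_{\omega_i}\kappa\chi_i^2|\nabla w_i|^2=\int_{\omega_i}\kappa\nabla w_i\cdot\nabla(\chi_i^2w_i)-2\int_{\omega_i}\kappa\chi_iw_i\,\nabla w_i\cdot\nabla\chi_i,
\end{equation*}
harmonicity of $\pi_ie$ kills only the $\pi_ie$ part of the first integral; what remains is $\int_{\omega_i}\kappa\nabla e\cdot\nabla(\chi_i^2w_i)=R_i(\chi_i^2w_i)$, which is \emph{not} zero, since $-\mbox{div}(\kappa\nabla e)=f+\mbox{div}(\kappa\nabla u_{\text{ms}})$ and $u_{\text{ms}}$, being a sum of products $\chi_j\psi_k^{\text{off}}$, is not $\kappa$-harmonic either. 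So "the interior divergence term drops" is false. If you keep this term and close with Young's inequality, the best you get is $\|\chi_iw_i\|_{V_i}^2\lesssim \|R_i\|_{V_i^*}^2+H^{-2}(\lambda^{\omega_i}_{l_i+1})^{-1}\|e\|_{V_i}^2$, and hence $\|e\|_V^2\lesssim\sum_i\|R_i\|_{V_i^*}^2\bigl(1+H^{-2}(\lambda^{\omega_i}_{l_i+1})^{-1}\bigr)$; the leading term is missing the factor $(\lambda^{\omega_i}_{l_i+1})^{-1}$, so \eqref{eq:res2} does not follow. (Your appeal to harmonicity also silently restricts to the harmonic snapshot space; for the spectral/unit-vector snapshots used in Section \ref{sec:622} even $\pi_ie$ is not $\kappa$-harmonic.) The repair is not a sharper absorption but what the paper does: invoke the interpolation estimate \eqref{eq:eigenbound}, which packages precisely the nontrivial fact that $\int_{\omega_i}\kappa\chi_i^2|\nabla(v-P_iv)|^2$, and not only the $\kappa|\nabla\chi_i|^2$ term, decays like $(\lambda^{\omega_i}_{l_i+1})^{-1}\|v\|_{V_i}^2$.
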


From \eqref{eq:res1} and \eqref{eq:res2}, we see that 
the norms $\|Q_i\|$ and $\|R_i\|_{V_i^*}$
give indications on the size of the energy norm error $\| u-u_{\text{ms}}\|_V$. 
Even though \eqref{eq:res1} and \eqref{eq:res2}
have the same form, we emphasize that
they give different convergence behavior
in the high contrast case.

We will now present the adaptive enrichment algorithm. 
We use $m \geq 1$ to represent the enrichment level
and $V_{\text{off}}^m$ be the solution space at level $m$. 
For each coarse region, we use $l_i^m$
be the number of eigenfunctions used at the enrichment level $m$
for the coarse region $\omega_i$.

{\bf Adaptive enrichment algorithm}: Choose $0 < \theta < 1$. 
For each $m=1,2, \cdots$, 

\begin{enumerate}
\item[Step 1:] Find the solution in the current space. That is, 
find $u_{\text{ms}}^m \in V^m_{\text{off}}$ such that 
\begin{equation}
a(u^m_{\text{ms}}, v) = (f, v) \quad \text{for all} \,\,\, v \in V^m_{\text{off}}.
\label{eq:solve}
\end{equation}

\item[Step 2:] Compute the local residual. For each coarse region $\omega_i$, we compute
\begin{equation*}
\eta^2_i =
\begin{cases} 
& \|Q_i\|^2  (\widetilde{\kappa}_i \lambda^{\omega_i}_{l^m_i+1})^{-1},\quad \text{ for } L^2\text{-based residual} \\
& \|R_i\|^2_{V_i^*}  (\lambda^{\omega_i}_{l^m_i+1})^{-1},\quad \text{ for } H^{-1}\text{-based residual}
\end{cases}
\end{equation*}
And we re-enumerate them in the decreasing order, that is, $\eta^2_1 \geq \eta^2_2 \geq \cdots \geq \eta^2_N$. 

\item[Step 3:] Find the coarse region where enrichment is needed. We choose the smallest integer $k$ such that
\begin{equation}
\theta \sum_{i=1}^N \eta_i^2 \leq \sum_{i=1}^k \eta_i^2. 
\label{eq:criteria}
\end{equation}

\item[Step 4:] Enrich the space. For each $i=1,2,\cdots, k$, we add basis function
for the region $\omega_i$ according to the following rule.
Let $s$ be the smallest positive integer such that 
$\lambda_{l_i^m+s+1}$ is large enough (see the proof of Theorem \ref{thm:conv}) compared with $\lambda_{l_i^m+1}$. 
Then
we include
the eigenfunctions $\Psi^{\text{off}}_{l_i^m+1}, \cdots, \Psi^{\text{off}}_{l_i^m+s}$
in the construction of the basis functions. 
The resulting space is denoted as $V_{\text{off}}^{m+1}$. 

\end{enumerate}

We remark that the choice of $s$ above will ensure the convergence of the enrichment algorithm,
and in practice, the value of $s$ is easy to obtain. 
Moreover, contrary to classical adaptive refinement methods, 
the total number of basis functions that we can add 
is bounded by the dimension of the snapshot space. 
Thus, the condition \eqref{eq:criteria} can be modified as follows. 
We choose the smallest integer $k$ such that 
\begin{equation*}
\theta \sum_{i=1}^N \eta_i^2 \leq \sum_{i\in I} \eta_i^2
\end{equation*}
where the index set $I$ is a subset of $\{ 1,2, \cdots, k\}$
and contains indices $j$ such that $l_j^m$
is less than the maximum number of eigenfunctions for the region $\omega_j$.

We now describe how the norms $\|Q_i\|$ and $\|R_i\|_{V_i^*}$ are computed. 
Let $W_i$ be the diagonal matrix containing the nodal values of the fine grid cut-off function $\chi_i$ in the diagonal. Then
the norm $\|Q_i\|$ can be computed as
\begin{equation}
\label{eq:normQ}
\|Q_i\| = \| W_i AR_0^T U_0 \|. 
\end{equation}
According to the Riez representation theorem, the norm $\|R_i\|_{V_i^*}$ can be computed as follows.
Let $z_i$ be the solution of 
\begin{equation}\label{eq:loc_Dirichlet}
\int_{\omega_i} a\nabla z_i \cdot \nabla v = R_i(v), \quad \text{for all} \,\,\, v \in V_i.
\end{equation}
Then we have $\|R_i\|_{V_i^*} = \| z_i \|_{V_i}$.
Thus, to find the norm $\|R_i\|_{V_i^*}$,
we need to solve a local problem on each coarse region $\omega_i$.

Finally, we state the convergence theorem.
\begin{theorem}
There are positive constants $\tau, \delta, \rho, L_1$ and $L_2$ such that the following contracting property holds
\begin{equation*}
\| u-u_{\text{ms}}^{m+1}\|_V^2 + \frac{\tau}{1+\tau \delta L_2} \sum_{i=1}^N S_{m+1}(\omega_i)^2
\leq \varepsilon \Big( \|u-u_{\text{ms}}^m\|_V^2 + \frac{\tau}{1+\tau\delta  L_2}  \sum_{i=1}^N S_{m}(\omega_i)^2 \Big).
\end{equation*}
\label{thm:conv}
Note that $0 < \varepsilon < 1$ and 
\begin{equation*}
\varepsilon = \max( 1- \frac{\theta^2}{L_1(1+\tau \delta L_2)}, \frac{2C_{\text{err}}}{\tau L_1}+\rho).
\end{equation*}
\end{theorem}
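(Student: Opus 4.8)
The plan is to adapt the contraction argument for adaptive finite element methods \cite{AdaptiveFEM}, replacing mesh refinement with spectral enrichment. Write $e_m = u - u_{\text{ms}}^m$ and note the nesting $V_{\text{off}}^m \subset V_{\text{off}}^{m+1} \subset V$, which holds because Step 4 only adjoins eigenfunctions to the existing space. The first ingredient is the Pythagoras identity coming from Galerkin orthogonality: since $u_{\text{ms}}^{m+1}-u_{\text{ms}}^m \in V_{\text{off}}^{m+1}$ and $a(e_{m+1},v)=0$ for every $v \in V_{\text{off}}^{m+1}$, one has
\[
\|e_m\|_V^2 = \|e_{m+1}\|_V^2 + \|u_{\text{ms}}^{m+1}-u_{\text{ms}}^m\|_V^2 .
\]
This localizes the whole argument onto controlling the increment $\|u_{\text{ms}}^{m+1}-u_{\text{ms}}^m\|_V^2$ and the evolution of the local indicators $S_m(\omega_i)$.

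Next I would assemble the two auxiliary estimates that mirror classical adaptivity. The global upper bound is already supplied by Theorem \ref{thm:post}, giving $\|e_m\|_V^2 \le L_1 \sum_i S_m(\omega_i)^2$ with $L_1$ proportional to $C_{\text{err}}$. The second estimate is an estimator-stability bound: the residual functionals $Q_i$ and $R_i$ depend continuously and linearly on $u_{\text{ms}}$, so their variation between levels is controlled by the increment, producing a reduction bound of the form
\[
\sum_i S_{m+1}(\omega_i)^2 \le \rho \sum_i S_m(\omega_i)^2 + L_2\,\|u_{\text{ms}}^{m+1}-u_{\text{ms}}^m\|_V^2 ,
\]
where $\rho<1$ is the genuine reduction produced by the enrichment. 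This reduction is the crux: on a marked region the added eigenfunctions $\Psi^{\text{off}}_{l_i^m+1},\dots,\Psi^{\text{off}}_{l_i^m+s}$ shrink the local indicator because each indicator carries the factor $(\lambda^{\omega_i}_{l_i^m+1})^{-1}$, and the choice of $s$ in Step 4 forces $\lambda^{\omega_i}_{l_i^m+s+1}$ to be large compared with $\lambda^{\omega_i}_{l_i^m+1}$.

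I would then combine these with the Dörfler-type marking \eqref{eq:criteria} through the weighted quantity $\|e_m\|_V^2 + \beta \sum_i S_m(\omega_i)^2$ with $\beta = \tau/(1+\tau\delta L_2)$, where $\delta$ is the Young parameter used to absorb the increment against the $-\|u_{\text{ms}}^{m+1}-u_{\text{ms}}^m\|_V^2$ term from Pythagoras. The marking guarantees that the reduced indicators account for at least a fraction $\theta$ of the total, which after inserting the upper bound of Theorem \ref{thm:post} converts into a reduction of $\|e_m\|_V^2$ by a multiple of $\theta^2/L_1$; this yields the first argument $1-\theta^2/(L_1(1+\tau\delta L_2))$ of the maximum. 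Feeding the increment back through the Pythagoras identity and the upper bound, and collecting the estimator reduction $\rho$ together with the cross term arising from the perturbation $L_2\|u_{\text{ms}}^{m+1}-u_{\text{ms}}^m\|_V^2$, produces the second argument $2C_{\text{err}}/(\tau L_1)+\rho$. Taking $\tau$ large enough that $2C_{\text{err}}/(\tau L_1) < 1-\rho$ keeps both arguments strictly below one, so $\varepsilon<1$.

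The step I expect to be the main obstacle is the reduction factor $\rho<1$ in the estimator-stability bound. Unlike classical adaptivity, no local bubble-function lower bound is available; the decrease of the local indicator after enrichment must instead be extracted from the spectral gap, using the stability of the coarse-grid projection and the convergence of the spectral interpolation onto the enriched eigenspace. Making $\rho$ explicit in terms of the eigenvalue ratio $\lambda^{\omega_i}_{l_i^m+1}/\lambda^{\omega_i}_{l_i^m+s+1}$, and verifying that it holds uniformly over all coarse regions and enrichment levels, is precisely what the condition on $s$ in Step 4 is designed to secure, and checking it rigorously is the delicate heart of the proof.
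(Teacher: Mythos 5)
Your top-level skeleton matches the paper's proof closely: the Galerkin-orthogonality (Pythagoras) identity, D\"orfler marking, the reliability bound of Theorem \ref{thm:post}, a reduction/recursion property for the local quantities, and the final combination with the weight $\tau/(1+\tau\delta L_2)$ and the choice of $\tau$ so that $2C_{\text{err}}/(\tau L_1)+\rho<1$. The genuine gap is that you have identified $S_m(\omega_i)$ with the error indicator used for marking, while in the paper these are two different objects. The marking quantity is $\eta_i^2$ (i.e.\ $\|R_i\|^2_{V_i^*}(\lambda^{\omega_i}_{l_i^m+1})^{-1}$ or its $L^2$ analogue), whereas $S_m(\omega_i)$ is a correction term defined in \eqref{eq:defSR} by
\begin{equation*}
S_m(\omega_i) = (\lambda_{l^m_i+1}^{\omega_i})^{-\frac{1}{2}} \sup_{v\in V_i} \frac{ |R_i(v - (P_i^{m+1} v)\chi_i) |}{\|v\|_{V_i}},
\end{equation*}
that is, the part of the residual that not even the level-$(m+1)$ space can resolve; it plays the role that data oscillation plays in classical AFEM. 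Because of this conflation, your first ingredient is not available as stated: Theorem \ref{thm:post} does not give $\|u-u_{\text{ms}}^m\|_V^2 \le L_1\sum_i S_m(\omega_i)^2$. The paper only obtains $\theta^2\|u-u_{\text{ms}}^m\|_V^2 \le 2C_{\text{err}}\sum_i S_m(\omega_i)^2 + L_1\|u_{\text{ms}}^{m+1}-u_{\text{ms}}^m\|_V^2$, and this step requires the marking together with Lemma \ref{lem:R1} (resp.\ Lemma \ref{lem:Q1}), which bounds each marked $\eta_i^2$ by the increment plus $S_m(\omega_i)^2$ via the stability estimate \eqref{eq:eigenstab} of the spectral projection $\chi_i P_i^{m+1}$. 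This ``lower bound up to correction'' is one of the two key lemmas, and your proposal never formulates it.

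Second, your claimed global reduction $\sum_i S_{m+1}(\omega_i)^2 \le \rho\sum_i S_m(\omega_i)^2 + L_2\|u_{\text{ms}}^{m+1}-u_{\text{ms}}^m\|_V^2$ with $\rho<1$ cannot be extracted from the spectral gap alone: on unmarked regions nothing is enriched, so $S_{m+1}(\omega_i)=S_m(\omega_i)$ there, and the D\"orfler criterion \eqref{eq:criteria} is imposed on the $\eta_i$, \emph{not} on the $S_m(\omega_i)$, so it gives no guarantee that the enriched regions carry a fixed fraction of $\sum_i S_m(\omega_i)^2$. The paper closes exactly this hole with an additional explicit hypothesis: there exists $\gamma\in(0,1)$ with $\gamma^2\sum_i S_m(\omega_i)^2 \le \sum_{i\in I} S_m(\omega_i)^2$; combined with the local recursion of Lemma \ref{lem:recurR} (resp.\ Lemma \ref{lem:recurQ}) and the eigenvalue-ratio condition $\delta<1$ secured by Step 4, this yields $\rho=(1+\alpha_S)\big(1-(1-\delta)\gamma^2\big)<1$. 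Your instinct that the spectral gap (rather than bubble functions) drives the reduction is correct, and your sketch would essentially prove a CKNS-type contraction for ``error plus estimator'' --- a legitimate but different statement; for the theorem as stated, the missing pieces are the two lemma families relating $\eta_i$, $S_m(\omega_i)$ and the increment, and the $\gamma$-assumption that links the marking to the correction terms.
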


We remark that the precise definitions of the constants 
$\tau, \delta, \rho, L_1$ and $L_2$
are given in Section \ref{sec:analysis}.

\section{Numerical Results}
\label{sec:numresults}
%
In this section, we will present some numerical experiments to show
the performance of the error indicators and the adaptive enrichment algorithm. 
We take the domain $\Omega$ as a square,
set the forcing term $f=1$ and use a linear boundary condition for the problem \eqref{eq:original}.
In our numerical simulations, we use a $20 \times 20$ coarse grid,
and each coarse grid block is divided into $5\times 5$ fine grid blocks.
Thus, the whole computational domain is partitioned by a $100 \times 100$ fine grid.
We assume that the fine-scale solution is obtained
by discretizing \eqref{eq:original} by the classical conforming piecewise bilinear elements on the fine grid.
To test the performance of our algorithm, we consider two 
permeability fields $\kappa$ as depicted in Figure \ref{fig:perms}. 
We obtain similar numerical results for these cases, and therefore
we will only demonstrate the numerical results for the first permeability field (Figure \ref{fig:perm_HCC}).

Below, we list the indicators used in our simulations. 
In particular, we will recall the definitions of the $L^2$-based and $H^{-1}$-based error indicators.
For comparison purpose, we also use an indicator computed by the exact error in energy norm.
We remark that the indicators are computed for each coarse neighborhood $\omega_i$ and are defined as follows.
\begin{itemize}
\item
The indicator constructed using the weighted $H^{-1}$-based residual is
\begin{equation}\label{eq:indicator_Numerical}
\eta^{\mbox{\scriptsize{En}}}_{\omega_i}= \|R_i\|^2_{V_i^*} (\lambda_{l^m_i+1}^{\omega_i})^{-1} 
\end{equation}
and we name it the proposed indicator.
\item
The indicator constructed using the $L^{2}$-based residual is
\begin{equation}\label{eq:indicator_Numerical_l2}
\eta^{\mbox{\scriptsize{L2}}}_{\omega_i}= \|Q_i\|^2  (\widetilde{\kappa}_i \lambda^{\omega_i}_{l^m_i+1})^{-1}
\end{equation}
and we name it the $L^2$ indicator.
\item
The indicator constructed using the exact energy error is
%
\begin{align}\label{eq:indicator_cmp}
\eta^{\mbox{\scriptsize{Ex}}}_{\omega_i}=\norm{u-u_{\text{ms}}}_{V_i}^{2}
\end{align}
and name it the exact indicator.
\end{itemize}

We recall that, in the above definitions,  the norms $\|Q_i\|$ and $\|R_i\|_{V_i^*}$
are computed in the way described in \eqref{eq:normQ} and \eqref{eq:loc_Dirichlet} respectively. 
For each enrichment level, we will 
compute the multiscale solution (Step 1) and the corresponding error indicators (Step 2). 
The indicators 
$\eta^{\mbox{\scriptsize{Ex}}}_{\omega_i}$, $\eta^{\mbox{\scriptsize{En}}}_{\omega_i}$ and $\eta^{\mbox{\scriptsize{L2}}}_{\omega_i}$ 
are then ordered in decreasing order. 
To enrich the approximation space, 
we select a few coarse neighborhoods such that (\ref{eq:criteria}) holds for a specific value of $\theta$ (Step 3).
In our simulations, we consider $\theta=0.7$ and $0.2$. 
Finally, for selected coarse neighborhoods, we will enrich the offline space
by adding more basis functions (Step 4).


%
%
\begin{figure}[htb]
\centering
 \subfigure[Permeability field 1]{\label{fig:perm_HCC}
    \includegraphics[width = 0.40\textwidth, keepaspectratio = true]{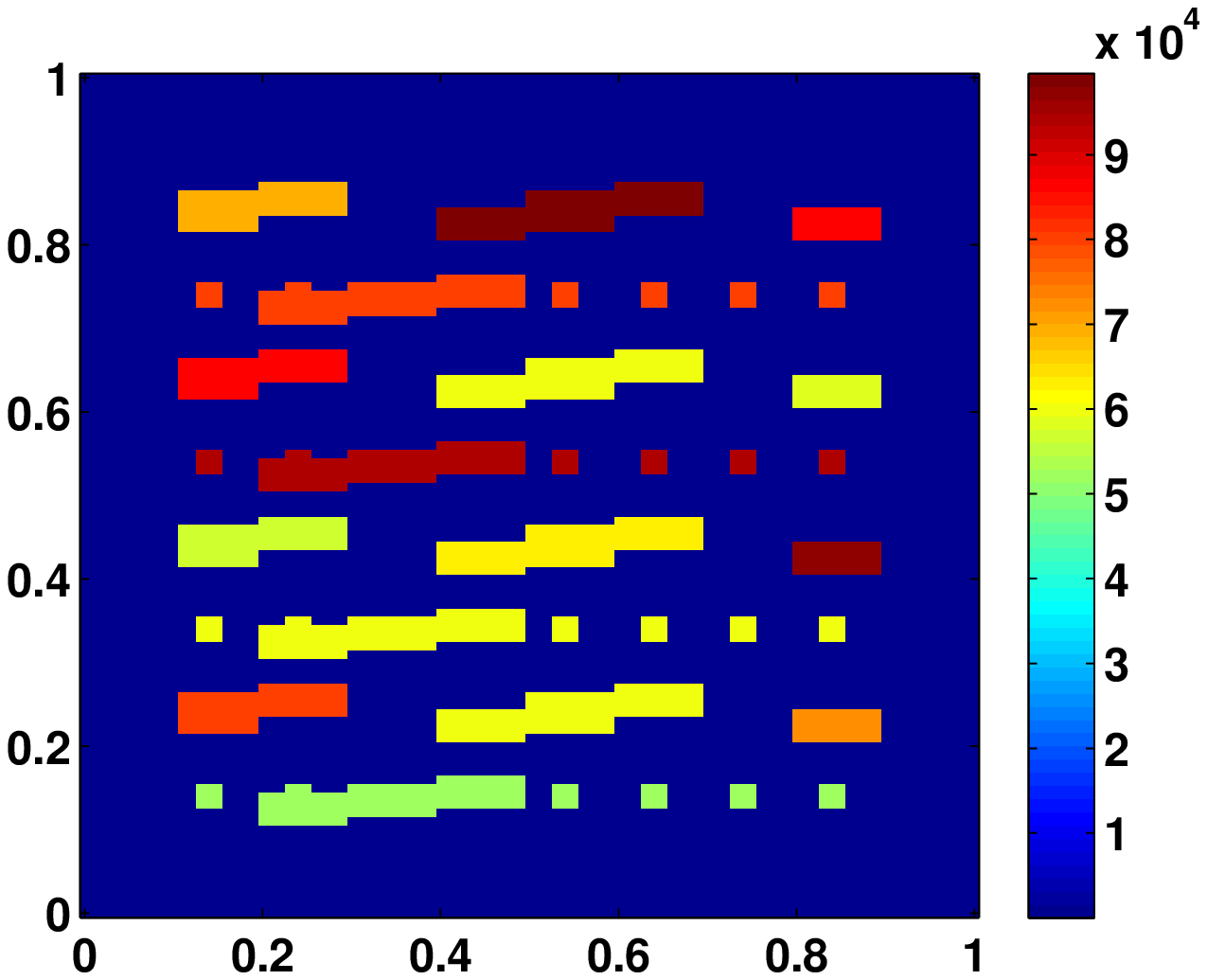}
   }
  \subfigure[Permeability field 2]{\label{fig:newperm2_cross2}
    \includegraphics[width = 0.40\textwidth, keepaspectratio = true]{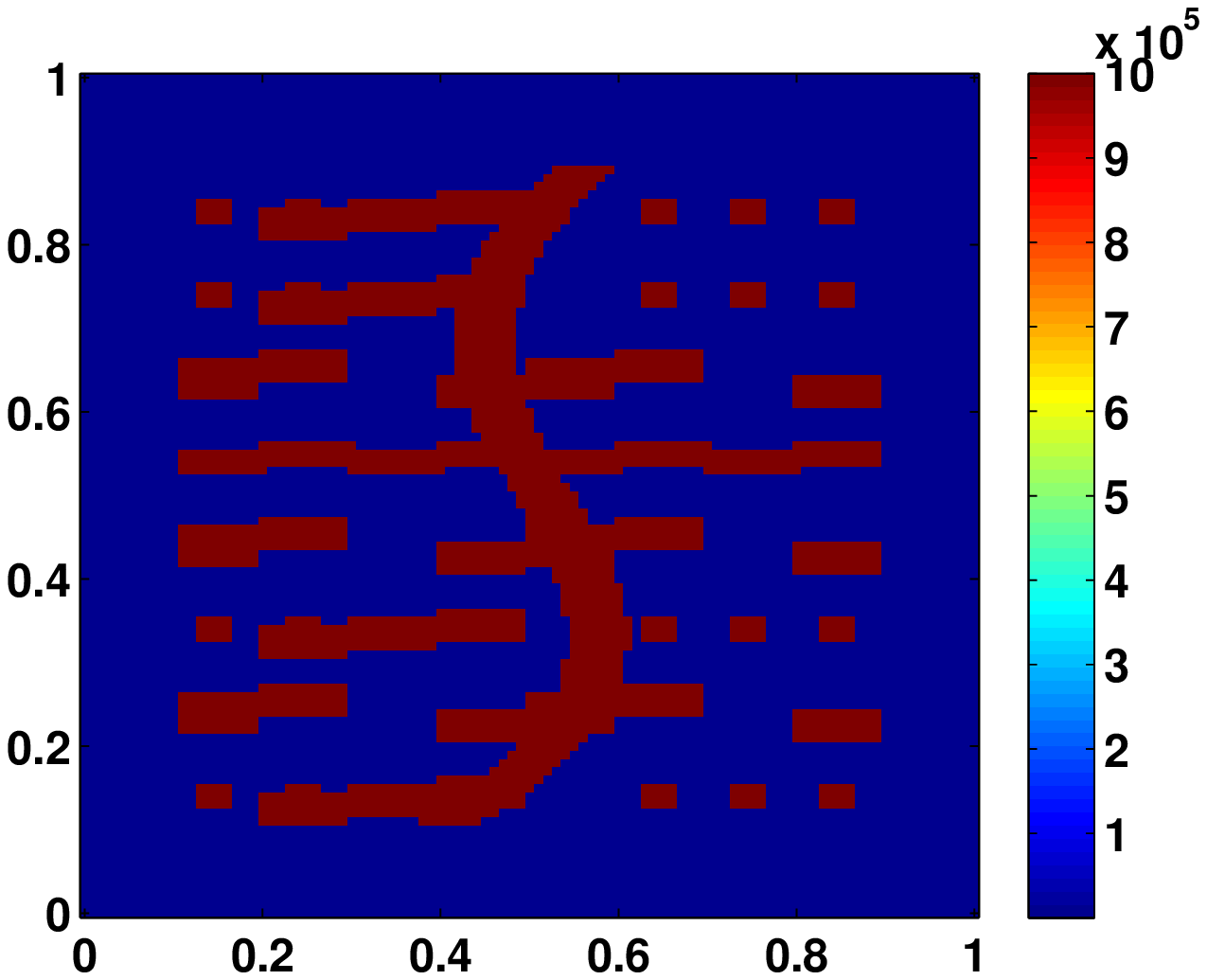}
   }
 \caption{Permeability fields}
 \label{fig:perms}
\end{figure}
%
%

We will consider two types of snapshot spaces,
namely the space spanned by all $\kappa$-harmonic extensions and
the space spanned by all fine-scale conforming piecewise bilinear functions. 
The sequence of offline basis functions is then obtained by
solving the local spectral problem \eqref{offeig}
on the space of snapshots. 
We will call the first type of basis functions as harmonic basis 
and the second type of basis functions as spectral basis. 
In addition, we use the notations $\eta^{\mbox{\scriptsize{H,En}}}_{\omega_i}$, $\eta^{\mbox{\scriptsize{H,L2}}}_{\omega_i}$
and $\eta^{\mbox{\scriptsize{H,Ex}}}_{\omega_i}$
to denote the $H^{-1}$-based, $L^2$-based and exact error indicators
for the case when the offline space is formed by harmonic basis. 
Similarly, we use the notations $\eta^{\mbox{\scriptsize{U,En}}}_{\omega_i}$, $\eta^{\mbox{\scriptsize{U,L2}}}_{\omega_i}$
and $\eta^{\mbox{\scriptsize{U,Ex}}}_{\omega_i}$
to denote the $H^{-1}$-based, $L^2$-based and exact error indicators
for the case when the offline space is formed by spectral basis
(here, superscript $U$ stands for the fact that the snapshot space 
consists of all fine-grid {\it unit} vectors).


%
In the following, we summarize the numerical examples we considered in this paper.
\begin{itemize}
\item {\bf Numerical results with harmonic basis (see Section \ref{sec:621}).}
We will present
 numerical results to test the performance of the error indicator
 $\eta^{\mbox{\scriptsize{H, En}}}_{\omega_i}$ and the adaptive enrichment algorithm 
 with $\theta=0.7$ and $\theta=0.2$. We also compare our results with the use of $\eta^{\mbox{\scriptsize{H,Ex}}}_{\omega_i}$ 
with $\theta=0.7$.
\item {\bf Numerical results with spectral basis (see Section \ref{sec:622}).}
We will present
 numerical results to test the performance of the error indicator
 $\eta^{\mbox{\scriptsize{U, En}}}_{\omega_i}$ and the adaptive enrichment algorithm
 with $\theta=0.7$ and $\theta=0.2$. We also compare our results with the use of $\eta^{\mbox{\scriptsize{U,Ex}}}_{\omega_i}$ 
with $\theta=0.7$.
\item {\bf Numerical results with $L^2$ indicator (see Section \ref{sec:623}).}
We will present
 numerical results to test the performance of the error indicator
  $\eta^{\mbox{\scriptsize{H,L2}}}_{\omega_i}$ and the adaptive enrichment algorithm with $\theta=0.7$.
\item {\bf Numerical results when the proposed indicator is computed in the snapshot space (see Section \ref{sec:624}).}
We will present
 numerical results to test the performance of the error indicator
$\eta^{\mbox{\scriptsize{H,En}}}_{\omega_i}$ and the adaptive enrichment algorithm with $\theta=0.7$.
In this case, the norm $\|R_i\|_{V_i^*}$ is computed in the snapshot space instead of the fine-grid space.
\end{itemize}

In the following,
we will give a brief summary of our conclusions before discussing the numerical results.
\begin{itemize}
\item The use of both $\eta^{\mbox{\scriptsize{H, En}}}_{\omega_i}$
and $\eta^{\mbox{\scriptsize{U, En}}}_{\omega_i}$
gives a convergent sequence of numerical solutions. This verfies
the convergence of our adaptive GMsFEM.
%
\item 
The performance of the proposed indicators $\eta^{\mbox{\scriptsize{H, En}}}_{\omega_i}$
and $\eta^{\mbox{\scriptsize{U, En}}}_{\omega_i}$
is similar to that of the exact indicators $\eta^{\mbox{\scriptsize{H,Ex}}}_{\omega_i}$ and $\eta^{\mbox{\scriptsize{U,Ex}}}_{\omega_i}$.
Thus, the proposed indicator gives a good estimate of the exact error.

%
\item The performance of the weighted $H^{-1}$-based indicator
is much better than that of the $L^2$-based indicator for high-contrast
problems.
\item 
The use of the snapshot space to compute $\eta^{\mbox{\scriptsize{H, En}}}_{\omega_i}$
and $\eta^{\mbox{\scriptsize{U, En}}}_{\omega_i}$
in \eqref{eq:loc_Dirichlet} gives similar results 
compared to the use of local fine-grid solves.
Thus, the computations of $\eta^{\mbox{\scriptsize{H, En}}}_{\omega_i}$
and $\eta^{\mbox{\scriptsize{U, En}}}_{\omega_i}$
can be performed efficiently. 
%
\item With the use of $\theta=0.2$, we obtain more accurate results for the same
dimensional offline spaces compared with $\theta=0.7$.
\end{itemize}
In the tables listed below, we recall that
$V_{\text{off}}$ denotes the offline space; $u$, $u_{\text{snap}}$ and $u_{\text{off}}$ denote the fine-scale, snapshot and offline solutions respectively.
Moreover, to compare the results, we will
compute the error $u-u_{\text{off}}$ using
the $L^2$ relative error and the energy relative error, which are defined as
\begin{equation}
\| u-u_{\text{off}}\|_{L^2_{\kappa}(D)} := \frac{ \| u - u_{\text{off}} \|_{L^2(V)} }{ \| u \|_{L^2(V)} }, \quad\quad
\| u-u_{\text{off}}\|_{H^1_{\kappa}(D)} := \frac{ a(u-u_{\text{off}}, u-u_{\text{off}})^{\frac{1}{2}} }{ a(u,u)^{\frac{1}{2}} }
\end{equation}
where the weighted $L^2$-norm is defined as $\|u\|_{L^2(V)} = \| \widetilde{\kappa}^{\frac{1}{2}} u \|_{L^2(D)}$. 
We will also compute the error $u_{\text{snap}}-u_{\text{off}}$ using the same norms
\begin{equation}
\| u_{\text{snap}}-u_{\text{off}}\|_{L^2_{\kappa}(D)} := \frac{ \| u_{\text{snap}} - u_{\text{off}} \|_{L^2(V)} }{ \| u_{\text{snap}} \|_{L^2(V)} }, \quad\quad
\| u_{\text{snap}}-u_{\text{off}}\|_{H^1_{\kappa}(D)} := \frac{ a(u_{\text{snap}}-u_{\text{off}}, u-u_{\text{off}})^{\frac{1}{2}} }{ a(u_{\text{snap}},u_{\text{snap}})^{\frac{1}{2}} }.
\end{equation}
%
%
%
\subsection{Numerical results with harmonic basis}
\label{sec:621}
In this section, we present numerical examples to test the performance of the proposed indicator $\eta^{\mbox{\scriptsize{H,En}}}_{\omega_i}$ 
and the convergence of our adaptive enrichment algorithm with $\theta=0.7$ and $\theta=0.2$.
We will also compare our results with the use of the exact indicator $\eta^{\mbox{\scriptsize{H,Ex}}}_{\omega_i}$.
In the simulations, we take a snapshot space of dimension $7300$ 
giving errors of $0.05\%$ and $3.02\%$ in weighted $L^2$ and weighted $H^1$ norms, respectively. 
Thus, the solution $u_{\text{snap}}$ is as good as the fine-scale solution $u$. 
For the adaptive enrichment algorithm, the initial offline space has $4$
basis functions for each coarse grid node. 
At each enrichment (Step 4), we will add one basis function for the coarse grid nodes
selected in Step 3. 
We will terminate the iteration when the energy error $\| u - u_{\text{off}}\|_V$
is less than $5\%$ of $\| u - u_{\text{snap}}\|_V$.

%
\begin{table}[htb]
\centering
\begin{tabular}{|c|c|c|c|c|c|c|}
\hline 
\multirow{2}{*}{$\text{dim}(V_{\text{off}})$} &
\multicolumn{2}{c|}{  $\|u-u_{\text{off}} \|$ (\%) } &
\multicolumn{2}{c|}{  $\|u_{\text{snap}}-u_{\text{off}} \|$ (\%) }\\
\cline{2-5} {}&
$\hspace*{0.8cm}   L^{2}_\kappa(D)   \hspace*{0.8cm}$ &
$\hspace*{0.8cm}   H^{1}_\kappa(D)  \hspace*{0.8cm}$&
$\hspace*{0.8cm}   L^{2}_\kappa(D)   \hspace*{0.8cm}$ &
$\hspace*{0.8cm}   H^{1}_\kappa(D)  \hspace*{0.8cm}$\\
\hline\hline
       $1524$       &  $4.50$    & $31.29$ &  $4.49$    & $34.14$  \\
\hline
      $1711$      & $4.24$    & $27.37$ &  $4.23$    & $27.19$\\
\hline
      $2434$      & $2.34$    & $20.13$ &  $2.36$    & $20.31$\\
\hline

      $2637$      & $1.64$    & $15.43$ &  $1.61$    & $15.13$\\
\hline
       $3378$    & $0.54$  &$7.83$  &  $0.51$    & $7.22$\\
\hline
\end{tabular}
\caption{Convergence history for harmonic basis with $\theta=0.7$ and
$18$ iterations. The snapshot space has dimension $7300$ giving $0.05\%$ and $3.02\%$ weighted $L^2$ and weighted energy errors. When using 12 basis per coarse inner node, the weighted $L^2$ and the weighted $H^1$ errors will be $2.34\%$ and $19.77\%$, respectively, and the dimension of offline space is 4412.}
\label{table:Cross_theta.7_Harmonic}
\end{table}
%

%
\begin{table}[htb]
\centering
\begin{tabular}{|c|c|c|c|c|c|c|}
\hline 
\multirow{2}{*}{$\text{dim}(V_{\text{off}})$} &
\multicolumn{2}{c|}{  $\|u-u_{\text{off}} \|$ (\%) } &
\multicolumn{2}{c|}{  $\|u_{\text{snap}}-u_{\text{off}} \|$ (\%) }  \\
\cline{2-5} {}&
$\hspace*{0.8cm}   L^{2}_\kappa(D)   \hspace*{0.8cm}$ &
$\hspace*{0.8cm}   H^{1}_\kappa(D)  \hspace*{0.8cm}$&
$\hspace*{0.8cm}   L^{2}_\kappa(D)   \hspace*{0.8cm}$ &
$\hspace*{0.8cm}   H^{1}_\kappa(D)  \hspace*{0.8cm}$
\\
\hline\hline
       $1524$       &  $4.50$    & $31.29$ &  $4.49$    & $34.14$  \\
\hline
      $1646$      & $4.05$    & $26.80$ &  $4.04$    & $26.62$\\
\hline
      $1864$      & $3.09$    & $20.34$ &  $3.07$    & $20.11$\\
\hline

      $2220$      & $1.24$    & $14.43$ &  $1.20$    & $14.11$\\
\hline
       $3135$    & $0.48$  &$7.98$  &  $0.45$    & $7.39$\\
\hline
\end{tabular}
\caption{Convergence history for harmonic basis with $\theta=0.7$ and
$66$ iterations.
The number of iterations is $19$. The snapshot space has dimension $7300$ giving $0.05\%$ and $3.02\%$ weighted $L^2$ and weighted energy errors. 
When using 12 basis per coarse inner node, the weighted $L^2$ and the weighted $H^1$ errors will be $2.34\%$ and $19.77\%$, respectively, and the dimension of offline space is 4412.}
\label{table:Cross_theta.2_Harmonic}
\end{table}

In Table \ref{table:Cross_theta.7_Harmonic} and Table \ref{table:Cross_theta.2_Harmonic},
we present the convergence history of the adaptive enrichment algorithm
for $\theta=0.7$ and $\theta=0.2$ respectively. 
For both cases, we see a convergence of the algorithm.
For the case $\theta=0.7$, the algorithm requires $18$ iterations to achieve the desired accuracy. 
The dimension of the corresponding offline space is $3378$.
Moreover, the error $u-u_{\text{off}}$ in relative weighted $L^2$ and energy norms
are $0.54\%$ and $7.83\%$ respectively, while the 
error $u_{\text{snap}}-u_{\text{off}}$ in relative weighted $L^2$ and energy norms
are $0.51\%$ and $7.22\%$ respectively.
And we see the similarity of the errors $u-u_{\text{off}}$ and $u_{\text{snap}}-u_{\text{off}}$.
For the case $\theta=0.2$, the algorithm requires $66$ iterations to achieve the desired accuracy. 
The dimension of the corresponding offline space is $3135$.
Moreover, the error $u-u_{\text{off}}$ in relative weighted $L^2$ and energy norms
are $0.48\%$ and $7.98\%$ respectively, while the 
error $u_{\text{snap}}-u_{\text{off}}$ in relative weighted $L^2$ and energy norms
are $0.45\%$ and $7.39\%$ respectively.
Furthermore,  we observe that the use of $\theta=0.2$ 
gives the same level of error for a smaller offline space compared with $\theta=0.7$. 
Thus, we conclude that a smaller value of $\theta$ will give a more economical offline space. 
To show that our adaptive enrichment algorithm gives a more efficient scheme, 
we report some computational results with uniform enrichment. 
In this case, we use $12$ basis functions for each interior coarse grid node giving 
an offline space of dimension $4412$. 
The relative weighted $L^2$ and energy errors are $2.32\%$ and $19.53\%$ respectively. 
From this result, we see that our adaptive enrichment algorithm
gives a smaller offline space and at the same time a better accuracy
than a scheme with uniform number of basis functions. 


%
\begin{table}[htb]
\centering
\begin{tabular}{|c|c|c|c|c|c|c|}
\hline 
\multirow{2}{*}{$\text{dim}(V_{\text{off}})$} &
\multicolumn{2}{c|}{  $\|u-u^{\text{off}} \|$ (\%) } &
\multicolumn{2}{c|}{  $\|u^{\text{snap}}-u^{\text{off}} \|$ (\%) }  \\
\cline{2-5} {}&
$\hspace*{0.8cm}   L^{2}_\kappa(D)   \hspace*{0.8cm}$ &
$\hspace*{0.8cm}   H^{1}_\kappa(D)  \hspace*{0.8cm}$&
$\hspace*{0.8cm}   L^{2}_\kappa(D)   \hspace*{0.8cm}$ &
$\hspace*{0.8cm}   H^{1}_\kappa(D)  \hspace*{0.8cm}$
\\
\hline\hline
       $1524$       &  $4.50$    & $31.29$ &  $4.49$    & $34.14$  \\
\hline
      $1762$      & $3.96$    & $27.09$ &  $3.95$    & $26.91$\\
\hline
      $2333$      & $2.07$    & $19.00$ &  $2.04$    & $18.75$\\
\hline

      $2522$      & $1.38$    & $15.12$ &  $1.36$    & $14.81$\\
\hline
       $3466$    & $0.46$  &$7.52$  &  $0.44$    & $6.89$\\
\hline
\end{tabular}
\caption{Convergence history for harmonic basis with $\theta=0.7$ and the exact indicator.
The number of iterations is $23$. The snapshot space has dimension $7300$ giving $0.05\%$ and $3.02\%$ weighted $L^2$ and weighted energy errors. 
}
\label{table:Cross_theta.7_Harmonic_Uniform}
\end{table}
To test the reliability and efficiency of the proposed indicator, we apply the adaptive enrichment algorithm
with the exact energy error as indicator and $\theta=0.7$. The results are shown 
in Table \ref{table:Cross_theta.7_Harmonic_Uniform}.
In particular, the algorithm requires $19$ iterations to achieve the desired accuracy. 
The dimension of the corresponding offline space is $3466$.
Moreover, the error $u-u_{\text{off}}$ in relative weighted $L^2$ and energy norms
are $0.46\%$ and $7.52\%$ respectively,
while the 
error $u_{\text{snap}}-u_{\text{off}}$ in relative weighted $L^2$ and energy norms
are $0.44\%$ and $6.89\%$ respectively.
Comparing the results in Table \ref{table:Cross_theta.7_Harmonic} and Table \ref{table:Cross_theta.7_Harmonic_Uniform}
for the use of the proposed and the exact indicator respectively, 
we see that both indicators give similar convergence behavior and offline space dimensions. 


%
\begin{figure}[htb]
 \centering
 \subfigure[Proposed indicator with $\theta=0.7$]{\label{fig:HarmonicBasis_NoOfBasisPerCoarseNode_Cross}
    \includegraphics[width = 0.30\textwidth, keepaspectratio = true]{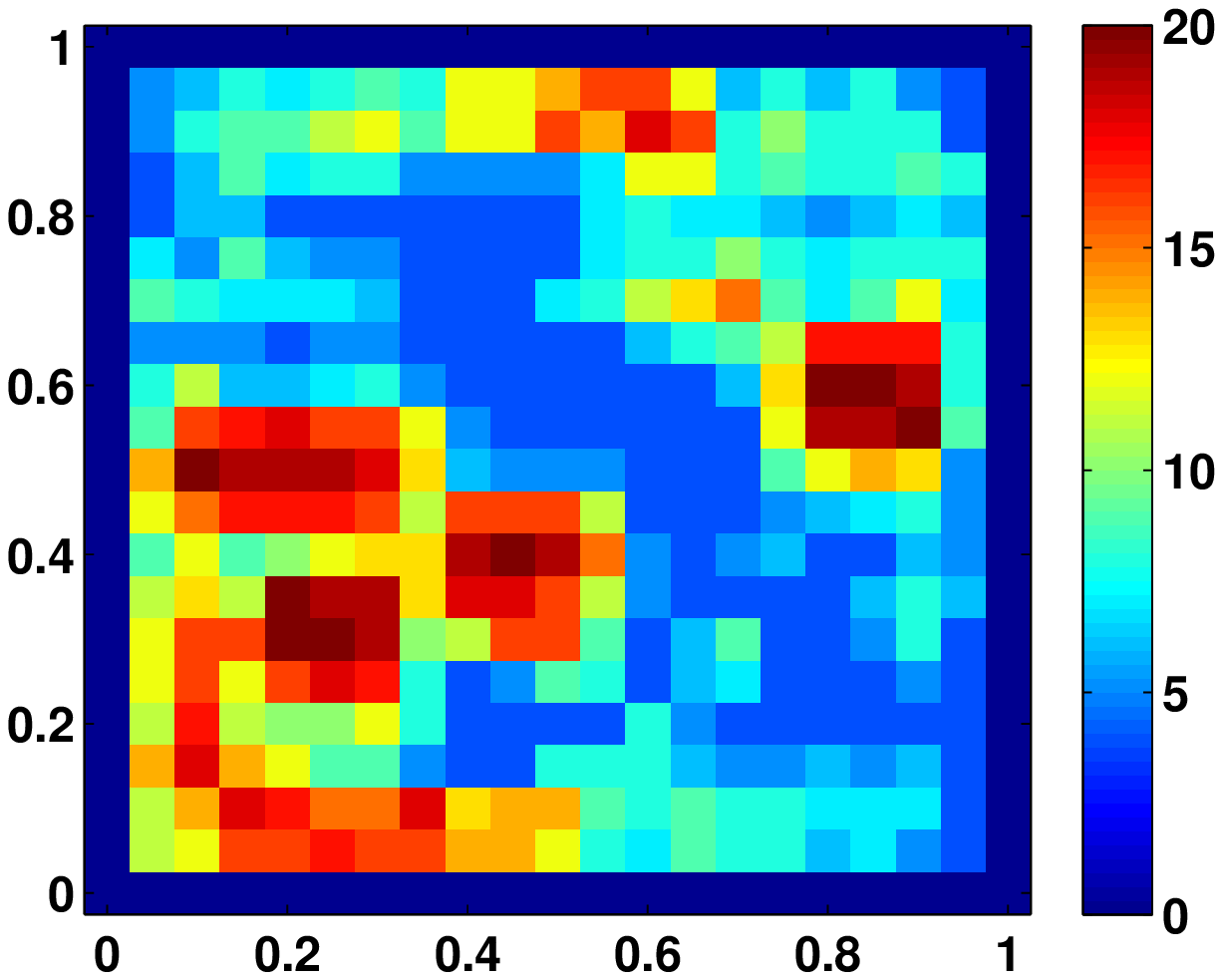}
   }
  \subfigure[Proposed indicator with $\theta=0.2$]{\label{fig:HarmonicBasis_NoOfBasisPerCoarseNode_Cross_theta.2}
     \includegraphics[width = 0.30\textwidth, keepaspectratio = true]{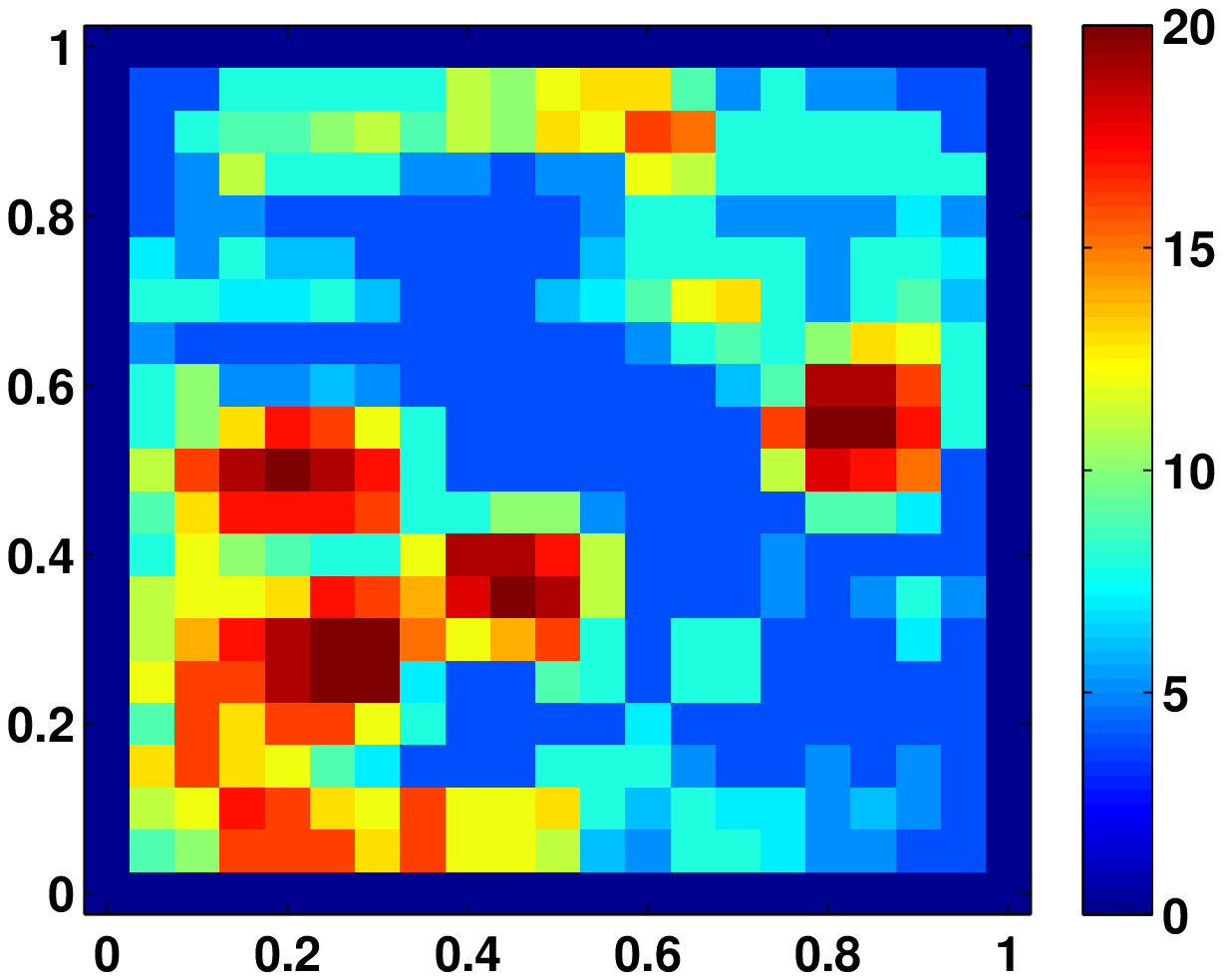}
  }
 \subfigure[Exact indicator with $\theta=0.7$]{\label{fig:HarmonicBasis_NoOfBasisPerCoarseNode_Cross_Uniform}
     \includegraphics[width = 0.30\textwidth, keepaspectratio = true]{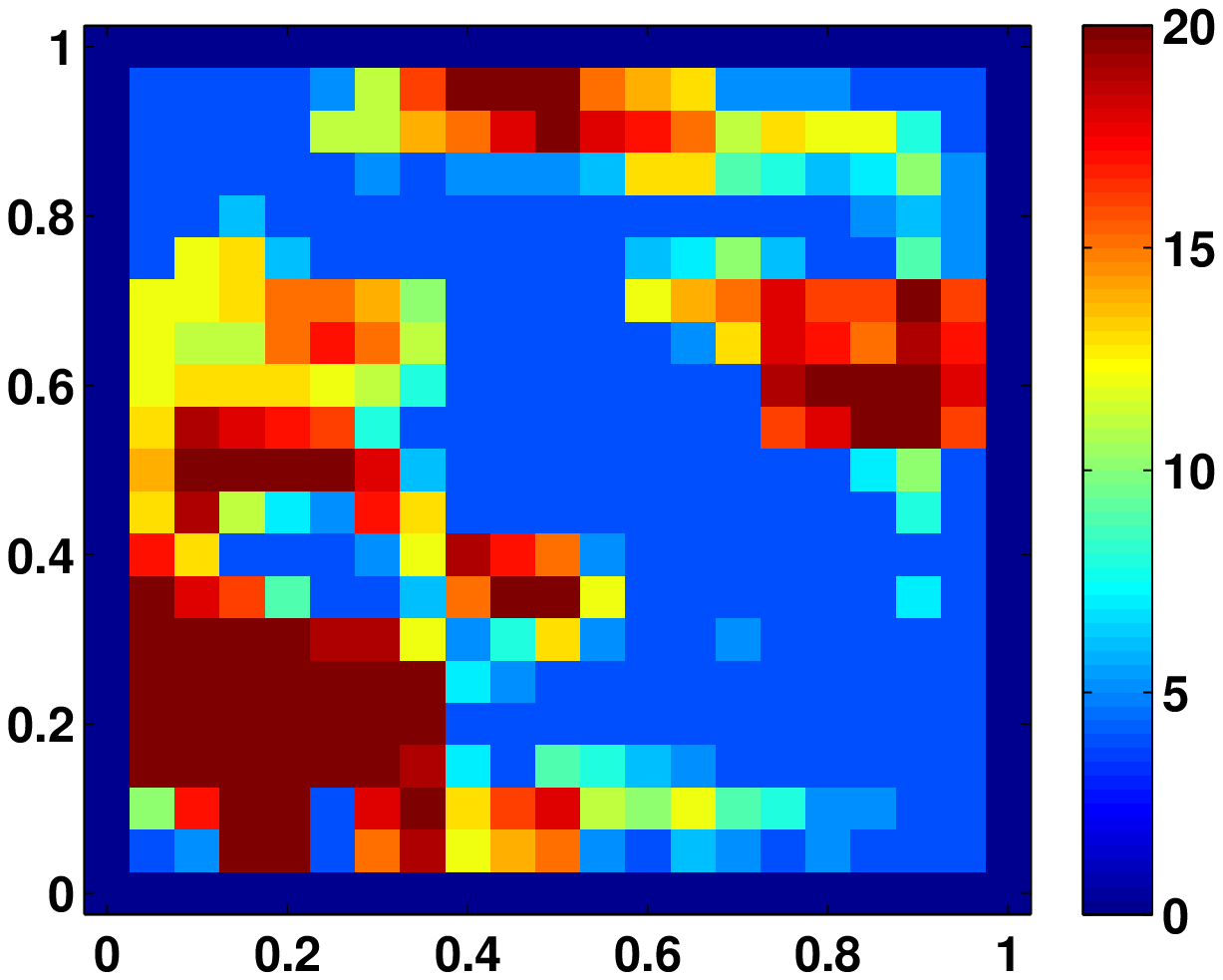}
  }
 \caption{Dimension distributions of the last offline space for harmonic basis with permeability field \ref{fig:newperm2_cross2}.}
\label{fig:BasisPerNode_Harmonic_Cross}
\end{figure}
In Figure \ref{fig:BasisPerNode_Harmonic_Cross}, we display the number of basis functions for each coarse grid node
of the last offline spaces 
for the proposed indicator with $\theta=0.7$, the proposed indicator with $\theta=0.2$ and the exact indicator with $\theta=0.7$.
From Figures \ref{fig:HarmonicBasis_NoOfBasisPerCoarseNode_Cross} and \ref{fig:HarmonicBasis_NoOfBasisPerCoarseNode_Cross_theta.2},
we observe a similar dimension distribution for the use of the proposed indicator with $\theta=0.7$ and $\theta=0.2$,
and the case $\theta=0.2$ gives a smaller number of basis functions. 
For the case with the exact indicator, we see from Figure \ref{fig:HarmonicBasis_NoOfBasisPerCoarseNode_Cross_Uniform}
that the dimension distribution follows a similar pattern, but with regions that contain larger number of basis functions. 


%
\begin{figure}[htb]
 \centering
\subfigure[Proposed indicator with the last offline space]{\label{fig:Hm_ErrorDistri_Cross_larger}\includegraphics[width = 0.22\textwidth, keepaspectratio = true]{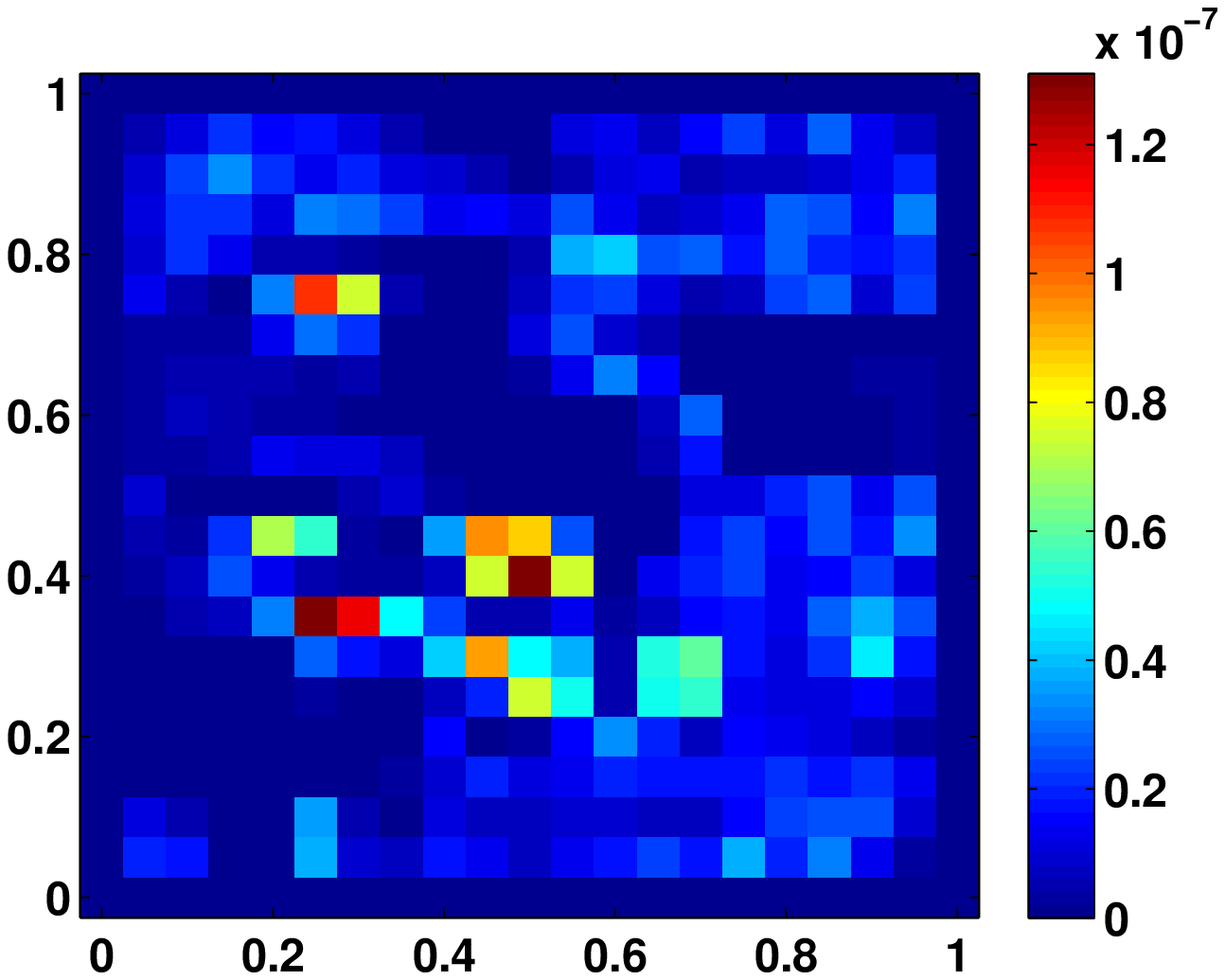}}
\subfigure[Proposed indicator with an intermediate offline space]{\label{fig:Hm_ErrorDistri_Cross_smaller}\includegraphics[width = 0.22\textwidth, keepaspectratio = true]{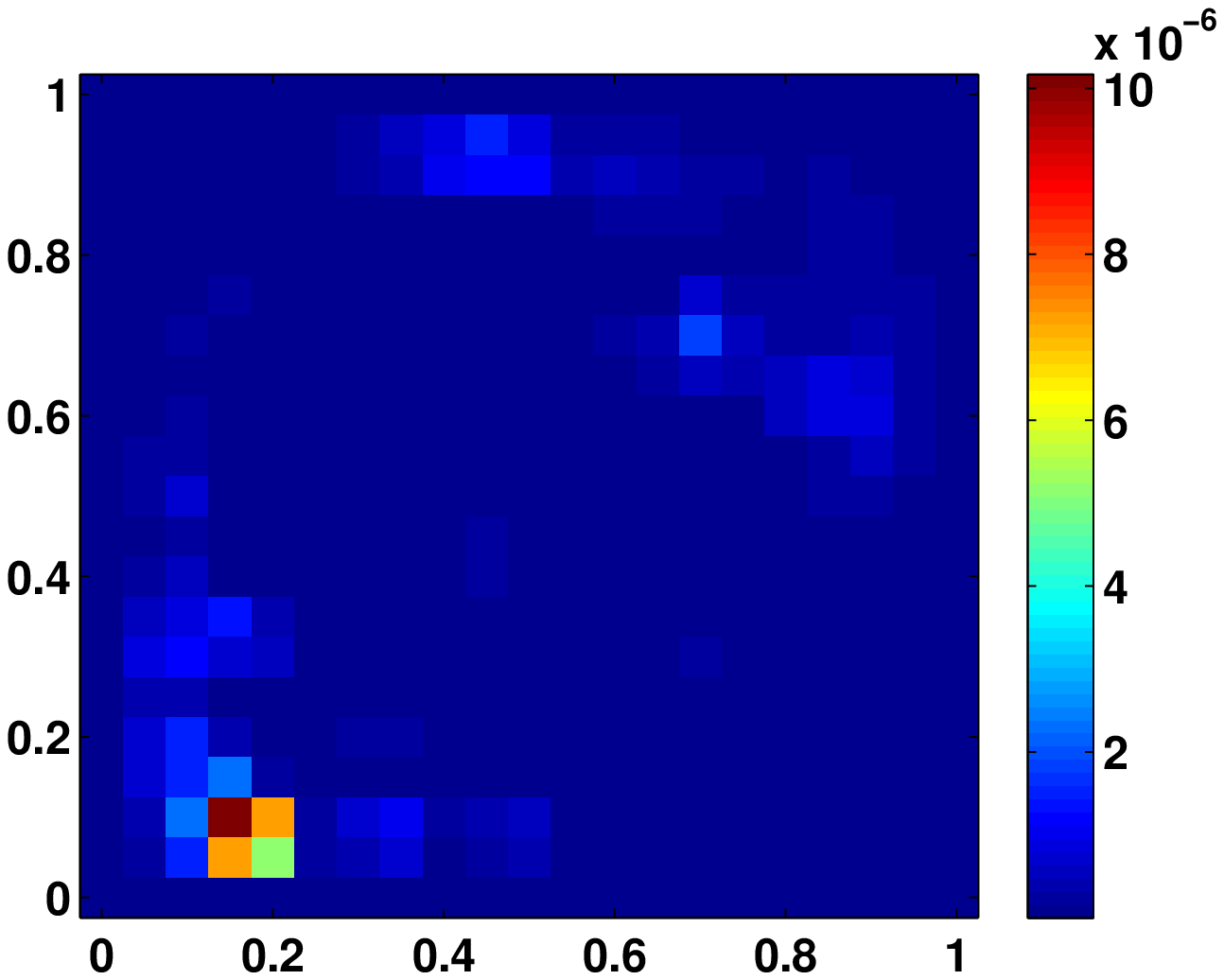}}
    \subfigure[Exact indicator with the last offline space]{\label{fig:Hm_ErrorDistri_Cross_U_larger}\includegraphics[width = 0.22\textwidth, keepaspectratio = true]{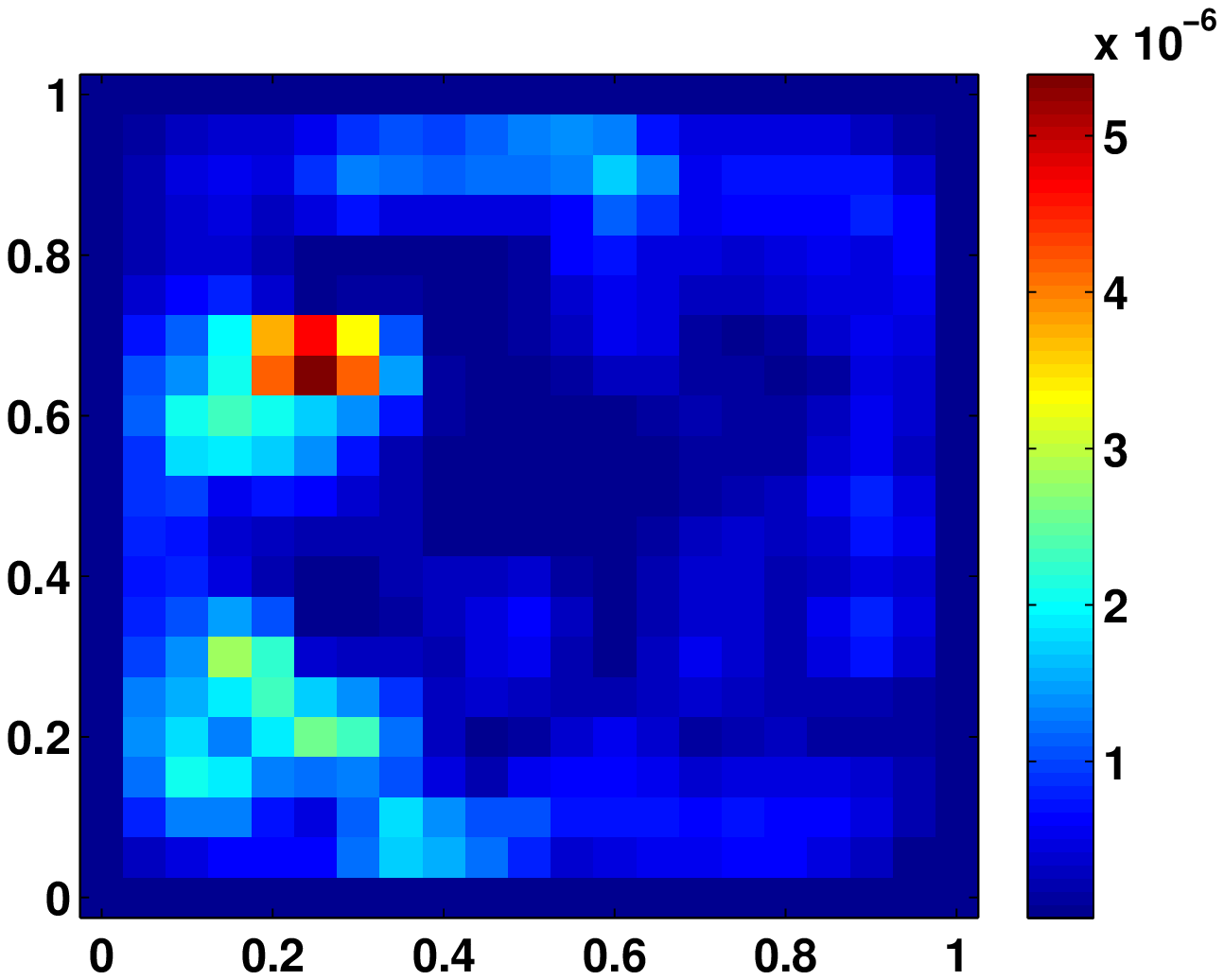}}
\subfigure[Exact indicator with an intermediate offline space]{\label{fig:Hm_ErrorDistri_Cross_U_smaller}\includegraphics[width = 0.22\textwidth, keepaspectratio = true]{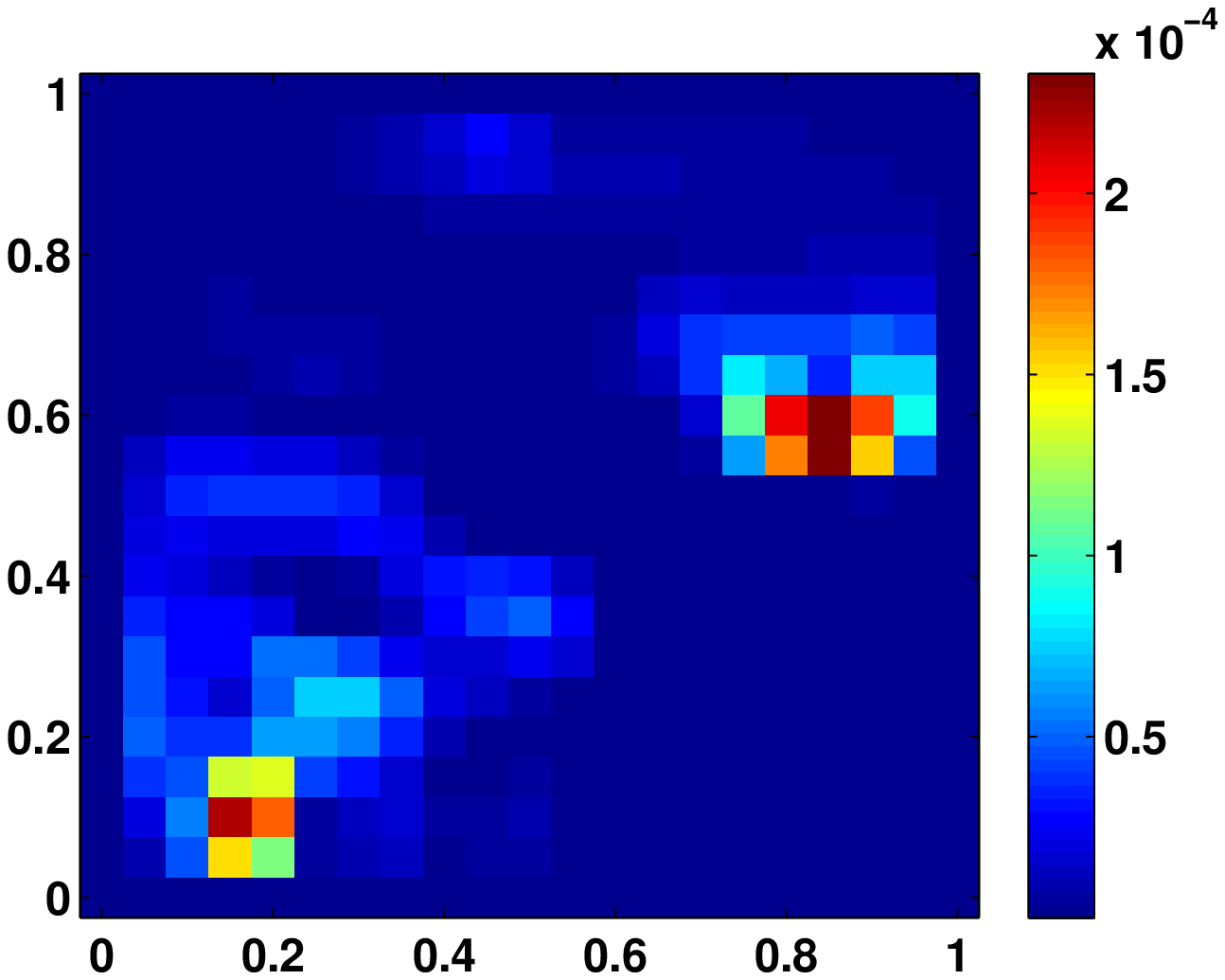}}
\caption{Coarse-grid energy error distribution using harmonic basis with permeability field \ref{fig:newperm2_cross2}.}
\label{fig:ErrorDistri_Hm_Cross}
\end{figure}

Finally, 
we present the energy errors on coarse neighborhoods 
for $\theta=0.7$ for an intermediate offline space and the last offline space of the 
proposed indicator $\eta^{\mbox{\scriptsize{H,En}}}_{\omega_i}$ and the exact indicator $\eta^{\mbox{\scriptsize{H,Ex}}}_{\omega_i}$.
In Figures \ref{fig:Hm_ErrorDistri_Cross_larger} and \ref{fig:Hm_ErrorDistri_Cross_smaller},
the energy error distributions for the last offline spaces and an intermediate offline space
obtained by the proposed indicator are shown respectively. 
We see how the energy error is reduced by enriching the space 
from an intermediate step to the final step. 
A similar situation is also seen from Figures \ref{fig:Hm_ErrorDistri_Cross_U_larger} and \ref{fig:Hm_ErrorDistri_Cross_U_smaller}
for the case with the exact indicator.


%
\subsection{Numerical results with spectral basis}
\label{sec:622}
In this section,
we repeat the above tests using the spectral snapshot space instead of the harmonic snapshot space with the proposed indicator $\eta^{\mbox{\scriptsize{U,En}}}_{\omega_i}$ and the exact indicator $\eta^{\mbox{\scriptsize{U,Ex}}}_{\omega_i}$. The results are presented in Tables \ref{table:Cross_theta.7_Spectral}, \ref{table:Cross_theta.2_Spectral} and \ref{table:Cross_theta.7_Spectral_Uniform}. 
In the simulations, we take a snapshot space of dimension $3690$ 
giving errors of $0.01\%$ and $2.84\%$ in weighted $L^2$ and  energy norms respectively. 
Thus, the solution $u_{\text{snap}}$ is as good as the fine-scale solution $u$. 
For the adaptive enrichment algorithm, the initial offline space has $2$
basis functions for each coarse grid node. 
At each enrichment (Step 4), we will add one basis function for the coarse grid nodes
selected in Step 3. 
We will terminate the iteration when the energy error $\| u - u_{\text{off}}\|_V$
is less than $5\%$ of $\| u - u_{\text{snap}}\|_V$.


%
\begin{table}[htb]
\centering
\begin{tabular}{|c|c|c|c|c|c|c|}
\hline 
\multirow{2}{*}{$\text{dim}(V_{\text{off}})$} &
\multicolumn{2}{c|}{  $\|u-u^{\text{off}} \|$ (\%) } &
\multicolumn{2}{c|}{  $\|u^{\text{snap}}-u^{\text{off}} \|$ (\%) }  \\
\cline{2-5} {}&
$\hspace*{0.8cm}   L^{2}_\kappa(D)   \hspace*{0.8cm}$ &
$\hspace*{0.8cm}   H^{1}_\kappa(D)  \hspace*{0.8cm}$&
$\hspace*{0.8cm}   L^{2}_\kappa(D)   \hspace*{0.8cm}$ &
$\hspace*{0.8cm}   H^{1}_\kappa(D)  \hspace*{0.8cm}$
\\
\hline\hline
       $802$       &  $0.87$    & $20.15$ &  $0.87$    & $19.94$  \\
\hline
      $868$      & $0.83$    & $16.51$ &  $0.83$    & $16.26$\\
\hline
       $979$       &  $0.33$    & $12.62$ &  $0.33$    & $12.30$  \\
\hline
      $1106$      & $0.32$    & $10.44$ &  $0.32$    & $10.05$\\
\hline
       $1410$       &  $0.10$    & $7.43$ &  $0.10$    & $6.87$  \\
\hline
\end{tabular}
\caption{Convergence history for spectral basis with $\theta=0.7$ and $5$ iterations.
The snapshot space has dimension $3690$ giving $0.01\%$ and $2.84\%$ weighted $L^2$ and weighted energy errors. When using $5$ basis per interior coarse node, the weighted $L^2$ and the weighted energy errors will be $0.09\%$ and $7.40\%$, respectively, and the dimension of offline space is $1885$.}
\label{table:Cross_theta.7_Spectral}
\end{table}
\begin{table}[htb]
\centering
\begin{tabular}{|c|c|c|c|c|c|c|}
\hline 
\multirow{2}{*}{$\text{dim}(V_{\text{off}})$} &
\multicolumn{2}{c|}{  $\|u-u^{\text{off}} \|$ (\%) } &
\multicolumn{2}{c|}{  $\|u^{\text{snap}}-u^{\text{off}} \|$ (\%) }  \\
\cline{2-5} {}&
$\hspace*{0.8cm}   L^{2}_\kappa(D)   \hspace*{0.8cm}$ &
$\hspace*{0.8cm}   H^{1}_\kappa(D)  \hspace*{0.8cm}$&
$\hspace*{0.8cm}   L^{2}_\kappa(D)   \hspace*{0.8cm}$ &
$\hspace*{0.8cm}   H^{1}_\kappa(D)  \hspace*{0.8cm}$
\\
\hline\hline
       $802$       &  $0.87$    & $20.15$ &  $0.87$    & $19.94$  \\
\hline
      $856$      & $0.83$    & $16.25$ &  $0.82$    & $16.00$\\
\hline
       $960$       &  $0.34$    & $12.58$ &  $0.33$    & $12.26$  \\
\hline
      $1116$      & $0.32$    & $10.27$ &  $0.32$    & $9.87$\\
\hline
       $1334$       &  $0.09$    & $7.55$ &  $0.09$    & $6.99$  \\
\hline
\end{tabular}
\caption{Convergence history for spectral basis with $\theta=0.7$ and $19$ iterations.
The snapshot space has dimension $3690$ giving $0.01\%$ and $2.84\%$ weighted $L^2$ and weighted energy errors. When using $5$ basis per interior coarse node, the weighted $L^2$ and the weighted energy errors will be $0.09\%$ and $7.40\%$, respectively, and the dimension of offline space is $1885$.}
\label{table:Cross_theta.2_Spectral}
\end{table}

In Table \ref{table:Cross_theta.7_Spectral} and Table \ref{table:Cross_theta.2_Spectral},
we present the convergence history of the adaptive enrichment algorithm
for $\theta=0.7$ and $\theta=0.2$ respectively. 
For both cases, we see a clear convergence of the algorithm.
For the case $\theta=0.7$, the algorithm requires $5$ iterations to achieve the desired accuracy. 
The dimension of the corresponding offline space is $1410$.
Moreover, the error $u-u_{\text{off}}$ in relative weighted $L^2$ and energy norms
are $0.10\%$ and $7.43\%$ respectively, while the 
error $u_{\text{snap}}-u_{\text{off}}$ in relative weighted $L^2$ and energy norms
are $0.10\%$ and $6.87\%$ respectively.
For the case $\theta=0.2$, the algorithm requires $19$ iterations to achieve the desired accuracy. 
The dimension of the corresponding offline space is $1334$.
Moreover, the error $u-u_{\text{off}}$ in relative weighted $L^2$ and energy norms
are $0.09\%$ and $7.55\%$ respectively, while the 
error $u_{\text{snap}}-u_{\text{off}}$ in relative weighted $L^2$ and energy norms
are $0.09\%$ and $6.99\%$ respectively.
Furthermore,  we observe that the use of $\theta=0.2$ 
gives the same level of error for a smaller offline space compared with $\theta=0.2$. 
Thus, we conclude that a smaller value of $\theta$ will give a more economical offline space. 
To show that our adaptive enrichment algorithm gives a more efficient scheme, 
we report some computational results with uniform enrichment. 
In this case, we use $5$ basis functions for each interior coarse grid node giving 
an offline space of dimension $1885$. 
The relative weighted $L^2$ and energy errors are $0.09\%$ and $7.40\%$ respectively. 
From this result, we see that our adaptive enrichment algorithm
gives a smaller offline space and at the same time a better accuracy
than a scheme with uniform number of basis functions.

\begin{table}[htb]
\centering
\begin{tabular}{|c|c|c|c|c|c|c|}
\hline 
\multirow{2}{*}{$\text{dim}(V_{\text{off}})$} &
\multicolumn{2}{c|}{  $\|u-u^{\text{off}} \|$ (\%) } &
\multicolumn{2}{c|}{  $\|u^{\text{snap}}-u^{\text{off}} \|$ (\%) }  \\
\cline{2-5} {}&
$\hspace*{0.8cm}   L^{2}_\kappa(D)   \hspace*{0.8cm}$ &
$\hspace*{0.8cm}   H^{1}_\kappa(D)  \hspace*{0.8cm}$&
$\hspace*{0.8cm}   L^{2}_\kappa(D)   \hspace*{0.8cm}$ &
$\hspace*{0.8cm}   H^{1}_\kappa(D)  \hspace*{0.8cm}$
\\
\hline\hline
       $802$       &  $0.87$    & $20.15$ &  $0.87$    & $19.94$  \\
\hline
       $884$       & $0.42$    & $14.73$ &  $0.42$    & $14.45$  \\
\hline
      $1000$      & $0.18$    & $12.25$ &  $0.18$    & $11.91$\\
\hline
      $1119$      & $0.17$    & $9.83$ &  $0.17$    & $9.41$\\
\hline
      $1392$      & $0.10$    & $7.12$ &  $0.10$    & $6.53$  \\
\hline
\end{tabular}
\caption{Convergence history for spectral basis with $\theta=0.7$ and the exact indicator.
The number of iteration is $6$.
The snapshot space has dimension $3690$ giving $0.01\%$ and $2.84\%$ weighted $L^2$ and weighted energy errors. When using $5$ basis per interior coarse node, the weighted $L^2$ and the weighted energy errors will be $0.09\%$ and $7.40\%$, respectively, and the dimension of offline space is $1885$.
}
\label{table:Cross_theta.7_Spectral_Uniform}
\end{table}

To test the reliability and efficiency of the proposed indicator, we apply the adaptive enrichment algorithm
with the exact energy error as indicator and $\theta=0.7$. The results are shown 
in Table \ref{table:Cross_theta.7_Spectral_Uniform}.
In particular, the algorithm requires $6$ iterations to achieve the desired accuracy. 
The dimension of the corresponding offline space is $1392$.
Moreover, the error $u-u_{\text{off}}$ in relative weighted $L^2$ and energy norms
are $0.10\%$ and $7.12\%$ respectively,
while the 
error $u_{\text{snap}}-u_{\text{off}}$ in relative weighted $L^2$ and energy norms
are $0.10\%$ and $6.53\%$ respectively.
Comparing the results in Table \ref{table:Cross_theta.7_Spectral} and Table \ref{table:Cross_theta.7_Spectral_Uniform}
for the use of the proposed and the exact indicator respectively, 
we see that both indicators give similar convergence behavior and offline space dimensions. 
We also observe that the exact indicator performs better
in this case.

\begin{figure}[htb]
 \centering
\subfigure[Proposed indicator with $\theta=0.7$]{\label{fig:SpectralBasis_NoOfBasisPerCoarseNode_Cross}\includegraphics[width = 0.30\textwidth, keepaspectratio = true]{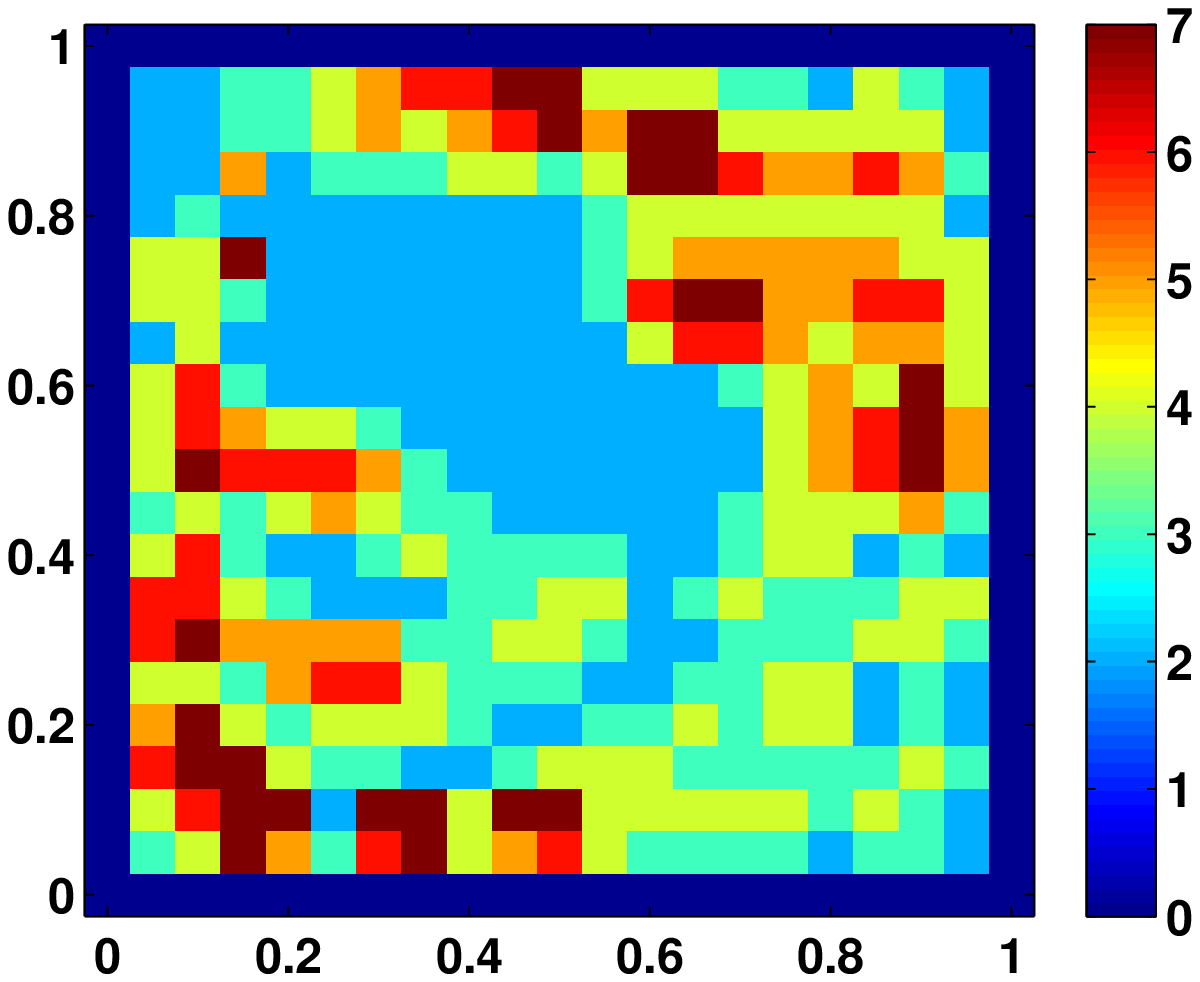}}
\subfigure[Proposed indicator with $\theta=0.2$]{\label{fig:SpectralBasis_NoOfBasisPerCoarseNode_Cross_theta.2}\includegraphics[width = 0.30\textwidth, keepaspectratio = true]{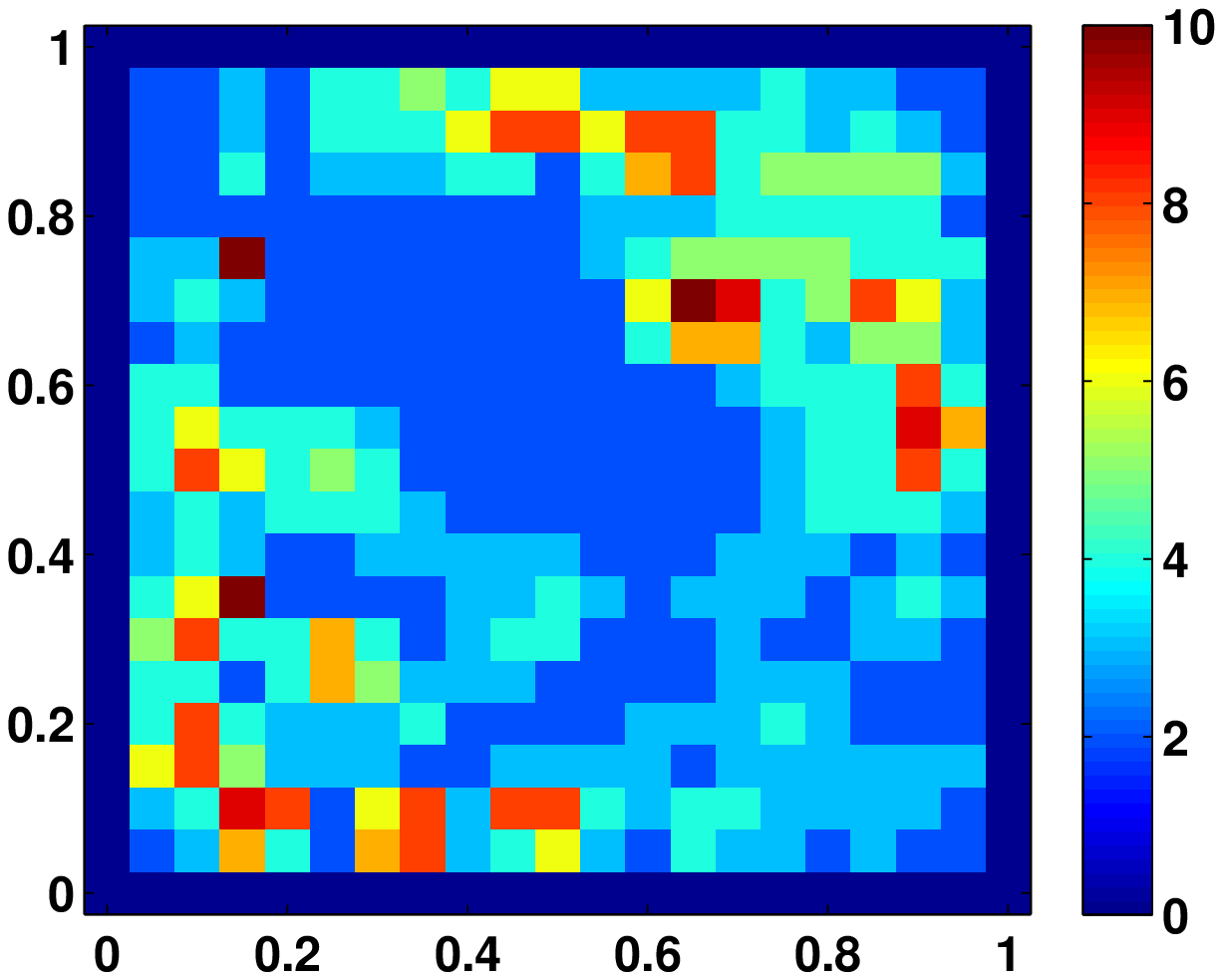}}
\subfigure[Exact indicator with $\theta=0.7$]{\label{fig:SpectralBasis_NoOfBasisPerCoarseNode_Cross_Uniform}\includegraphics[width = 0.30\textwidth, keepaspectratio = true]{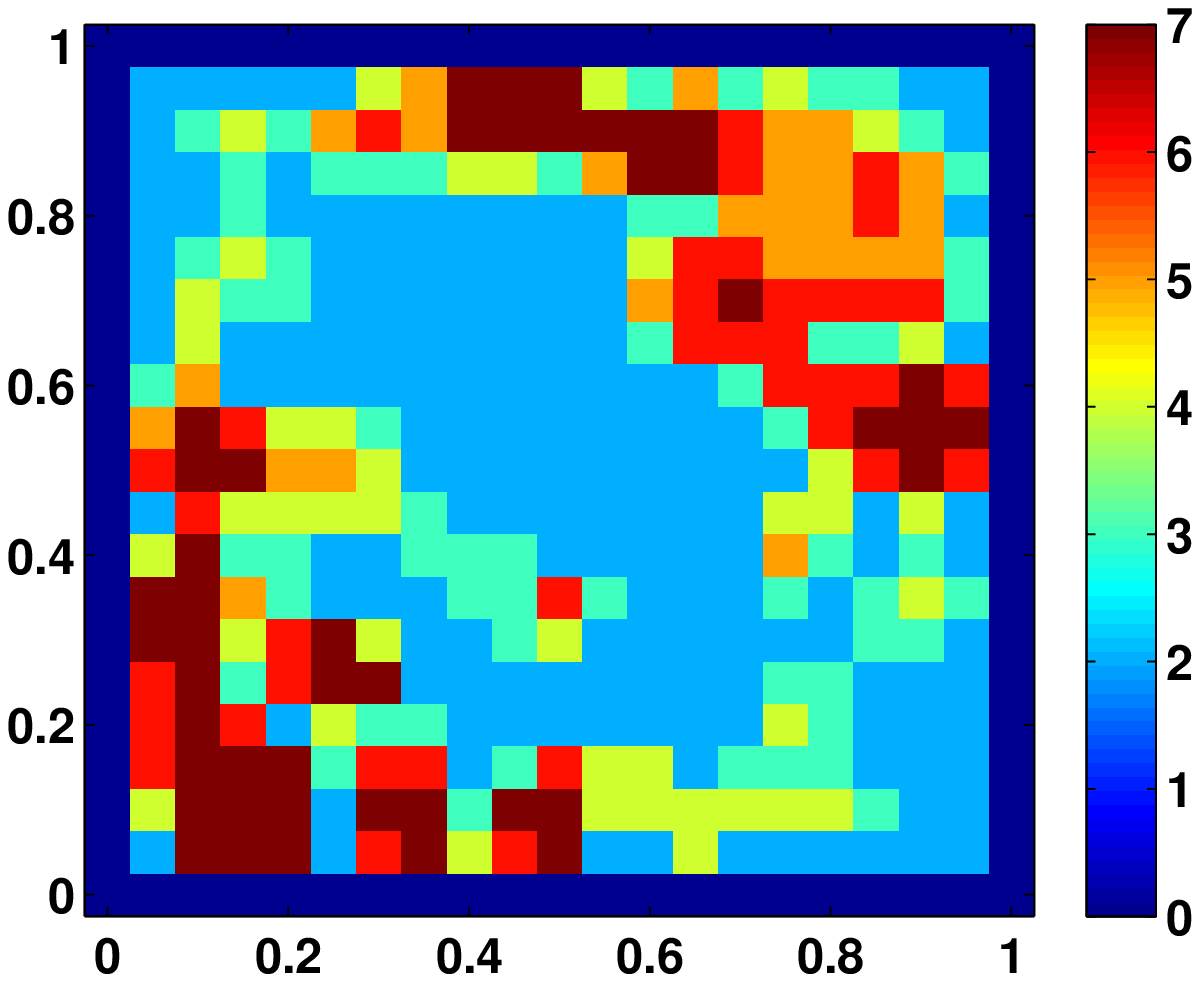}}
 \caption{Dimension distributions of the last offline space for spectral basis 
with permeability field \ref{fig:newperm2_cross2}.}
\label{fig:BasisPerNode_spectral_Cross}
\end{figure}
In Figure \ref{fig:BasisPerNode_spectral_Cross}, we display the number of basis functions for each coarse grid node
of the last offline spaces 
for the proposed indicator with $\theta=0.7$, the proposed indicator with $\theta=0.2$ and the exact indicator with $\theta=0.7$.
From Figures \ref{fig:SpectralBasis_NoOfBasisPerCoarseNode_Cross} and \ref{fig:SpectralBasis_NoOfBasisPerCoarseNode_Cross_theta.2},
we observe a similar dimension distribution for the use of the proposed indicator with $\theta=0.7$ and $\theta=0.2$,
and the case $\theta=0.2$ gives a smaller number of basis functions. 
For the case with the exact indicator, we see from Figure \ref{fig:SpectralBasis_NoOfBasisPerCoarseNode_Cross_Uniform}
that the dimension distribution follows a similar pattern, but with regions that contain larger number of basis functions.

\begin{figure}[htb!]
 \centering
 \subfigure[Proposed indicator with the last offline space]{\label{fig:Spectral_ErrorDistri_Cross_larger}
\includegraphics[width = 0.22\textwidth, keepaspectratio = true]{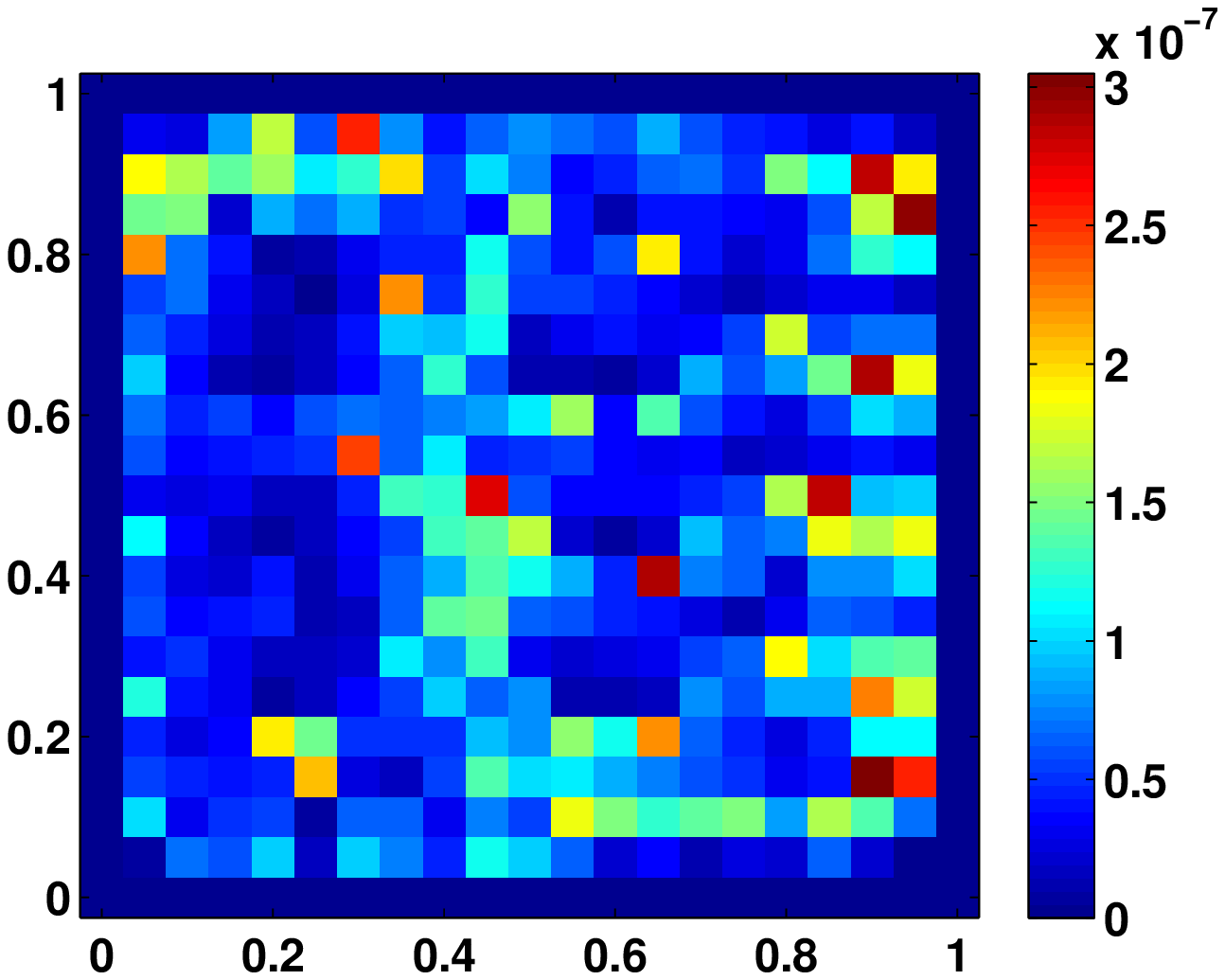}}
\subfigure[Proposed indicator with an intermediate offline space]{\label{fig:Spectral_ErrorDistri_Cross_smaller}\includegraphics[width = 0.22\textwidth, keepaspectratio = true]{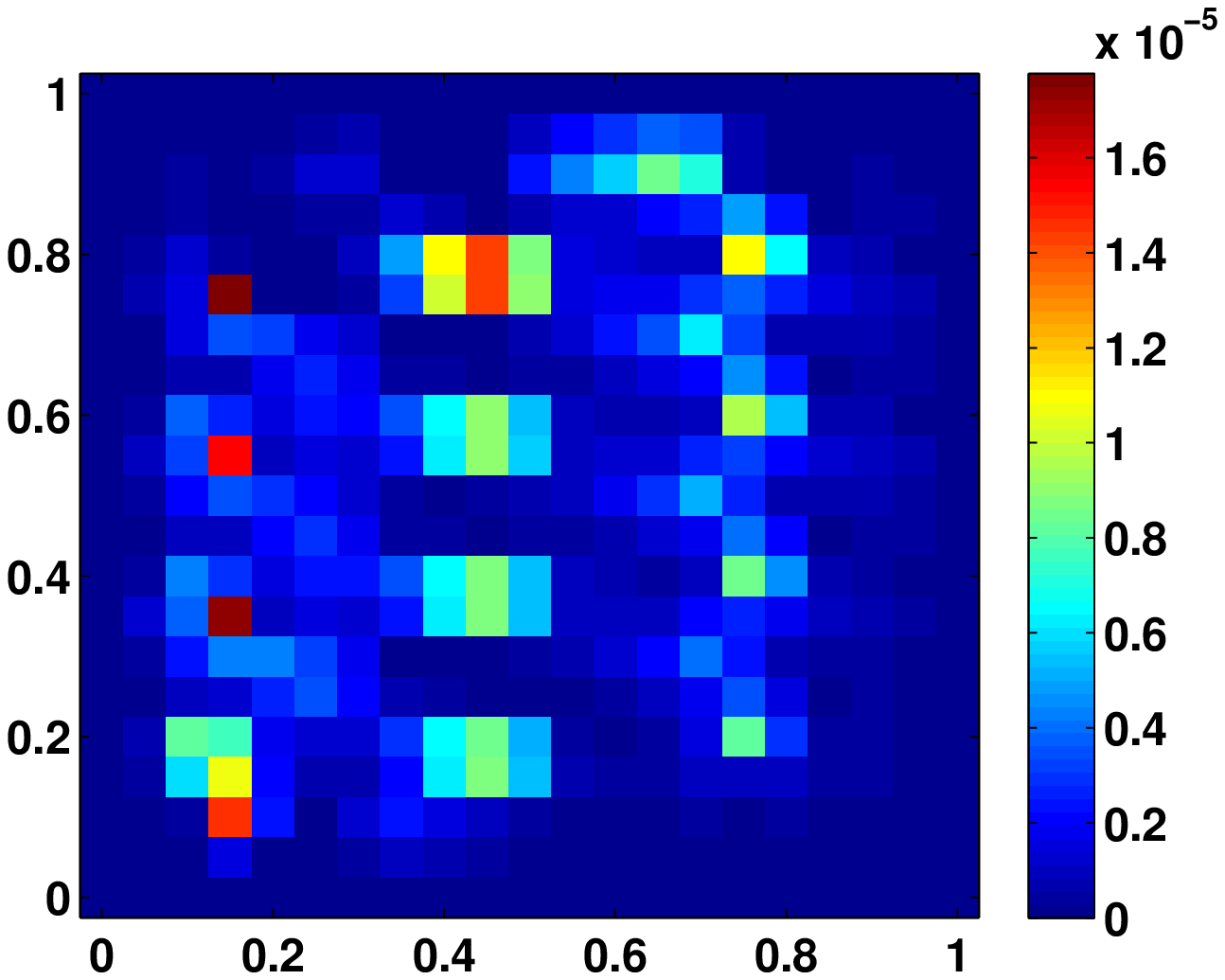}}
    \subfigure[Exact indicator with the last offline space]{\label{fig:Spectral_ErrorDistri_Cross_U_larger}
     \includegraphics[width = 0.22\textwidth, keepaspectratio = true]{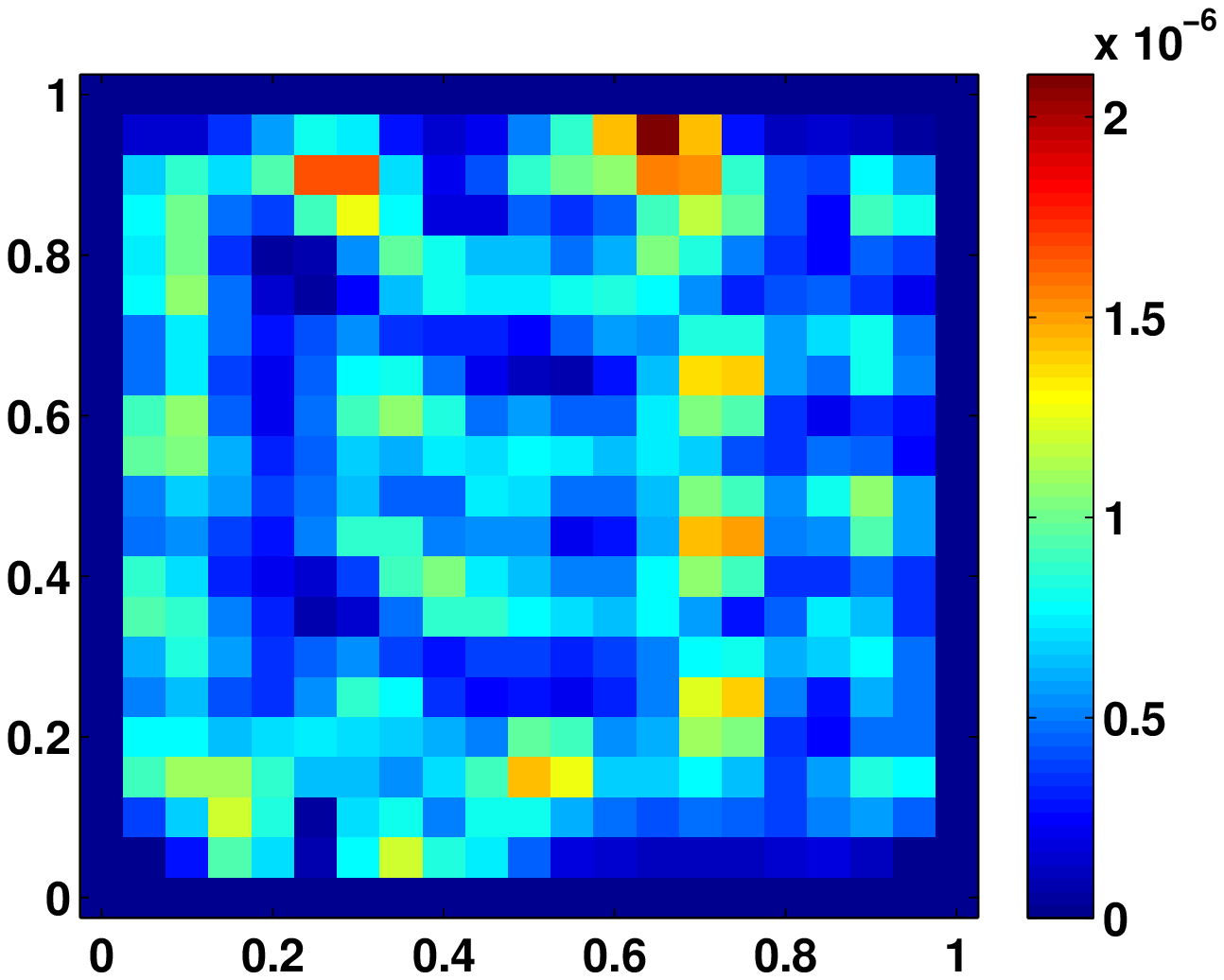}}
\subfigure[Exact indicator with an intermediate offline space]{\label{fig:Spectral_ErrorDistri_Cross_U_smaller}
     \includegraphics[width = 0.22\textwidth, keepaspectratio = true]{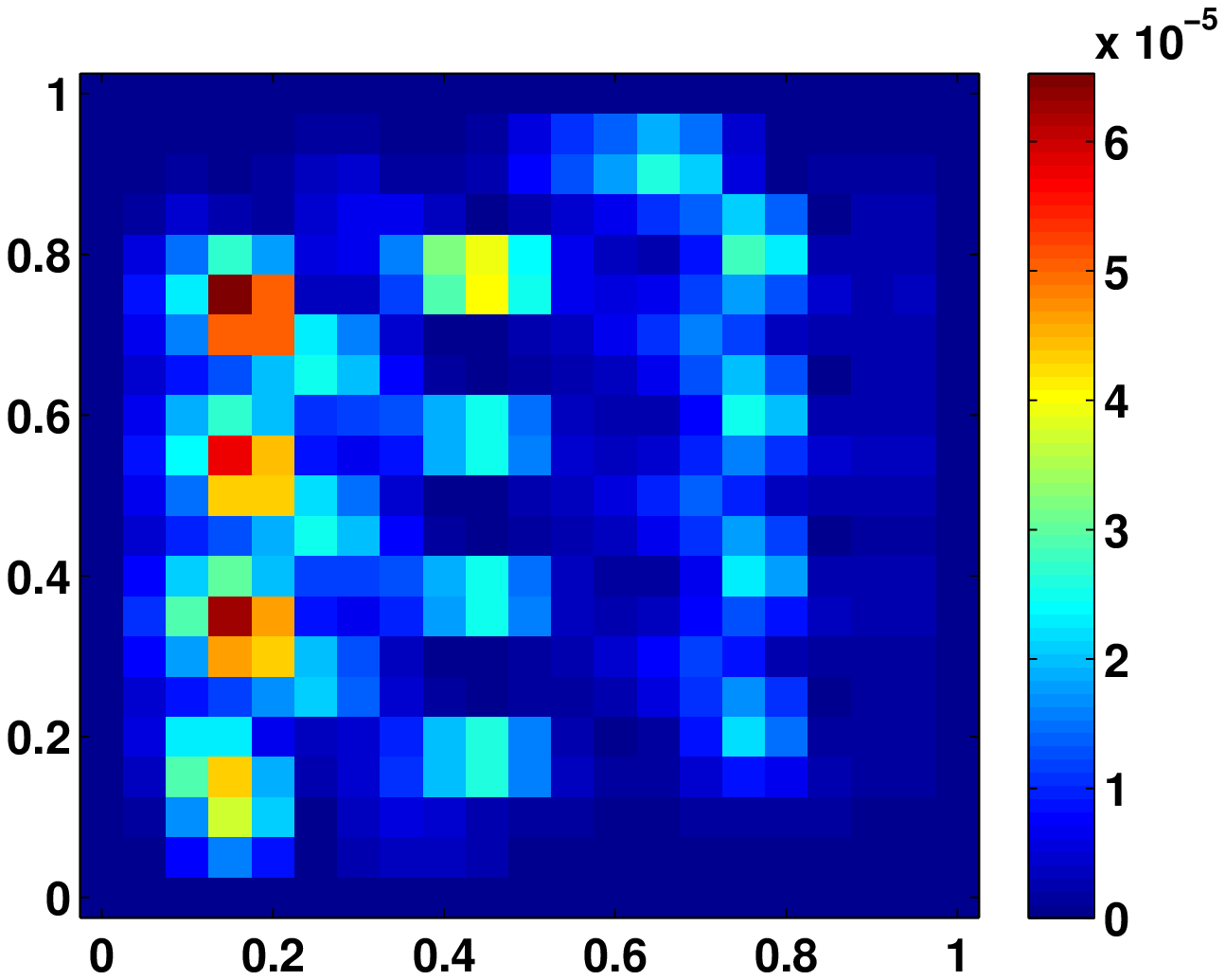}}
 \caption{Coarse-grid energy error distribution using spectral basis with permeability field \ref{fig:newperm2_cross2}.}
\label{fig:ErrorDistri_Spectral_Cross}
\end{figure}

Finally, 
we present the energy errors on coarse neighborhoods 
for $\theta=0.7$ for an intermediate offline space and the last offline space of the 
proposed indicator $\eta^{\mbox{\scriptsize{H,En}}}_{\omega_i}$ and the exact indicator $\eta^{\mbox{\scriptsize{H,Ex}}}_{\omega_i}$.
In Figures \ref{fig:Spectral_ErrorDistri_Cross_larger} and \ref{fig:Spectral_ErrorDistri_Cross_smaller},
the energy error distributions for the last offline spaces and an intermediate offline space
obtained by the proposed indicator are shown respectively. 
We see clearly that how the energy error is reduced by enriching the space 
from an intermediate step to the final step. 
A similar situation is also seen from Figures \ref{fig:Spectral_ErrorDistri_Cross_U_larger} and \ref{fig:Spectral_ErrorDistri_Cross_U_smaller}
for the case with the exact indicator.

\subsection{Numerical results with the $L^2$ indicator}
\label{sec:623}
In this section, we present some numerical simulations to test the performance of the $L^2$ indicator.
We note that this is the most natural error indicator, as it is more efficient to compute and is
widely used for classical adaptive finite element methods \cite{AdaptiveFEM}.
However, this indicator does not work well for high contrast coefficients. 
In the simulation, we will conduct the same test as in Section \ref{sec:621}
with the indicator replaced by $\eta^{\mbox{\scriptsize{H,L2}}}_{\omega_i}$. 

%
\begin{table}[htb]
\centering
\begin{tabular}{|c|c|c|c|c|c|c|}
\hline 
\multirow{2}{*}{$\text{dim}(V_{\text{off}})$} &
\multicolumn{2}{c|}{  $\|u-u^{\text{off}} \|$ (\%) } &
\multicolumn{2}{c|}{  $\|u^{\text{snap}}-u^{\text{off}} \|$ (\%) }  \\
\cline{2-5} {}&
$\hspace*{0.8cm}   L^{2}_\kappa(D)   \hspace*{0.8cm}$ &
$\hspace*{0.8cm}   H^{1}_\kappa(D)  \hspace*{0.8cm}$&
$\hspace*{0.8cm}   L^{2}_\kappa(D)   \hspace*{0.8cm}$ &
$\hspace*{0.8cm}   H^{1}_\kappa(D)  \hspace*{0.8cm}$
\\
\hline\hline
       $1524$       &  $4.50$    & $31.29$ &  $4.49$    & $31.14$  \\
\hline
      $1913$      & $3.59$    & $26.88$ &  $3.57$    & $26.69$\\
\hline
      $2513$      & $2.43$    & $21.46$ &  $2.43$    & $20.89$\\
\hline
      $3006$      & $1.11$    & $17.11$ &  $1.12$    & $16.83$\\
\hline
       $4509$    & $0.06$  &$7.97$  &  $0.04$    & $7.37$\\
\hline
\end{tabular}
\caption{Convergence history for harmonic basis using the $L^2$ indicator with $\theta=0.7$ and $94$ iterations.
The snapshot space has dimension $7300$ giving $0.05\%$ and $3.02\%$ weighted $L^2$ and weighted energy errors. 
When using $12$ basis per interior coarse node, the weighted $L^2$ and the weighted energy errors will be $2.34\%$ and $19.77\%$, respectively, and the dimension of offline space is $4412$.
}
\label{table:HCC_theta.7_Harmonic_nv}
\end{table}
In Table \ref{table:HCC_theta.7_Harmonic_nv},
we present the convergence history of the adaptive enrichment algorithm
for $\theta=0.7$, and 
we observe a clear convergence of the algorithm.
Notice that, the algorithm requires $94$ iterations to achieve the desired accuracy. 
The dimension of the corresponding offline space is $4509$.
If we compare these results to the case with the proposed indicator,
we see that the $L^2$ indicator
will give a much larger offline space and a larger number of iterations,
in order to achieve a similar accuracy. 


\begin{figure}[htb]
 \centering
 \subfigure[Basis distribution for $L^2$ indicator]{\label{fig:HarmonicBasis_NoOfBasisPerCoarseNode_Cross_nv}
    \includegraphics[width = 0.30\textwidth, keepaspectratio = true]{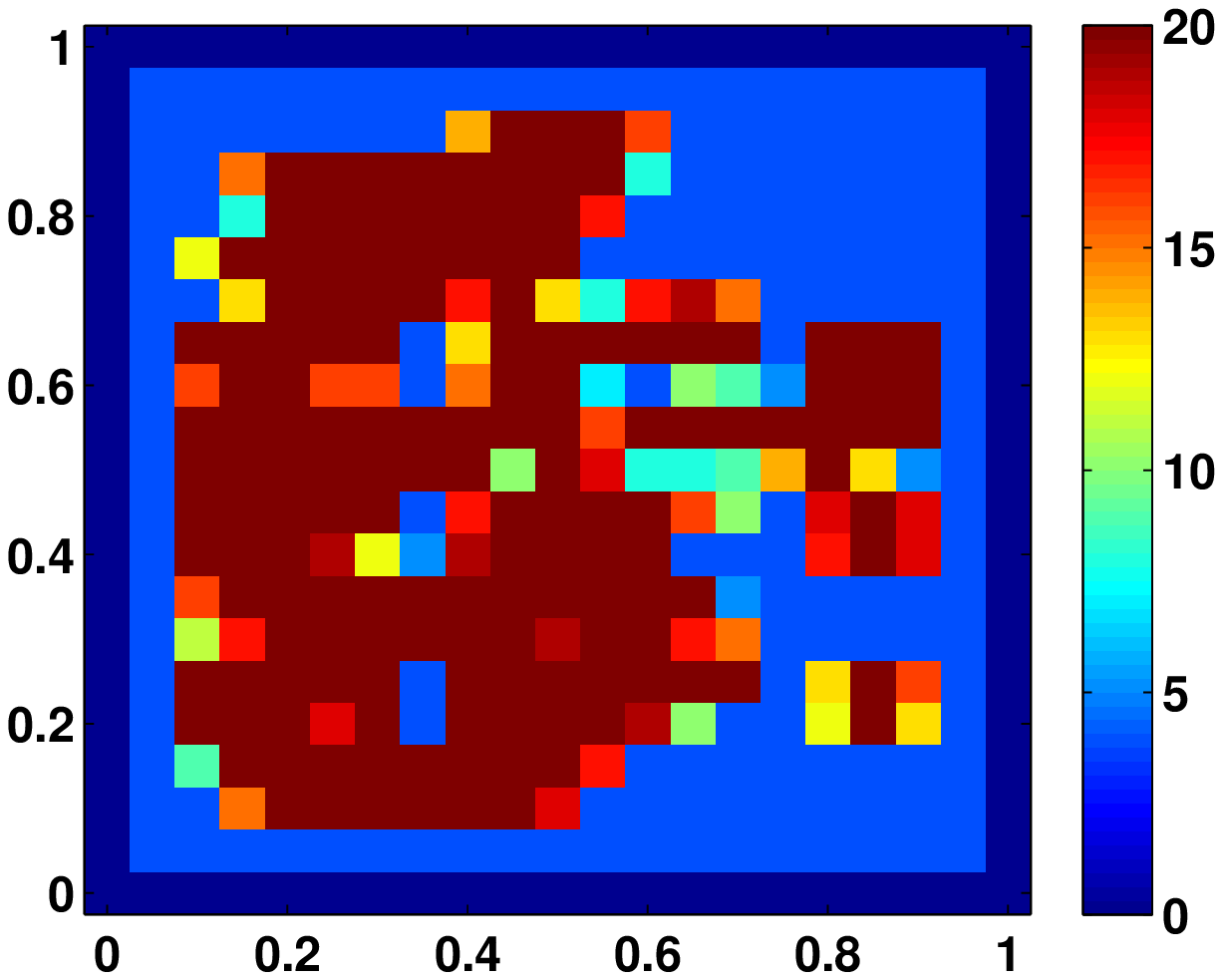}
   }
  \subfigure[Energy error with the last offline space]{\label{fig:Hm_ErrorDistri_Cross_tem_larger_nv}
    \includegraphics[width = 0.30\textwidth, keepaspectratio = true]{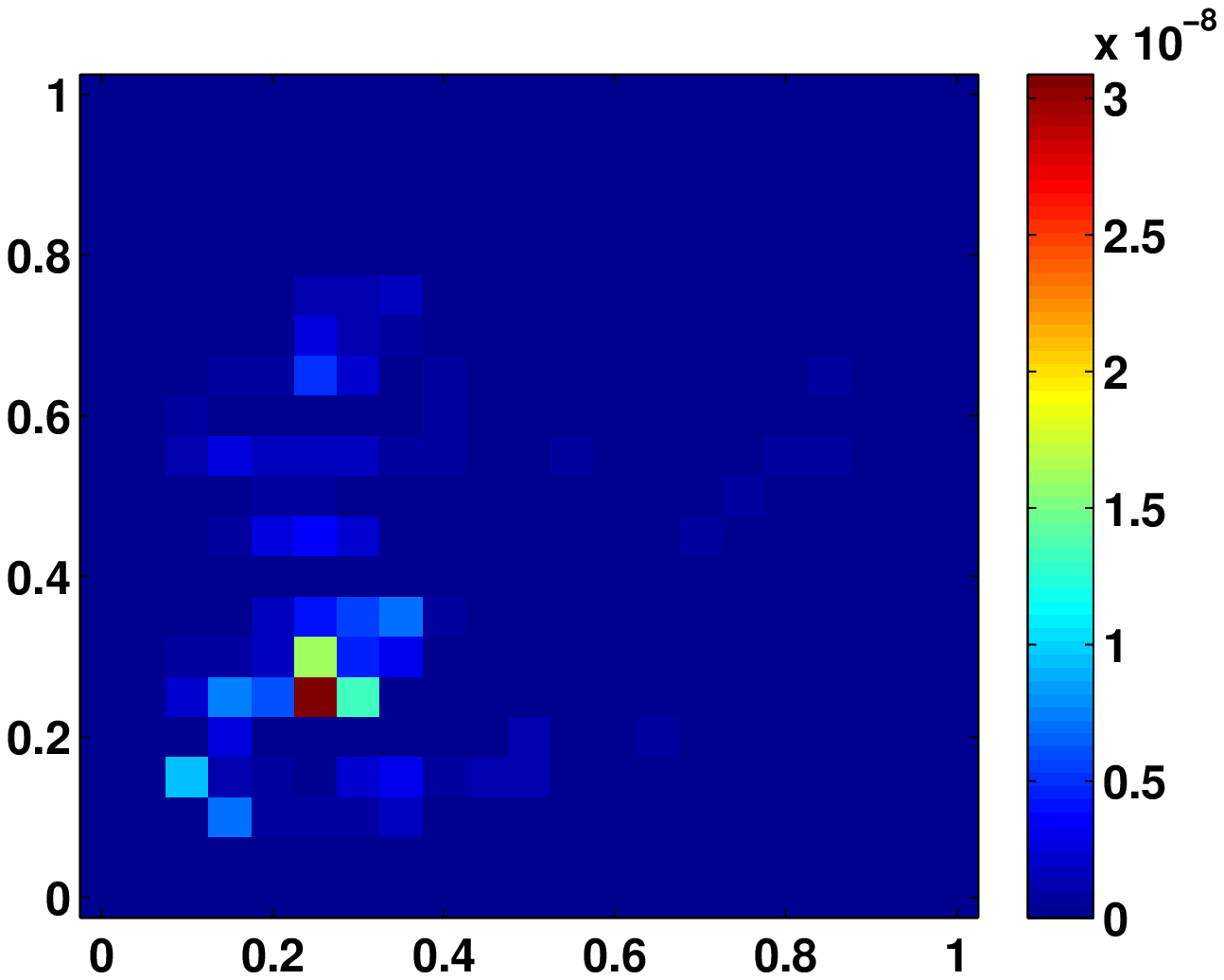}
   }
 \subfigure[Energy error with an intermediate offline space]{\label{fig:Hm_ErrorDistri_Cross_tem_smaller_nv}
    \includegraphics[width = 0.30\textwidth, keepaspectratio = true]{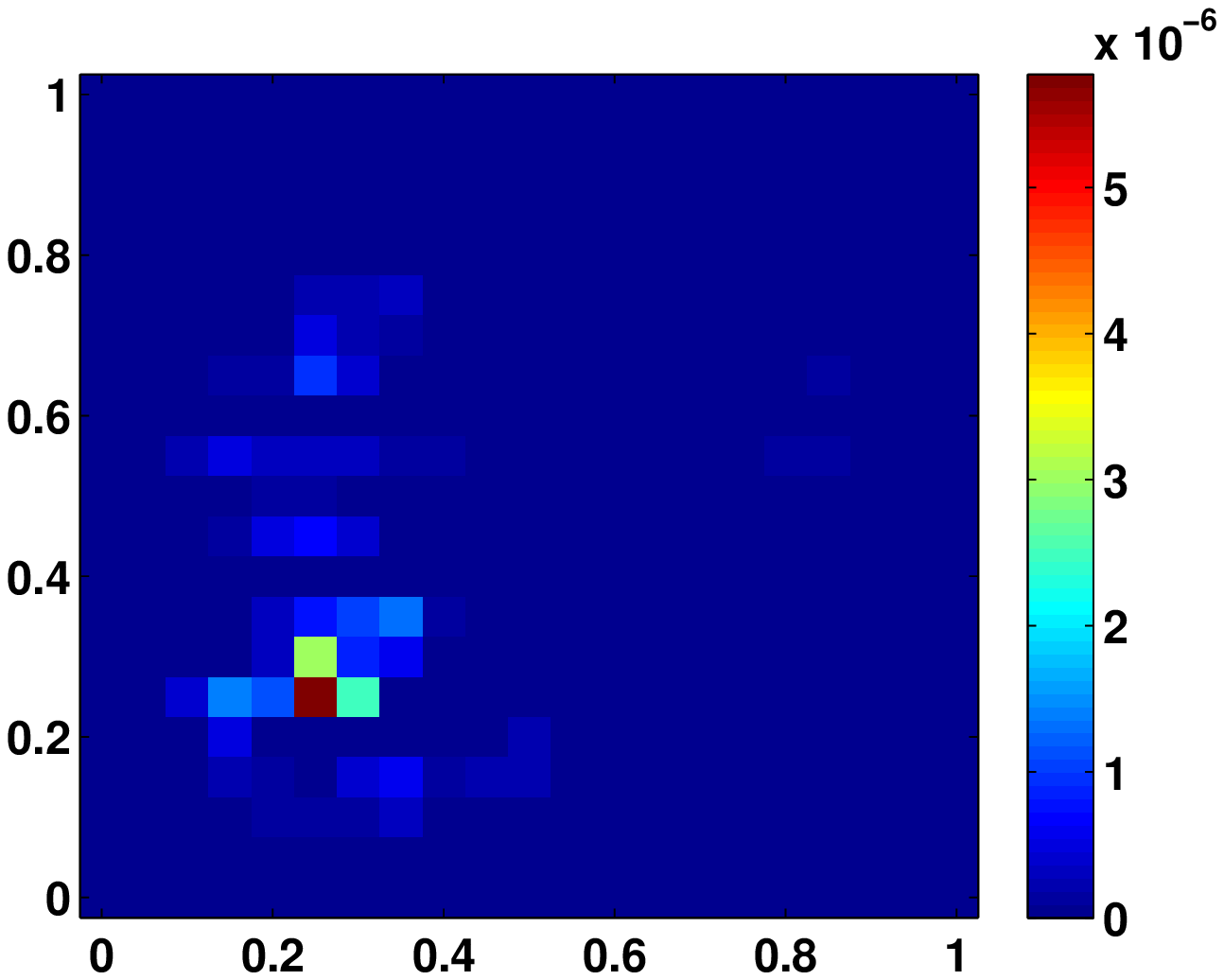}
   }
 \caption{Basis distribution and error distribution for harmonic basis with $L^2$ indicator.}
\label{fig:BasisPerNode_Harmonic_Cross_nv}
\end{figure}
%

Finally we will compare the basis function and error distributions
for the $L^2$ indicator with those for the proposed indicator.
In Figure \ref{fig:HarmonicBasis_NoOfBasisPerCoarseNode_Cross_nv},
the number of basis functions for each coarse node is shown. 
We observe that the distribution is similar to the case with the proposed indicator
shown in Figure \ref{fig:HarmonicBasis_NoOfBasisPerCoarseNode_Cross}.
We also observe that the number of basis functions for the $L^2$ indicator
is much larger than that for the proposed indicator. 
In Figures \ref{fig:Hm_ErrorDistri_Cross_tem_larger_nv} and \ref{fig:Hm_ErrorDistri_Cross_tem_smaller_nv},
the energy error distributions for the last offline spaces and an intermediate offline space
obtained by the $L^2$ indicator are shown respectively. 
We see clearly that how the energy error is reduced by enriching the space 
from an intermediate step to the final step. 
However, we also see a very slow decay in energy error
for the $L^2$ indicator. 


%
%
\subsection{Numerical results using snapshot solutions for the proposed indicator}
\label{sec:624}

In this section, we present numerical tests to show that 
our adaptive method is equally good when the proposed indicator $\eta^{\mbox{\scriptsize{H,En}}}_{\omega_i}$
is computed in the snapshot space. 
In particular, we will solve the local problems \eqref{eq:loc_Dirichlet} in the space of snapshots
instead of the fine scale space, in order to reduce the computational costs. 
We will again repeat the same test as in Section \ref{sec:621}. 
In Table \ref{table:HCC_theta.7_Harmonic_loc_snapshot} 
we present the convergence history of the adaptive enrichment algorithm
with $\theta=0.7$, and observe
a clear convergence of the algorithm.
Moreover, the algorithm requires $22$ iterations to achieve the desired accuracy. 
The dimension of the corresponding offline space is $3688$.
In addition, the error $u-u_{\text{off}}$ in relative weighted $L^2$ and energy norms
are $0.17\%$ and $7.83\%$ respectively, while the 
error $u_{\text{snap}}-u_{\text{off}}$ in relative weighted $L^2$ and energy norms
are $0.14\%$ and $7.26\%$ respectively.
If we compare these results with those for the proposed indicator (see Table \ref{table:Cross_theta.7_Harmonic}),
we see the use of snapshot space to compute the error indicator
will give a similar offline space and accuracy,
but with a larger number of iterations. 


%
\begin{table}[htb]
\centering
\begin{tabular}{|c|c|c|c|c|c|c|}
\hline 
\multirow{2}{*}{$\text{dim}(V_{\text{off}})$} &
\multicolumn{2}{c|}{  $\|u-u^{\text{off}} \|$ (\%) } &
\multicolumn{2}{c|}{  $\|u^{\text{snap}}-u^{\text{off}} \|$ (\%) }  \\
\cline{2-5} {}&
$\hspace*{0.8cm}   L^{2}_\kappa(D)   \hspace*{0.8cm}$ &
$\hspace*{0.8cm}   H^{1}_\kappa(D)  \hspace*{0.8cm}$&
$\hspace*{0.8cm}   L^{2}_\kappa(D)   \hspace*{0.8cm}$ &
$\hspace*{0.8cm}   H^{1}_\kappa(D)  \hspace*{0.8cm}$ 
\\
\hline\hline
       $1524$       &  $7.60$    & $50.86$ &  $7.59$    & $50.75$  \\
\hline
      $1772$      & $4.18$    & $27.08$ &  $4.18$    & $26.90$\\
\hline
      $2398$      & $2.41$    & $20.59$ &  $2.39$    & $20.36$\\
\hline

      $2824$      & $1.28$    & $13.98$ &  $1.25$    & $13.64$\\
\hline
       $3688$    & $0.17$  &$7.83$  &  $0.14$    & $7.26$\\
\hline
\end{tabular}
\caption{Convergence history for harmonic basis using snapshot space to compute the proposed indicator.
We take $\theta=0.7$ and the algorithm converges in $22$ iterations.
}
\label{table:HCC_theta.7_Harmonic_loc_snapshot}
\end{table}

\section{Convergence analysis}
\label{sec:analysis}

In this section, we will give the proofs for
the a-posteriori error estimates \eqref{eq:res1}-\eqref{eq:res2}
and the convergence of the adaptive enrichment algorithm. 

For each $i=1,2,\cdots, N$, we let $P_i: V \rightarrow \text{span}\{ \psi_k^{\omega_i,\text{off}} \}$
be the projection defined by
\begin{equation*}
P_i v = \sum_{k=1}^{l_i} \Big( \int_{\omega_i} \widetilde{\kappa} v \psi_k^{\omega_i,\text{off}} \Big) \psi_k^{\omega_i,\text{off}}.
\end{equation*}
The projection $P_i$ has following stability bound
\begin{equation}
\| \chi_i (P_i v) \|_{V_i} \leq C_{\text{stab}}^{\omega_i} \| v\|_{V_i}
\label{eq:eigenstab}
\end{equation}
where the constant $C_{\text{stab}}^{\omega_i} = \max ( 1,   H^{-1} (\lambda_{l_i+1}^{\omega_i})^{-\frac{1}{2}} )$. 
Moreover the following
convergence result holds
\begin{equation}
\| \chi_i ( v - P_i v) \|_{V_i} \leq C_{\text{conv}}^{\omega_i} (\lambda_{l_i+1}^{\omega_i})^{-\frac{1}{2}} \| v\|_{V_i}
\label{eq:eigenbound}
\end{equation}
where $C_{\text{conv}}^{\omega_i}$ is a uniform constant. 
We also define the projection $\Pi: V \rightarrow V_{\text{off}}$ by $\Pi v = \sum_{i=1}^N \chi_i (P_i v)$. 
For the analysis below, we let $C_{\text{stab}} = \max_{1\leq i\leq N} C_{\text{stab}}^{\omega_i}$
and $C_{\text{conv}} = \max_{1\leq i\leq N} C_{\text{conv}}^{\omega_i}$.

\subsection{Proof of Theorem \ref{thm:post}}

Let $v\in V$ be an arbitrary function in the space $V$. Using \eqref{eq:fine}, we have
\begin{equation*}
a(u-u_{\text{ms}},v) = a(u,v) - a(u_{\text{ms}},v) = (f,v) - a(u_{\text{ms}},v).
\end{equation*}
Then
\begin{equation*}
a(u-u_{\text{ms}},v) =  (f,v) - a(u_{\text{ms}},v)
= (f,v-\Pi v) + (f,\Pi v) - a(u_{\text{ms}},\Pi v) - a(u_{\text{ms}},v-\Pi v).
\end{equation*}
Thus, using \eqref{eq:coarse}, we have
\begin{equation}
a(u-u_{\text{ms}},v) = (f,v-\Pi v)  - a(u_{\text{ms}},v-\Pi v).
\label{eq:err1}
\end{equation}
Writing (\ref{eq:err1}) as a sum over coarse regions,
\begin{equation}
a(u-u_{\text{ms}},v) = \sum_{i=1}^N \Big( \int_{\omega_i} f (v-P_i v) \chi_i 
- \int_{\omega_i} a\nabla u_{\text{ms}} \cdot \nabla ( (v-P_i v)\chi_i ) \Big).
\label{eq:err2}
\end{equation}

Using the definition of $Q_i$, we see that \eqref{eq:err2} can be written as
\begin{equation*}
a(u-u_{\text{ms}},v) = \sum_{i=1}^N Q_i (v - P_i v). 
\end{equation*}
Thus, we have
\begin{equation*}
a(u-u_{\text{ms}},v) \leq \sum_{i=1}^N \| Q_i \| \| v-P_i v\|_{L^2(\omega_i)}. 
\end{equation*}
Using the definition of $\widetilde{\kappa}_i$, we have
\begin{equation*}
a(u-u_{\text{ms}},v) \leq \sum_{i=1}^N    ( \widetilde{\kappa}_i)^{-\frac{1}{2}} \| Q_i \| \|  \widetilde{\kappa}^{\frac{1}{2}} (v-P_i v)\|_{L^2(\omega_i)}. 
\end{equation*}
Thus, by the definition of the eigenvalue problem \eqref{offeig},
\begin{equation*}
a(u-u_{\text{ms}},v) \leq \sum_{i=1}^N    ( \widetilde{\kappa}_i)^{-\frac{1}{2}} (\lambda_{l_i+1}^{\omega_i})^{-\frac{1}{2}} \| Q_i \| \| v\|_{V_i}
\end{equation*}
The inequality \eqref{eq:res1} is then followed by taking $v = u-u_{\text{ms}}$
and $\sum_{i=1}^N \| v\|^2_{V_i} \leq C \|v\|^2_V$.

Using the definition of $R_i$, we see that \eqref{eq:err2} can be written as
\begin{equation*}
a(u-u_{\text{ms}},v) = \sum_{i=1}^N R_i ( \chi_i(v - P_i v) ). 
\end{equation*}
Thus, we have
\begin{equation*}
a(u-u_{\text{ms}},v) \leq \sum_{i=1}^N \| R_i \|_{V_i^*}  \|\chi_i( v-P_i v)\|_{V_i}. 
\end{equation*}
Using \eqref{eq:eigenbound}, 
\begin{equation*}
a(u-u_{\text{ms}},v) \leq C_{\text{conv}} \sum_{i=1}^N \| R_i \|_{V_i^*}  (\lambda_{l_i+1}^{\omega_i})^{-\frac{1}{2}} \| v\|_{V_i}. 
\end{equation*}
The inequality \eqref{eq:res2} is then followed by taking $v = u-u_{\text{ms}}$
and $\sum_{i=1}^N \| v\|^2_{V_i} \leq C \|v\|^2_V$.

\subsection{Some auxiliary lemmas}

In this section, we will prove some auxiliary lemmas which are required for the proof of
the convergence of the adaptive enrichment algorithm
stated in Theorem \ref{thm:conv}. 
We use the notation $P_i^m$ 
to denote the projection operator $P_i$ at the enrichment level $m$. 
Specifically, we define
\begin{equation*}
P_i^m v = \sum_{k=1}^{l_i^m} \Big( \int_{\omega_i} \widetilde{\kappa} v \psi_k^{\omega_i,\text{off}} \Big) \psi_k^{\omega_i,\text{off}}.
\end{equation*}

In Theorem \ref{thm:post}, we see that $\|R_i\|_{V_i^*}$
gives an upper bound of the energy error $\|u-u_{\text{ms}}\|_V$. 
We will first show that, $\|R_i\|_{V_i^*}$
is also a lower bound up to a correction term. 
To state this precisely, we define
\begin{equation}
S_m(\omega_i) = (\lambda_{l^m_i+1}^{\omega_i})^{-\frac{1}{2}} \sup_{v\in V_i} \frac{ |R_i(v - (P_i^{m+1} v)\chi_i) |}{\|v\|_{V_i}}
\label{eq:defSR}
\end{equation}
which is a measure on how small $(v-\chi_i P_i^{m+1}v)$ is. 
Notice that the residual $R_i$ is computed using the solution $u_{\text{ms}}^m$ obtained at enrichment level $m$.
We omit the index $m$ in $R_i$ to simplify notations. Next,
we will prove the following lemma.

\begin{lemma}
\label{lem:R1}
We have
\begin{equation}
\| R_i \|^2_{V_i^*} (\lambda_{l^m_i+1}^{\omega_i})^{-1} 
\leq 2(C_{\text{stab}}^{\omega_i})^2 (\lambda_{l^m_i+1}^{\omega_i})^{-1}   \|u_{\text{ms}}^{m+1}-u_{\text{ms}}^m\|_{V_i}^2 + 2S_m(\omega_i)^2
\label{eq:Rbound}
\end{equation}
\end{lemma}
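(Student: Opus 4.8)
The plan is to bound $\|R_i\|_{V_i^*}$ from above by testing $R_i$ against an arbitrary $v\in V_i$ and splitting $v$ through the level-$(m+1)$ spectral projection. Writing $w = \chi_i(P_i^{m+1}v)$, I would decompose $R_i(v) = R_i(v - w) + R_i(w)$ and estimate the two pieces separately: the first directly from the definition \eqref{eq:defSR} of $S_m(\omega_i)$, and the second from Galerkin orthogonality at enrichment level $m+1$.

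For the first piece, the definition \eqref{eq:defSR} gives at once $|R_i(v-w)| \le (\lambda_{l_i^m+1}^{\omega_i})^{1/2} S_m(\omega_i)\,\|v\|_{V_i}$, since $w = \chi_i P_i^{m+1}v$ is exactly the quantity subtracted there. The real work is the second piece. The key observation is that $w$ is supported in $\omega_i$, vanishes on $\partial\omega_i$, and lies in the offline space $V_{\text{off}}^{m+1}$, being a linear combination of the basis functions $\chi_i\psi_k^{\omega_i,\text{off}}$ for $k=1,\dots,l_i^{m+1}$ retained at level $m+1$. Hence $w$ is an admissible test function in \eqref{eq:solve}, so that $\int_{\omega_i} fw = a(u_{\text{ms}}^{m+1},w) = \int_{\omega_i} a\nabla u_{\text{ms}}^{m+1}\cdot\nabla w$ (the global integral collapses to $\omega_i$ since $w$ is supported there). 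Recalling that $R_i$ is formed with $u_{\text{ms}}^m$, I would substitute this identity to obtain $R_i(w) = \int_{\omega_i} a\nabla(u_{\text{ms}}^{m+1}-u_{\text{ms}}^m)\cdot\nabla w$, whence Cauchy--Schwarz in the local energy inner product together with the stability bound \eqref{eq:eigenstab} yields $|R_i(w)| \le C_{\text{stab}}^{\omega_i}\,\|u_{\text{ms}}^{m+1}-u_{\text{ms}}^m\|_{V_i}\,\|v\|_{V_i}$.

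Combining the two estimates, dividing by $\|v\|_{V_i}$ and taking the supremum over $v\in V_i$ gives $\|R_i\|_{V_i^*} \le (\lambda_{l_i^m+1}^{\omega_i})^{1/2}S_m(\omega_i) + C_{\text{stab}}^{\omega_i}\|u_{\text{ms}}^{m+1}-u_{\text{ms}}^m\|_{V_i}$. Squaring via $(a+b)^2\le 2a^2+2b^2$ and multiplying through by $(\lambda_{l_i^m+1}^{\omega_i})^{-1}$ produces exactly \eqref{eq:Rbound}.

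The step I expect to require the most care is justifying that $w=\chi_i(P_i^{m+1}v)$ genuinely belongs to $V_{\text{off}}^{m+1}\cap V_i$, so that Galerkin orthogonality applies: this needs $\chi_i$ to vanish on $\partial\omega_i$ (so that $w\in H^1_0(\omega_i)$) and the level-$(m+1)$ space to contain precisely the functions $\chi_i\psi_k^{\omega_i,\text{off}}$ up through $k=l_i^{m+1}$. A secondary subtlety is that the stability constant in \eqref{eq:eigenstab} is naturally tied to the projection $P_i^{m+1}$ and hence to $\lambda_{l_i^{m+1}+1}^{\omega_i}$; I would absorb it into the $C_{\text{stab}}^{\omega_i}$ of the statement using the monotonicity $\lambda_{l_i^{m+1}+1}^{\omega_i}\ge\lambda_{l_i^m+1}^{\omega_i}$, which guarantees that the level-$(m+1)$ constant is no larger than the level-$m$ one.
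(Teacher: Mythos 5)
Your proposal is correct and follows essentially the same route as the paper's proof: the same splitting $R_i(v) = R_i(v - \chi_i(P_i^{m+1}v)) + R_i(\chi_i(P_i^{m+1}v))$, the same use of \eqref{eq:solve} at level $m+1$ to convert $\int_{\omega_i} f\,\chi_i(P_i^{m+1}v)$ into $\int_{\omega_i} a\nabla u_{\text{ms}}^{m+1}\cdot\nabla(\chi_i(P_i^{m+1}v))$, and the same application of Cauchy--Schwarz with the stability bound \eqref{eq:eigenstab}, followed by squaring via $(a+b)^2 \le 2a^2+2b^2$. The two cautionary points you flag (membership of $\chi_i(P_i^{m+1}v)$ in $V_{\text{off}}^{m+1}\cap V_i$, and the level at which $C_{\text{stab}}^{\omega_i}$ is taken) are handled implicitly in the paper in exactly the way you describe.
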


\begin{proof}
By linearity
\begin{equation*}
R_i(v) = R_i( \chi_i (P_i^{m+1} v) ) + R_i(v - \chi_i(P_i^{m+1} v)).
\end{equation*}
Since $\chi_i(P_i^{m+1} v)$ is a test function in the space $V_{\text{off}}^{m+1}$, by the definition of $R_i$ and \eqref{eq:solve}, we have
\begin{equation*}
\begin{split}
R_i(\chi_i(P_i^{m+1} v)) &= \int_{\omega_i} f (P_i^{m+1} v) \chi_i - \int_{\omega_i} a\nabla u^m_{\text{ms}}\cdot \nabla ( (P_i^{m+1} v)\chi_i) \\
&= \int_{\omega_i} a\nabla u^{m+1}_{\text{ms}} \cdot \nabla ( (P_i^{m+1} v)\chi_i) - \int_{\omega_i} a\nabla u^m_{\text{ms}}\cdot \nabla ( (P_i^{m+1}v) \chi_i).
\end{split}
\end{equation*}
Using the stability estimate \eqref{eq:eigenstab},
\begin{equation*}
R_i( \chi_i(P_i^{m+1} v )) \leq \| u_{\text{ms}}^{m+1} - u_{\text{ms}}^m \|_{V_i} \| (P_i^{m+1} v)\chi_i \|_{V_i} 
\leq  C_{\text{stab}}^{\omega_i} \| u_{\text{ms}}^{m+1} - u_{\text{ms}}^m \|_{V_i}  \|v\|_{V_i}.
\end{equation*}
Thus, we obtain
\begin{equation}
\| R_i\|_{V_i^*} \leq 
C_{\text{stab}}^{\omega_i}  \|u_{\text{ms}}^{m+1}-u_{\text{ms}}^m\|_{V_i} + \sup_{v\in V_i} \frac{ |R_i(v - (P_i^{m+1} v)\chi_i) |}{\|v\|_{V_i}}
\label{eq:Rbound1}
\end{equation}
The inequality \eqref{eq:Rbound} follows from the definition of $S_m(\omega_i)$. 
\end{proof}

We remark that one can replace $u_{\text{ms}}^{m+1}$
by $u_{\text{snap}}$
and $P^{m+1}_i$ by $P^{\text{snap}}_i$,
where $P^{\text{snap}}_i$ is the projection onto the snapshot space defined by
\begin{equation*}
P_i^{\text{snap}} v = \sum_{k=1}^{W_i} \Big( \int_{\omega_i} \widetilde{\kappa} v \psi_k^{\omega_i,\text{off}} \Big) \psi_k^{\omega_i,\text{off}}.
\end{equation*}
We also define $S(\omega_i)$ by
\begin{equation*}
S(\omega_i) = (\lambda_{l^m_i+1}^{\omega_i})^{-\frac{1}{2}} \sup_{v\in V_i} \frac{ |R_i(v - (P_i^{\text{snap}} v)\chi_i) |}{\|v\|_{V_i}}.
\end{equation*}
Following the proof of the above lemma, we get
\begin{equation*}
\| R_i \|^2_{V_i^*} (\lambda_{l^m_i+1}^{\omega_i})^{-1} 
\leq 2 (C_{\text{stab}}^{\omega_i})^2 (\lambda_{l^m_i+1}^{\omega_i})^{-1}   \|u_{\text{snap}}-u_{\text{ms}}^m\|_{V_i}^2 + 2 S(\omega_i)^2
\end{equation*}
which suggests that $\| R_i \|^2_{V_i^*} (\lambda_{l^m_i+1}^{\omega_i})^{-1}$ gives a lower bound of
the error $\|u_{\text{snap}}-u_{\text{ms}}^m\|_{V_i}^2$
up to a correction term $S(\omega_i)^2$.

To prove Theorem \ref{thm:conv}, we will
need the following recursive properties for $S_m(\omega_i)$. 

\begin{lemma}
\label{lem:recurR}
For any $\alpha_R >0$, we have
\begin{equation}
S_{m+1}(\omega_i)^2 \leq (1+\alpha_R) C_R \frac{\lambda_{l_i^m+1}^{\omega_i}}{\lambda_{l_i^{m+1}+1}^{\omega_i}} S_m(\omega_i)^2
+ (1+\alpha_R^{-1}) D_R   (\lambda_{l_i^{m+1}+1}^{\omega_i})^{-1}  \|u_\text{ms}^{m+1}-u_{\text{ms}}^m\|_{V_i}^2
\label{eq:Sbound}
\end{equation}
where the enrichment level dependent constants $C_R$ and $D_R$ are defined by
\begin{equation*}
C_R = (1 + 2 C_{\text{conv}}^{\omega_i} (\lambda_{l_i^{m}+1}^{\omega_i})^{-\frac{1}{2}} (\lambda_{l_i^{m+1}+1}^{\omega_i})^{-\frac{1}{2}})^2
\quad\text{and}\quad
D_R = (C_{\text{stab}}^{\omega_i})^2  (1 +  2 C_{\text{conv}}^{\omega_i} (\lambda_{l_i^{m+1}+1}^{\omega_i})^{-\frac{1}{2}})^2.
\end{equation*}
\end{lemma}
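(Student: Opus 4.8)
The plan is to strip off the eigenvalue weights first and reduce the statement to a clean triangle inequality. Writing $G_m := \sup_{v\in V_i}|R_i(v-\chi_i P_i^{m+1}v)|/\|v\|_{V_i}$ for the supremum in \eqref{eq:defSR}, we have $S_m(\omega_i)=(\lambda_{l_i^m+1}^{\omega_i})^{-1/2}G_m$ and, analogously, $S_{m+1}(\omega_i)=(\lambda_{l_i^{m+1}+1}^{\omega_i})^{-1/2}G_{m+1}$ with $G_{m+1}:=\sup_{v\in V_i}|R_i^{m+1}(v-\chi_i P_i^{m+2}v)|/\|v\|_{V_i}$. Substituting these into \eqref{eq:Sbound} and multiplying by $\lambda_{l_i^{m+1}+1}^{\omega_i}$, every eigenvalue factor cancels and the claim becomes equivalent to $G_{m+1}^2\le(1+\alpha_R)C_R\,G_m^2+(1+\alpha_R^{-1})D_R\,\|u_{\text{ms}}^{m+1}-u_{\text{ms}}^m\|_{V_i}^2$. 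By the elementary Young inequality $(a+b)^2\le(1+\alpha_R)a^2+(1+\alpha_R^{-1})b^2$, it therefore suffices to establish the linear bound $G_{m+1}\le\sqrt{C_R}\,G_m+\sqrt{D_R}\,\|u_{\text{ms}}^{m+1}-u_{\text{ms}}^m\|_{V_i}$, and the choice of $\alpha_R$ is then free.

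To prove the linear bound I would fix $v\in V_i$ with $\|v\|_{V_i}=1$ and decompose the functional inside $G_{m+1}$ using two identities: the residual identity $R_i^{m+1}(w)=R_i^m(w)-b_i(u_{\text{ms}}^{m+1}-u_{\text{ms}}^m,w)$, where $b_i(\cdot,\cdot)=\int_{\omega_i}a\nabla\cdot\nabla\cdot$ is the local energy form, and the projection identity $v-\chi_i P_i^{m+2}v=(v-\chi_i P_i^{m+1}v)-\chi_i(P_i^{m+2}-P_i^{m+1})v$. This produces three contributions: a principal term $R_i^m(v-\chi_i P_i^{m+1}v)$, which is exactly the functional defining $G_m$ and is bounded by $G_m$; an increment term $b_i(u_{\text{ms}}^{m+1}-u_{\text{ms}}^m,\,v-\chi_i P_i^{m+2}v)$, handled by Cauchy--Schwarz in the local energy inner product together with the stability bound \eqref{eq:eigenstab} for $\|\chi_i P_i^{m+2}v\|_{V_i}$ (this is what generates the factor $C_{\text{stab}}^{\omega_i}$ in $D_R$); and a projection-difference term $R_i^m(\chi_i(P_i^{m+2}-P_i^{m+1})v)$. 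For the norm of the projection difference I would write $\chi_i(P_i^{m+2}-P_i^{m+1})v=\chi_i(v-P_i^{m+1}v)-\chi_i(v-P_i^{m+2}v)$ and apply the spectral convergence estimate \eqref{eq:eigenbound} at levels $m+1$ and $m+2$; since $\lambda_{l_i^{m+2}+1}^{\omega_i}\ge\lambda_{l_i^{m+1}+1}^{\omega_i}$ this gives $\|\chi_i(P_i^{m+2}-P_i^{m+1})v\|_{V_i}\le 2C_{\text{conv}}^{\omega_i}(\lambda_{l_i^{m+1}+1}^{\omega_i})^{-1/2}$, which is the origin of the recurring factor $2C_{\text{conv}}^{\omega_i}(\lambda_{l_i^{m+1}+1}^{\omega_i})^{-1/2}$ in both $C_R$ and $D_R$.

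The main obstacle is precisely the projection-difference term $R_i^m(\chi_i(P_i^{m+2}-P_i^{m+1})v)$. Because the new eigenfunctions it contains carry indices exceeding $l_i^m$, the level-$m$ residual does not annihilate it and it cannot be discarded by Galerkin orthogonality at level $m$. The remedy is a second splitting of its argument into its own $G_m$-remainder plus a $\chi_i P_i^{m+1}$-projection: the remainder is controlled by $G_m$ times the small norm found above, while the projection part is a genuine test function in $V_{\text{off}}^{m+1}$, so by the level-$(m+1)$ Galerkin relation \eqref{eq:solve} it equals $b_i(u_{\text{ms}}^{m+1}-u_{\text{ms}}^m,\cdot)$ and feeds into the $D_R$ contribution. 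The delicate bookkeeping is to arrange this splitting so that the eigenvalue weights combine into the exact product $(\lambda_{l_i^m+1}^{\omega_i})^{-1/2}(\lambda_{l_i^{m+1}+1}^{\omega_i})^{-1/2}$ displayed in $C_R$, rather than the cruder single convergence factor one obtains by estimating through the full residual norm $\|R_i^m\|_{V_i^*}$; here one trades one power of the level-$m$ eigenvalue using the relation $G_m=(\lambda_{l_i^m+1}^{\omega_i})^{1/2}S_m$. Once the three contributions are collected, one reads off the coefficients $\sqrt{C_R}=1+2C_{\text{conv}}^{\omega_i}(\lambda_{l_i^m+1}^{\omega_i})^{-1/2}(\lambda_{l_i^{m+1}+1}^{\omega_i})^{-1/2}$ and $\sqrt{D_R}=C_{\text{stab}}^{\omega_i}(1+2C_{\text{conv}}^{\omega_i}(\lambda_{l_i^{m+1}+1}^{\omega_i})^{-1/2})$, and the final application of Young's inequality yields \eqref{eq:Sbound}.
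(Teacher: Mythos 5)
Your proposal is correct and follows essentially the same route as the paper's proof: your two identities (the residual identity and the projection identity) reproduce exactly the paper's decomposition \eqref{err}; your treatment of the increment term by Cauchy--Schwarz and the stability bound \eqref{eq:eigenstab} is the paper's estimate of $I_1$; and your handling of the projection-difference term---splitting off the $\chi_i P_i^{m+1}$-projection, converting it to $b_i(u_{\text{ms}}^{m+1}-u_{\text{ms}}^m,\cdot)$ by the level-$(m+1)$ Galerkin relation \eqref{eq:solve}, and bounding $\|\chi_i(P_i^{m+2}-P_i^{m+1})v\|_{V_i}\leq 2C_{\text{conv}}^{\omega_i}(\lambda_{l_i^{m+1}+1}^{\omega_i})^{-1/2}\|v\|_{V_i}$ via \eqref{eq:eigenbound}---is an inlined version of the paper's bound $I_2\leq 2C_{\text{conv}}^{\omega_i}(\lambda_{l_i^{m+1}+1}^{\omega_i})^{-1}\|R_i\|_{V_i^*}$ combined with \eqref{eq:Rbound1}. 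Cancelling the eigenvalue weights at the outset and invoking Young's inequality explicitly at the end are cosmetic differences, not a different method.

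One caveat on the constants: the ``delicate bookkeeping'' you invoke to recover the displayed $C_R$ exactly does not exist. Once the weights are cancelled, the relation $G_m=(\lambda_{l_i^m+1}^{\omega_i})^{1/2}S_m(\omega_i)$ offers no trade (the $G$-form target is already fixed by the equivalence), and your three estimates yield the $G_m$-coefficient $1+2C_{\text{conv}}^{\omega_i}(\lambda_{l_i^{m+1}+1}^{\omega_i})^{-1/2}$, i.e.\ $C_R=\big(1+2C_{\text{conv}}^{\omega_i}(\lambda_{l_i^{m+1}+1}^{\omega_i})^{-1/2}\big)^2$, which implies the stated form only when $\lambda_{l_i^m+1}^{\omega_i}\leq 1$. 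This is not a defect of your argument relative to the paper: the paper's own penultimate display makes precisely the same silent substitution of $(\lambda_{l_i^{m}+1}^{\omega_i})^{-1/2}(\lambda_{l_i^{m+1}+1}^{\omega_i})^{-1/2}$ for $(\lambda_{l_i^{m+1}+1}^{\omega_i})^{-1/2}$ in the $S_m$-coefficient, and either version of $C_R$ serves the contraction argument of Theorem \ref{thm:conv}, where only $\delta=C_S\max_i\lambda_{l_i^m+1}^{\omega_i}/\lambda_{l_i^{m+1}+1}^{\omega_i}<1$ is used. But in a final write-up you should state the constant your estimate actually produces rather than assert that it matches the displayed one.
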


\begin{proof}
By direct calculation, we have
\begin{equation}
\begin{split}
&\: \int_{\omega_i} f (v - (P_i^{m+2} v) \chi_i)  - \int_{\omega_i} a\nabla u_{\text{ms}}^{m+1} \cdot \nabla ( v- (P_i^{m+2}v)\chi_i ) \\
= &\: \int_{\omega_i} f (v - (P_i^{m+1} v )\chi_i)  - \int_{\omega_i} a\nabla u_{\text{ms}}^{m} \cdot \nabla ( v- (P_i^{m+1}v)\chi_i ) \\
&\: - \int_{\omega_i} a \nabla (u_{\text{ms}}^{m+1}-u_{\text{ms}}^m) \cdot \nabla (v - (P_i^{m+2}v)\chi_i) \\
&\: + \int_{\omega_i} f (P_i^{m+1}v - P_i^{m+2}v)\chi_i - \int_{\omega_i} a\nabla u_{\text{ms}}^m \cdot \nabla ( (P_i^{m+1}v-P_i^{m+2}v)\chi_i).
\end{split}
\label{err}
\end{equation}
By definition of $S_m(\omega_i)$, we have
\begin{equation}
S_m(\omega_i) = (\lambda_{l_i^m+1}^{\omega_i})^{-\frac{1}{2}}
\sup_{v\in V_i} \frac{ | \int_{\omega_i} f (v - (P_i^{m+1} v )\chi_i)  - \int_{\omega_i} a\nabla u_{\text{ms}}^{m} \cdot \nabla ( v- (P_i^{m+1}v)\chi_i ) | }{\| v\|_{V_i}}.
\end{equation}
Multiplying \eqref{err} by $(\lambda_{l_i^{m+1}+1}^{\omega_i})^{-\frac{1}{2}} \|v\|_{V_i}^{-1}$ and taking supremum with respect to $v$, we have
\begin{equation}
S_{m+1}(\omega_i) \leq (\frac{\lambda_{l_i^m+1}^{\omega_i}}{\lambda_{l_i^{m+1}+1}^{\omega_i}})^{\frac{1}{2}} S_m(\omega_i)
+ I_1 + I_2
\label{err1}
\end{equation}
where
\begin{equation*}
I_1 = (\lambda_{l_i^{m+1}+1}^{\omega_i})^{-\frac{1}{2}} \sup_{v\in V_i} \frac{ |- \int_{\omega_i} a \nabla (u_{\text{ms}}^{m+1}-u_{\text{ms}}^m) \cdot \nabla (v - (P_i^{m+2}v)\chi_i)| }{\|v\|_{V_i}}
\end{equation*}
and
\begin{equation*}
I_2 = (\lambda_{l_i^{m+1}+1}^{\omega_i})^{-\frac{1}{2}} \sup_{v\in V_i} \frac{ | \int_{\omega_i} f (P_i^{m+1}v - P_i^{m+2}v)\chi_i - \int_{\omega_i} a\nabla u_{\text{ms}}^m \cdot \nabla ( (P_i^{m+1}v-P_i^{m+2}v)\chi_i) |}{\|v\|_{V_i}}.
\end{equation*}
To estimate $I_1$, we use the stability estimate \eqref{eq:eigenstab} to obtain
\begin{equation*}
\begin{split}
&\int_{\omega_i} a \nabla (u_{\text{ms}}^{m+1}-u_{\text{ms}}^m) \cdot \nabla (v - (P_i^{m+2}v)\chi_i) \\
=& \int_{\omega_i} a \nabla (u_{\text{ms}}^{m+1}-u_{\text{ms}}^m) \cdot \nabla v
- \int_{\omega_i} a \nabla (u_{\text{ms}}^{m+1}-u_{\text{ms}}^m) \cdot \nabla ( (P_i^{m+2}v)\chi_i )\\
\leq &C_{\text{stab}}^{\omega_i} \| u_{\text{ms}}^{m+1} - u_{\text{ms}}^m\|_{V_i}  \|v\|_{V_i}
\end{split}
\end{equation*}
which implies
\begin{equation*}
I_1 \leq C_{\text{stab}}^{\omega_i} (\lambda_{l_i^{m+1}+1}^{\omega_i})^{-\frac{1}{2}} \| u_{\text{ms}}^{m+1} - u_{\text{ms}}^m \|_{V_i}. 
\end{equation*}
To estimate $I_2$, we use the definition of $R_i$ to obtain
\begin{equation*}
I_2 \leq (\lambda_{l_i^{m+1}+1}^{\omega_i})^{-\frac{1}{2}} \| R_i \|_{V_i^*}  \sup_{v\in V_i} \frac{ \| \chi_i(P_i^{m+1}v - P_i^{m+2}v) \|_{V_i}}{ \|v\|_{V_i}}.
\end{equation*}
By the convergence bound \eqref{eq:eigenbound} and the fact that
$\lambda_{l_i^{m+1}+1}^{\omega_i} < \lambda_{l_i^{m+2}+1}^{\omega_i}$, we have
\begin{equation*}
\| \chi_i(P_i^{m+1}v - P_i^{m+2}v) \|_{V_i} 
\leq  \| \chi_i(P_i^{m+1}v - v) \|_{V_i} +  \| \chi_i(v - P_i^{m+2}v) \|_{V_i} \leq 2 C_{\text{conv}}^{\omega_i}  (\lambda_{l_i^{m+1}+1}^{\omega_i})^{-\frac{1}{2}}  \|v\||_{V_i}
\end{equation*}
which implies
\begin{equation*}
I_2 \leq  2 C_{\text{conv}}^{\omega_i} (\lambda_{l_i^{m+1}+1}^{\omega_i})^{-1} \|R_i\|_{V_i^*}. 
\end{equation*}
Combining results and using \eqref{err1}, we get 
\begin{equation*}
S_{m+1}(\omega_i) \leq 
(\frac{\lambda_{l_i^m+1}^{\omega_i}}{\lambda_{l_i^{m+1}+1}^{\omega_i}})^{\frac{1}{2}} S_m(\omega_i)
+ C_{\text{stab}}^{\omega_i} (\lambda_{l_i^{m+1}+1}^{\omega_i})^{-\frac{1}{2}} \| u_{\text{ms}}^{m+1} - u_{\text{ms}}^m \|_{V_i}
+  2 C_{\text{conv}}^{\omega_i} (\lambda_{l_i^{m+1}+1}^{\omega_i})^{-1} \|R_i\|_{V_i^*}.
\end{equation*}
Using \eqref{eq:Rbound1} and the definition of $S_m(\omega_i)$,
\begin{equation*}
\begin{split}
S_{m+1}(\omega_i) \leq 
& (1 + 2 C_{\text{conv}}^{\omega_i} (\lambda_{l_i^{m}+1}^{\omega_i})^{-\frac{1}{2}} (\lambda_{l_i^{m+1}+1}^{\omega_i})^{-\frac{1}{2}}) (\frac{\lambda_{l_i^m+1}^{\omega_i}}{\lambda_{l_i^{m+1}+1}^{\omega_i}})^{\frac{1}{2}} S_m(\omega_i) \\
&+ C_{\text{stab}}^{\omega_i} (\lambda_{l_i^{m+1}+1}^{\omega_i})^{-\frac{1}{2}} (1 + 2 C_{\text{conv}}^{\omega_i} (\lambda_{l_i^{m+1}+1}^{\omega_i})^{-\frac{1}{2}})
\| u_{\text{ms}}^{m+1} - u_{\text{ms}}^m \|_{V_i}.
\end{split}
\end{equation*}
Hence, \eqref{eq:Sbound} is proved.
\end{proof}

Next, we consider the $L^2$-based residual $Q_i$
and prove similar inequalities. 
We define
\begin{equation}
S_m(\omega_i) = (\widetilde{\kappa}_i \lambda_{l^m_i+1}^{\omega_i})^{-\frac{1}{2}} \sup_{v\in L^2(\omega_i)} \frac{ |Q_i( v - P_i^{m+1} v) |}{\|v\|_{L^2(\omega_i)}}
\label{eq:defSQ}
\end{equation}
which is a measure on how small $(v- P_i^{m+1}v)$ is. 
Notice that the residual $Q_i$ is computed using the solution $u_{\text{ms}}^m$ obtained at enrichment level $m$.
We omit the index $m$ in $Q_i$ to simplify notations. 
We also note that we have used the same notation $S_m(\omega_i)$ as the case for the $H^{-1}$-based residual
to again simplify notations.
It will be clear which residual we are referring to when the notation $S_m(\omega_i)$ appears in the text. 
We define the jump of the coefficient in each coarse region by
\begin{equation*}
\beta_i = \frac{ \max_{x\in\omega_i} \kappa(x)}{ \min_{x\in\omega_i} \kappa(x) }.
\end{equation*}
Next,
we will prove the following lemma.

\begin{lemma}
\label{lem:Q1}
We have
\begin{equation}
\| Q_i \|^2 ( \widetilde{\kappa}_i \lambda_{l^m_i+1}^{\omega_i})^{-1} 
\leq 2 (C_{\text{inv}} \beta_i^{\frac{1}{2}} h^{-1})^2 ( \lambda_{l^m_i+1}^{\omega_i})^{-1}   \|u_{\text{ms}}^{m+1}-u_{\text{ms}}^m\|_{V_i}^2 + 2 S_m(\omega_i)^2
\label{eq:Qbound}
\end{equation}
\end{lemma}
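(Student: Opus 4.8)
The plan is to mirror the argument for Lemma \ref{lem:R1}, replacing the energy-based stability estimate \eqref{eq:eigenstab} by an inverse inequality on the fine grid; this is precisely where the contrast factor $\beta_i$ will enter. First I would split $Q_i$ by linearity as $Q_i(v) = Q_i(P_i^{m+1}v) + Q_i(v - P_i^{m+1}v)$, noting that $\chi_i(P_i^{m+1}v)$ is an admissible test function in $V_{\text{off}}^{m+1}$. Applying \eqref{eq:solve} at level $m+1$ to rewrite $\int_{\omega_i} f(P_i^{m+1}v)\chi_i$ as $\int_{\omega_i} a\nabla u_{\text{ms}}^{m+1}\cdot\nabla((P_i^{m+1}v)\chi_i)$, the first term collapses to
\[
Q_i(P_i^{m+1}v) = \int_{\omega_i} a\nabla(u_{\text{ms}}^{m+1}-u_{\text{ms}}^m)\cdot\nabla((P_i^{m+1}v)\chi_i),
\]
exactly as in Lemma \ref{lem:R1}. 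Cauchy--Schwarz in the $\kappa$-energy inner product then gives $|Q_i(P_i^{m+1}v)| \le \|u_{\text{ms}}^{m+1}-u_{\text{ms}}^m\|_{V_i}\,\|\chi_i(P_i^{m+1}v)\|_{V_i}$.

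The key new step is to control $\|\chi_i(P_i^{m+1}v)\|_{V_i}$ by $\|v\|_{L^2(\omega_i)}$ rather than by $\|v\|_{V_i}$. Here I would establish the inverse-type bound
\[
\|\chi_i(P_i^{m+1}v)\|_{V_i} \le C_{\text{inv}}\,\beta_i^{\frac{1}{2}}\,h^{-1}\,\widetilde{\kappa}_i^{\frac{1}{2}}\,\|v\|_{L^2(\omega_i)}
\]
by combining four ingredients: (i) $\|\chi_i(P_i^{m+1}v)\|_{V_i}^2 \le (\max_{\omega_i}\kappa)\,\|\nabla(\chi_i P_i^{m+1}v)\|_{L^2(\omega_i)}^2$; (ii) a standard inverse inequality on the fine mesh, $\|\nabla w_h\|_{L^2(\omega_i)} \le C_{\text{inv}} h^{-1}\|w_h\|_{L^2(\omega_i)}$, applied to the fine-grid function $w_h = \chi_i P_i^{m+1}v$; (iii) $0\le\chi_i\le 1$, so that $\|\chi_i P_i^{m+1}v\|_{L^2(\omega_i)} \le \|P_i^{m+1}v\|_{L^2(\omega_i)}$; and (iv) the non-expansiveness of $P_i^{m+1}$ in the $\widetilde{\kappa}$-weighted $L^2$ inner product, in which the offline eigenfunctions (being $S^{\text{off}}$-orthonormal) are orthonormal, together with the definitions $\widetilde{\kappa}_i = \min_{\omega_i}\widetilde{\kappa}$ and $\beta_i = \max_{\omega_i}\kappa / \min_{\omega_i}\kappa$ to convert the weighted $L^2$ control of $P_i^{m+1}v$ back into $\widetilde{\kappa}_i^{1/2}\|v\|_{L^2(\omega_i)}$.

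Finally I would assemble the pieces as in Lemma \ref{lem:R1}: dividing by $\|v\|_{L^2(\omega_i)}$ and taking the supremum yields
\[
\|Q_i\| \le C_{\text{inv}}\beta_i^{\frac{1}{2}}h^{-1}\widetilde{\kappa}_i^{\frac{1}{2}}\|u_{\text{ms}}^{m+1}-u_{\text{ms}}^m\|_{V_i} + \sup_{v\in L^2(\omega_i)}\frac{|Q_i(v-P_i^{m+1}v)|}{\|v\|_{L^2(\omega_i)}},
\]
after which I multiply through by $(\widetilde{\kappa}_i\lambda_{l_i^m+1}^{\omega_i})^{-1/2}$, recognize the second term as $S_m(\omega_i)$ from \eqref{eq:defSQ}, and apply $(a+b)^2\le 2a^2+2b^2$ to obtain \eqref{eq:Qbound}.

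The main obstacle is step (iv) above: tracking how $\max_{\omega_i}\kappa$ and $\min_{\omega_i}\kappa$ interact with the weight $\widetilde{\kappa}$ so that the constant emerges as $\beta_i^{1/2}h^{-1}$ and no worse. This is also the conceptual payoff of the lemma, since, in contrast to the $H^{-1}$ estimate \eqref{eq:Rbound}, the $L^2$ bound carries the extra factor $\beta_i^{1/2}h^{-1}$, which blows up for high-contrast media and thereby explains the poor numerical performance of the $L^2$ indicator reported in Section \ref{sec:623}.
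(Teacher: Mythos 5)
Your overall skeleton is exactly the paper's: split $Q_i$ by linearity, use Galerkin orthogonality \eqref{eq:solve} at level $m+1$ on the test function $\chi_i(P_i^{m+1}v)\in V_{\text{off}}^{m+1}$, apply Cauchy--Schwarz, bound $\|\chi_i(P_i^{m+1}v)\|_{V_i}$ by an inverse-type inequality, then assemble via \eqref{eq:defSQ} and $(a+b)^2\le 2a^2+2b^2$. The gap is in the one step you yourself flag as the obstacle: ingredients (i)--(iv), as stated, do \emph{not} produce the constant $\beta_i^{1/2}$; they produce roughly $\beta_i$. Tracking them honestly: (i) gives $(\max_{\omega_i}\kappa)^{1/2}$, (ii)--(iii) give $C_{\text{inv}}h^{-1}$, and (iv) forces you to go through the weighted norm twice, $\|P_i^{m+1}v\|_{L^2(\omega_i)}\le \widetilde{\kappa}_i^{-1/2}\|\widetilde{\kappa}^{1/2}P_i^{m+1}v\|_{L^2(\omega_i)}\le \widetilde{\kappa}_i^{-1/2}(\max_{\omega_i}\widetilde{\kappa})^{1/2}\|v\|_{L^2(\omega_i)}$, so that altogether
\begin{equation*}
\|\chi_i(P_i^{m+1}v)\|_{V_i}\le C_{\text{inv}}h^{-1}(\max_{\omega_i}\kappa)^{1/2}
\Big(\frac{\max_{\omega_i}\widetilde{\kappa}}{\min_{\omega_i}\widetilde{\kappa}}\Big)^{1/2}\|v\|_{L^2(\omega_i)}.
\end{equation*}
In the representative situation $\widetilde{\kappa}\approx\kappa$ (i.e., $\sum_j H^2|\nabla\chi_j|^2$ has contrast of order one), the right-hand side is $C_{\text{inv}}h^{-1}\,\max_{\omega_i}\kappa\,(\min_{\omega_i}\kappa)^{-1/2}\|v\|_{L^2(\omega_i)}$, whereas your claimed target $C_{\text{inv}}\beta_i^{1/2}h^{-1}\widetilde{\kappa}_i^{1/2}\|v\|_{L^2(\omega_i)}$ is $\approx C_{\text{inv}}h^{-1}(\max_{\omega_i}\kappa)^{1/2}\|v\|_{L^2(\omega_i)}$; your bound is larger by a factor $\beta_i^{1/2}$. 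After squaring, your argument therefore proves \eqref{eq:Qbound} only with $\beta_i$ in place of $\beta_i^{1/2}$ --- a genuine loss precisely in the high-contrast regime that this lemma (and the comparison with Lemma \ref{lem:R1}) is meant to quantify.

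The source of the loss is step (i): by extracting $\max_{\omega_i}\kappa$ \emph{before} applying the inverse inequality, you strip the coefficient from the energy norm, and step (iv) then makes you pay the contrast a second time when the weight is re-inserted. The paper's proof avoids the double payment by using a \emph{weighted} inverse inequality that carries the coefficient through:
\begin{equation*}
\|(P_i^{m+1}v)\chi_i\|_{V_i}\le C_{\text{inv}}h^{-1}\|\widetilde{\kappa}^{1/2}P_i^{m+1}v\|_{L^2(\omega_i)},
\end{equation*}
which one can justify from the product rule $\nabla(\chi_i w)=\chi_i\nabla w+w\nabla\chi_i$, the element-wise inverse inequality $\int_T\kappa|\nabla w|^2\le C h^{-2}\int_T\kappa w^2$ on fine elements (the weight stays on both sides because $\kappa$ is resolved on the fine grid), and the pointwise bound $\kappa|\nabla\chi_i|^2\le H^{-2}\widetilde{\kappa}$ built into the definition of $\widetilde{\kappa}$; here $C_{\text{inv}}$ carries no contrast dependence. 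Combining this with the weighted non-expansiveness of $P_i^{m+1}$ (your ingredient (iv), used correctly here) gives $\|(P_i^{m+1}v)\chi_i\|_{V_i}\le C_{\text{inv}}h^{-1}\|\widetilde{\kappa}^{1/2}v\|_{L^2(\omega_i)}$, and the contrast is then paid exactly once at the end: dividing by $\widetilde{\kappa}_i^{1/2}$ and using $\|\widetilde{\kappa}^{1/2}v\|_{L^2(\omega_i)}\le(\max_{\omega_i}\widetilde{\kappa})^{1/2}\|v\|_{L^2(\omega_i)}$ yields the single factor $(\max_{\omega_i}\widetilde{\kappa}/\min_{\omega_i}\widetilde{\kappa})^{1/2}$, which the paper identifies with $\beta_i^{1/2}$. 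Your final assembly and your closing interpretation of the $\beta_i^{1/2}h^{-1}$ factor are otherwise correct, but to prove the lemma as stated you must replace steps (i)--(ii) by this weighted inverse inequality.
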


\begin{proof}
By linearity
\begin{equation*}
Q_i(v) = Q_i( P_i^{m+1} v ) + Q_i(v - P_i^{m+1} v).
\end{equation*}
By the definition of $Q_i$ and \eqref{eq:solve}, we have
\begin{equation*}
\begin{split}
Q_i( P_i^{m+1} v) &= \int_{\omega_i} f (P_i^{m+1} v) \chi_i - \int_{\omega_i} a\nabla u^m_{\text{ms}}\cdot \nabla ( (P_i^{m+1} v)\chi_i) \\
&= \int_{\omega_i} a\nabla u^{m+1}_{\text{ms}} \cdot \nabla ( (P_i^{m+1} v)\chi_i) - \int_{\omega_i} a\nabla u^m_{\text{ms}}\cdot \nabla ( (P_i^{m+1}v) \chi_i)
\end{split}
\end{equation*}
which implies
\begin{equation*}
Q_i( P_i^{m+1} v ) \leq \| u_{\text{ms}}^{m+1} - u_{\text{ms}}^m \|_{V_i} \| (P_i^{m+1} v)\chi_i \|_{V_i}.
\end{equation*}
Using the inverse inequality,
\begin{equation*}
\| (P_i^{m+1} v)\chi_i \|_{V_i} \leq C_{\text{inv}} h^{-1} \| \widetilde{\kappa}^{\frac{1}{2}} P_i^{m+1} v\|_{L^2(\omega_i)}
\leq C_{\text{inv}} h^{-1} \| \widetilde{\kappa}^{\frac{1}{2}}  v\|_{L^2(\omega_i)}
\end{equation*}
where $C_{\text{inv}}$ is independent of the mesh size. 
Thus, we obtain
\begin{equation}
 (\widetilde{\kappa}_i)^{-\frac{1}{2}} \| Q_i\|_{V_i^*} \leq 
C_{\text{inv}} \beta_i^{\frac{1}{2}} h^{-1}  \|u_{\text{ms}}^{m+1}-u_{\text{ms}}^m\|_{V_i} 
+ (\widetilde{\kappa}_i)^{-\frac{1}{2}} \sup_{v\in L^2(\omega_i)} \frac{ |Q_i(v - P_i^{m+1} v) |}{\|v\|_{L^2(\omega_i)}}.
\label{eq:Qbound1}
\end{equation}
The inequality \eqref{eq:Qbound} follows from the definition of $S_m(\omega_i)$. 
\end{proof}

Next we will prove the following recursive property for $S_m(\omega_i)$. 
The proof follows from the same lines as Lemma \ref{lem:recurR}.

\begin{lemma}
\label{lem:recurQ}
For any $\alpha_Q >0$, we have
\begin{equation}
S_{m+1}(\omega_i)^2 \leq (1+\alpha_Q) C_Q \frac{\lambda_{l_i^m+1}^{\omega_i}}{\lambda_{l_i^{m+1}+1}^{\omega_i}} S_m(\omega_i)^2
+ (1+\alpha_Q^{-1}) D_Q   (\lambda_{l_i^{m+1}+1}^{\omega_i})^{-1}  \|u_\text{ms}^{m+1}-u_{\text{ms}}^m\|_{V_i}^2
\label{eq:SQbound}
\end{equation}
where the enrichment level dependent constants $C_R$ and $D_R$ are defined by
\begin{equation*}
C_Q = (1 + \beta_i^{\frac{1}{2}})^2
\quad\text{and}\quad
D_Q = C_{\text{inv}} \beta_i^{\frac{1}{2}} h^{-1} (2\widetilde{\kappa}_i + \beta_i^{\frac{1}{2}} ).
\end{equation*}
\end{lemma}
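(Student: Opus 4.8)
The plan is to follow the proof of Lemma~\ref{lem:recurR} line by line, replacing the energy-dual residual $R_i$ by the $L^2$-dual residual $Q_i$ and the test norm $\|\cdot\|_{V_i}$ by $\|\cdot\|_{L^2(\omega_i)}$. First I would write the exact analogue of the algebraic identity \eqref{err}: for $v\in L^2(\omega_i)$,
\begin{equation*}
Q_i^{m+1}(v-P_i^{m+2}v) = Q_i^{m}(v-P_i^{m+1}v) - \int_{\omega_i} a\nabla(u_{\text{ms}}^{m+1}-u_{\text{ms}}^m)\cdot\nabla\big((v-P_i^{m+2}v)\chi_i\big) + Q_i^m\big(P_i^{m+1}v-P_i^{m+2}v\big),
\end{equation*}
where I temporarily write $Q_i^m$ for the residual built from $u_{\text{ms}}^m$. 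This splits the level-$(m+1)$ residual into the level-$m$ residual, an increment term measuring $u_{\text{ms}}^{m+1}-u_{\text{ms}}^m$, and a projection-difference term, just as in \eqref{err}. Dividing by $(\widetilde{\kappa}_i)^{1/2}\|v\|_{L^2(\omega_i)}$, multiplying by $(\lambda_{l_i^{m+1}+1}^{\omega_i})^{-1/2}$ and taking the supremum over $v$ yields $S_{m+1}(\omega_i)\le (\lambda_{l_i^m+1}^{\omega_i}/\lambda_{l_i^{m+1}+1}^{\omega_i})^{1/2}S_m(\omega_i)+J_1+J_2$, where the first term is exactly the eigenvalue ratio times $S_m(\omega_i)$ because the normalisations of $S_m$ and $S_{m+1}$ differ only by that ratio.

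For the increment term $J_1$ I would apply Cauchy--Schwarz in the $a$-inner product and then the inverse inequality used in Lemma~\ref{lem:Q1}, namely $\|(v-P_i^{m+2}v)\chi_i\|_{V_i}\le C_{\text{inv}}h^{-1}\|\widetilde{\kappa}^{1/2}(v-P_i^{m+2}v)\|_{L^2(\omega_i)}\le C_{\text{inv}}h^{-1}\|\widetilde{\kappa}^{1/2}v\|_{L^2(\omega_i)}$, the last step because $P_i^{m+2}$ is a contraction in the $\widetilde{\kappa}$-weighted $L^2$ inner product. Converting $\|\widetilde{\kappa}^{1/2}v\|_{L^2(\omega_i)}$ back to $\|v\|_{L^2(\omega_i)}$ costs a factor $(\max_{\omega_i}\widetilde{\kappa}/\widetilde{\kappa}_i)^{1/2}$, controlled by $\beta_i^{1/2}$, giving $J_1\lesssim C_{\text{inv}}\beta_i^{1/2}h^{-1}(\lambda_{l_i^{m+1}+1}^{\omega_i})^{-1/2}\|u_{\text{ms}}^{m+1}-u_{\text{ms}}^m\|_{V_i}$. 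For the projection-difference term $J_2$ I would use the definition of $\|Q_i\|$ as an $L^2$-dual norm together with $\|\widetilde{\kappa}^{1/2}(P_i^{m+1}v-P_i^{m+2}v)\|_{L^2(\omega_i)}\le\|\widetilde{\kappa}^{1/2}v\|_{L^2(\omega_i)}$, again paying a $\beta_i^{1/2}$ to return to $\|v\|_{L^2(\omega_i)}$; then I would replace $(\widetilde{\kappa}_i)^{-1/2}\|Q_i\|$ by the lower bound \eqref{eq:Qbound1}, writing it as $C_{\text{inv}}\beta_i^{1/2}h^{-1}\|u_{\text{ms}}^{m+1}-u_{\text{ms}}^m\|_{V_i}+(\lambda_{l_i^m+1}^{\omega_i})^{1/2}S_m(\omega_i)$. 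The $S_m(\omega_i)$ piece of $J_2$ then combines with the leading term to multiply the eigenvalue ratio by $(1+\beta_i^{1/2})$.

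Collecting, I obtain a linear recursion of the form $S_{m+1}(\omega_i)\le (1+\beta_i^{1/2})(\lambda_{l_i^m+1}^{\omega_i}/\lambda_{l_i^{m+1}+1}^{\omega_i})^{1/2}S_m(\omega_i)+B(\lambda_{l_i^{m+1}+1}^{\omega_i})^{-1/2}\|u_{\text{ms}}^{m+1}-u_{\text{ms}}^m\|_{V_i}$ with $B$ a multiple of $C_{\text{inv}}\beta_i^{1/2}h^{-1}$. Squaring and applying Young's inequality $2ab\le\alpha_Q a^2+\alpha_Q^{-1}b^2$ to the cross term distributes the two contributions with weights $1+\alpha_Q$ and $1+\alpha_Q^{-1}$, producing \eqref{eq:SQbound} and letting me read off $C_Q=(1+\beta_i^{1/2})^2$ and the $h^{-1}\beta_i^{1/2}$-type constant $D_Q$, the precise tracking of the $\widetilde{\kappa}_i$ and $\beta_i^{1/2}$ factors inside $D_Q$ being routine. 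The main obstacle, and the whole reason this lemma differs from Lemma~\ref{lem:recurR}, is that there the test functions are measured in the energy norm, so the spectral convergence bound \eqref{eq:eigenbound} supplies the decay $(\lambda_{l_i+1}^{\omega_i})^{-1/2}$ directly on $\|\chi_i(v-P_i v)\|_{V_i}$; here the test functions are measured in $L^2$, the spectral decay is unavailable for a generic $L^2$ tail, and every passage between the energy and $L^2$ scales must go through the inverse inequality. This is exactly what injects the $h^{-1}$ and the contrast $\beta_i$ into $C_Q$ and $D_Q$, and I expect confirming that these factors land in the stated positions to be the delicate bookkeeping step.
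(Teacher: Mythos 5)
Your proposal is correct and follows essentially the same route as the paper's proof: the same three-term algebraic identity splitting the level-$(m+1)$ residual into the level-$m$ residual, the increment term, and the projection-difference term; the same normalization by $(\widetilde{\kappa}_i\lambda_{l_i^{m+1}+1}^{\omega_i})^{-\frac{1}{2}}\|v\|_{L^2(\omega_i)}^{-1}$; the same use of the inverse inequality and the contraction property of the $\widetilde{\kappa}$-weighted projections for the two error terms; the same substitution of \eqref{eq:Qbound1} to absorb $\|Q_i\|$; and the same Young-inequality squaring at the end. Your diagnosis of why $h^{-1}$ and $\beta_i$ enter (no spectral decay available for generic $L^2$ tails, forcing passage through the inverse inequality) matches the paper's mechanism, and the loose ends you defer as "routine bookkeeping" are exactly the points where the paper's own constants are tracked loosely.
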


\begin{proof}
By direct calculation, we have
\begin{equation}
\begin{split}
&\: \int_{\omega_i} f (v - P_i^{m+2} v) \chi_i  - \int_{\omega_i} a\nabla u_{\text{ms}}^{m+1} \cdot \nabla ( (v- P_i^{m+2}v)\chi_i ) \\
= &\: \int_{\omega_i} f (v - P_i^{m+1} v )\chi_i  - \int_{\omega_i} a\nabla u_{\text{ms}}^{m} \cdot \nabla ( (v- P_i^{m+1}v)\chi_i ) \\
&\: - \int_{\omega_i} a \nabla (u_{\text{ms}}^{m+1}-u_{\text{ms}}^m) \cdot \nabla ((v - P_i^{m+2}v)\chi_i) \\
&\: + \int_{\omega_i} f (P_i^{m+1}v - P_i^{m+2}v)\chi_i - \int_{\omega_i} a\nabla u_{\text{ms}}^m \cdot \nabla ( (P_i^{m+1}v-P_i^{m+2}v)\chi_i).
\end{split}
\label{errQ1}
\end{equation}
By definition of $S_m(\omega_i)$, we have
\begin{equation}
S_m(\omega_i) = ( \widetilde{\kappa}_i \lambda_{l_i^m+1}^{\omega_i})^{-\frac{1}{2}}
\sup_{v\in L^2(\omega_i)} \frac{ | \int_{\omega_i} f (v - P_i^{m+1} v )\chi_i  - \int_{\omega_i} a\nabla u_{\text{ms}}^{m} \cdot \nabla ( (v- P_i^{m+1}v)\chi_i ) | }{\| v\|_{L^2(\omega_i)}}.
\end{equation}
Multiplying \eqref{errQ1} by $( \widetilde{\kappa}_i \lambda_{l_i^{m+1}+1}^{\omega_i})^{-\frac{1}{2}} \|v\|_{L^2(\omega_i)}^{-1}$ and taking supremum with respect to $v$, we have
\begin{equation}
S_{m+1}(\omega_i) \leq (\frac{\lambda_{l_i^m+1}^{\omega_i}}{\lambda_{l_i^{m+1}+1}^{\omega_i}})^{\frac{1}{2}} S_m(\omega_i)
+ I_1 + I_2
\label{errQ2}
\end{equation}
where
\begin{equation*}
I_1 = ( \widetilde{\kappa}_i \lambda_{l_i^{m+1}+1}^{\omega_i})^{-\frac{1}{2}} \sup_{v\in L^2(\omega_i)} \frac{ |- \int_{\omega_i} a \nabla (u_{\text{ms}}^{m+1}-u_{\text{ms}}^m) \cdot \nabla ((v - P_i^{m+2}v)\chi_i)| }{\|v\|_{L^2(\omega_i)}}
\end{equation*}
and
\begin{equation*}
I_2 = ( \widetilde{\kappa} \lambda_{l_i^{m+1}+1}^{\omega_i})^{-\frac{1}{2}} \sup_{v\in L^2(\omega_i)} \frac{ | \int_{\omega_i} f (P_i^{m+1}v - P_i^{m+2}v)\chi_i - \int_{\omega_i} a\nabla u_{\text{ms}}^m \cdot \nabla ( (P_i^{m+1}v-P_i^{m+2}v)\chi_i) |}{\|v\|_{L^2(\omega_i)}}.
\end{equation*}
To estimate $I_1$, we use the inverse inequality to obtain
\begin{equation*}
\int_{\omega_i} a \nabla (u_{\text{ms}}^{m+1}-u_{\text{ms}}^m) \cdot \nabla (v - (P_i^{m+2}v)\chi_i) 
\leq 2 C_{\text{inv}} h^{-1}  \| u_{\text{ms}}^{m+1} - u_{\text{ms}}^m\|_{V_i}  \| \widetilde{\kappa}^{\frac{1}{2}} v\|_{L^2(\omega_i)}
\end{equation*}
which implies
\begin{equation*}
I_1 \leq 2 C_{\text{inv}} ( \widetilde{\kappa}_i \lambda_{l_i^{m+1}+1}^{\omega_i})^{-\frac{1}{2}} \beta_i^{\frac{1}{2}} h^{-1} \| u_{\text{ms}}^{m+1} - u_{\text{ms}}^m \|_{V_i}. 
\end{equation*}
To estimate $I_2$, we use the definition of $Q_i$ to obtain
\begin{equation*}
I_2 \leq (\widetilde{\kappa}_i \lambda_{l_i^{m+1}+1}^{\omega_i})^{-\frac{1}{2}} \| Q_i \|  \sup_{v\in L^2(\omega_i)} \frac{ \| P_i^{m+1}v - P_i^{m+2}v \|}{ \|v\|_{L^2(\omega_i)}}
\end{equation*}
which implies
\begin{equation*}
I_2 \leq (\widetilde{\kappa}_i \lambda_{l_i^{m+1}+1}^{\omega_i})^{-\frac{1}{2}}  \beta_i^{\frac{1}{2}} \| Q_i \|.
\end{equation*}
Combining results and using \eqref{errQ2}, we get 
\begin{equation*}
S_{m+1}(\omega_i) \leq 
(\frac{\lambda_{l_i^m+1}^{\omega_i}}{\lambda_{l_i^{m+1}+1}^{\omega_i}})^{\frac{1}{2}} S_m(\omega_i)
+ 2 C_{\text{inv}} ( \widetilde{\kappa}_i \lambda_{l_i^{m+1}+1}^{\omega_i})^{-\frac{1}{2}} \beta_i^{\frac{1}{2}} h^{-1} \| u_{\text{ms}}^{m+1} - u_{\text{ms}}^m \|_{V_i}
+  (\widetilde{\kappa}_i \lambda_{l_i^{m+1}+1}^{\omega_i})^{-\frac{1}{2}}  \beta_i^{\frac{1}{2}} \|Q_i\|.
\end{equation*}
Using \eqref{eq:Qbound1}, 
\begin{equation*}
S_{m+1}(\omega_i) \leq 
 (1 + \beta_i^{\frac{1}{2}}) (\frac{\lambda_{l_i^m+1}^{\omega_i}}{\lambda_{l_i^{m+1}+1}^{\omega_i}})^{\frac{1}{2}} S_m(\omega_i) 
+ C_{\text{inv}} ( \lambda_{l_i^{m+1}+1}^{\omega_i})^{-\frac{1}{2}}  \beta_i^{\frac{1}{2}} h^{-1} (2\widetilde{\kappa}_i + \beta_i^{\frac{1}{2}} )
\| u_{\text{ms}}^{m+1} - u_{\text{ms}}^m \|_{V_i} 
\end{equation*}
Hence, \eqref{eq:SQbound} is proved.
\end{proof}

\subsection{Proof of Theorem \ref{thm:conv}}

In this section, we prove the convergence of the adaptive enrichment algorithm. 
We will give a unified proof for both the $L^2$-based and $H^{-1}$-based residuals. 
First of all, we use $\eta_i$ as a unified notation for the residuals, namely,
\begin{equation*}
\eta^2_i =
\begin{cases} 
& \|Q_i\|^2  (\widetilde{\kappa}_i \lambda^{\omega_i}_{l^m_i+1})^{-1},\quad \text{ for } L^2\text{-based residual}, \\
& \|R_i\|^2_{V_i^*}  (\lambda^{\omega_i}_{l^m_i+1})^{-1},\quad \text{ for } H^{-1}\text{-based residual}.
\end{cases}
\end{equation*}
Then Lemma \ref{lem:R1} and Lemma \ref{lem:Q1} can be written as
\begin{equation}
\eta_i^2 
\leq B_i ( \lambda_{l^m_i+1}^{\omega_i})^{-1}   \|u_{\text{ms}}^{m+1}-u_{\text{ms}}^m\|_{V_i}^2 + 2 S_m(\omega_i)^2
\label{eq:Uibound}
\end{equation}
where the constant $B_i$ is given by
\begin{equation*}
B_i =
\begin{cases} 
& 2(C_{\text{inv}} \beta_i^{\frac{1}{2}} h^{-1})^2,\quad \text{ for } L^2\text{-based residual} \\
& 2 (C_{\text{stab}}^{\omega_i,m+1})^2,\quad \text{ for } H^{-1}\text{-based residual}
\end{cases}
\end{equation*}
We remark that the definitions of $S_m(\omega_i)$ are given in \eqref{eq:defSQ} and \eqref{eq:defSR}
for the $L^2$-based and $H^{-1}$-based residuals respectively. 
Moreover, Lemma \ref{lem:recurR} and Lemma \ref{lem:recurQ} can be unified as
\begin{equation}
S_{m+1}(\omega_i)^2 \leq (1+\alpha_S) C_S \frac{\lambda_{l_i^m+1}^{\omega_i}}{\lambda_{l_i^{m+1}+1}^{\omega_i}} S_m(\omega_i)^2
+ (1+\alpha_S^{-1}) D_S   (\lambda_{l_i^{m+1}+1}^{\omega_i})^{-1}  \|u_\text{ms}^{m+1}-u_{\text{ms}}^m\|_{V_i}^2
\label{eq:Uibound1}
\end{equation}
where $\alpha_S = \alpha_Q, C_S=C_Q$ and $D_S=D_Q$ for the $L^2$-based residual
while $\alpha_S = \alpha_R, C_S=C_R$ and $D_S=D_R$ for the $H^{-1}$-based residual.
Notice that $\alpha_S>0$ is a constant defined uniformly over coarse regions and is 
to be determined. 
The convergence proof is based on \eqref{eq:Uibound} and \eqref{eq:Uibound1}. 

Let $0 < \theta < 1$. We choose an index set $I$ so that
\begin{equation}
\theta^2 \sum_{i=1}^N \eta_i^2
\leq \sum_{i\in I} \eta_i^2. 
\label{eq:indicator}
\end{equation}
We also assume there is a real number $\gamma$ with 
$0 < \gamma < 1$ satisfies
\begin{equation}
\gamma^2 \sum_{i=1}^n S_m(\omega_i)^2 \leq \sum_{i\in I} S_m(\omega_i)^2.
\end{equation}
We will then add basis function for those $\omega_i$ with $i\in I$. 
Then, using Theorem \ref{thm:post} and \eqref{eq:indicator}, we have
\begin{equation*}
\theta^2 \| u-u_{\text{ms}}^m\|_V^2 \leq \theta^2 C_{\text{err}} \sum_{i=1}^N \eta_i^2
\leq C_{\text{err}} \sum_{i\in I} \eta_i^2. 
\end{equation*}
By (\ref{eq:Uibound}), 
\begin{equation*}
\theta^2 \| u-u_{\text{ms}}^m\|_V^2 \leq 2 C_{\text{err}} \sum_{i=1}^N S_m(\omega_i)^2 + L_1 \| u_H^{m+1}-u_H^m\|_V^2
\end{equation*}
where
\begin{equation}
L_1 =  C_{\text{err}} \max_{1\leq i\leq N} \Big( B_i (\lambda_{l_i^m+1}^{\omega_i})^{-1}  \Big).
\label{eq:assumeL}
\end{equation}
Note that, by Galerkin orthogonality, we have
\begin{equation*}
\| u_{\text{ms}}^{m+1}-u_{\text{ms}}^m\|_V^2 = \|u-u_{\text{ms}}^m\|_V^2 - \|u-u_{\text{ms}}^{m+1}\|_V^2.
\end{equation*}
So, we have
\begin{equation*}
\theta^2 \| u-u_{\text{ms}}^m\|_V^2 \leq 2 C_{\text{err}} \sum_{i=1}^N S_m(\omega_i)^2 + L_1 (\|u-u_H^m\|_V^2 - \|u-u_H^{m+1}\|_V^2)
\end{equation*}
which implies
\begin{equation}
\| u-u_{\text{ms}}^{m+1} \|_V^2 \leq ( 1-\frac{\theta^2}{L_1} ) \| u-u_{\text{ms}}^m\|_V^2 + \frac{2C_{\text{err}}}{L_1} \sum_{i=1}^N S_m(\omega_i)^2.
\label{eq:conv1}
\end{equation}

On the other hand,
\begin{equation*}
\sum_{i=1}^N S_{m+1}(\omega_i)^2
= \sum_{i\in I} S_{m+1}(\omega_i)^2 + \sum_{i\ne I} S_{m+1}(\omega_i)^2
\end{equation*}
By (\ref{eq:Uibound1}) and that $S_{m+1}(\omega_i) = S_m(\omega_i)$ for $i\ne I$, we have
\begin{equation*}
\sum_{i=1}^N S_{m+1}(\omega_i)^2 
\leq 
\sum_{i\in I} \Big( (1+\alpha_S) C_S \frac{\lambda_{l_i^m+1}^{\omega_i}}{\lambda_{l_i^{m+1}+1}^{\omega_i}} S_m(\omega_i)^2
+ (1+\alpha_S^{-1}) D_S   (\lambda_{l_i^{m+1}+1}^{\omega_i})^{-1}  \|u_\text{ms}^{m+1}-u_{\text{ms}}^m\|_{V_i}^2 \Big) + 
\sum_{i\ne I} S_{m}(\omega_i)^2.
\end{equation*}
We assume the enrichment is obtained so that
\begin{equation*}
\delta = C_S \max_{1\leq i\leq N} \frac{\lambda_{l_i^m+1}^{\omega_i}}{\lambda_{l_i^{m+1}+1}^{\omega_i}} < 1.
\end{equation*}
We then have
\begin{equation*}
\sum_{i=1}^N S_{m+1}(\omega_i)^2  
\leq (1+\alpha_S) \sum_{i=1}^N S_{m}(\omega_i)^2 - (1+\alpha_S) (1-\delta) \sum_{i\in I} S_m(\omega_i)^2 +  \delta L_2 \|u_{\text{ms}}^{m+1}-u_{\text{ms}}^m\|_V^2
\end{equation*}
where
\begin{equation}
L_2 =  (1+\alpha_S^{-1}) \max_{1\leq i\leq N} \Big( D_S C_S^{-1} (\lambda_{l_i^m+1}^{\omega_i})^{-1}  \Big).
\label{eq:assumeL2}
\end{equation}
By assumption on $\gamma$,
\begin{equation*}
\sum_{i=1}^N S_{m+1}(\omega_i)^2  
\leq (1+\alpha_S) \sum_{i=1}^N S_{m}(\omega_i)^2 - (1+\alpha_S) (1-\delta) \gamma^2 \sum_{i=1}^N S_m(\omega_i)^2 +  \delta L_2 \|u_{\text{ms}}^{m+1}-u_{\text{ms}}^m\|_V^2.
\end{equation*}
Let $\rho = (1+\alpha_S)( 1 - (1-\delta) \gamma^2)$. We choose $\alpha_S>0$ small so that
$0<\rho<1$. The above is then written as
\begin{equation}
\sum_{i=1}^N S_{m+1}(\omega_i)^2  \leq \rho \sum_{i=1}^N S_{m}(\omega_i)^2 
+  \delta L_2 ( \|u-u_{\text{ms}}^m\|_V^2 - \|u-u_{\text{ms}}^{m+1}\|_V^2 ).
\label{eq:conv2}
\end{equation}
Next, we take a constant $\tau$ so that 
\begin{equation*}
\tau > 0, \quad \frac{2C_{\text{err}}}{\tau L_1} + \rho < 1.
\end{equation*}
Finally, we combine \eqref{eq:conv1} and \eqref{eq:conv2} to obtain the following
\begin{equation*}
\begin{split}
&\: \| u - u_{\text{ms}}^{m+1}\|_V^2 + \tau \sum_{i=1}^N S_{m+1}(\omega_i)^2 \\
\leq &\: ( 1-\frac{\theta^2}{L_1} ) \| u-u_{\text{ms}}^m\|_V^2 + \frac{2C_{\text{err}}}{L_1} \sum_{i=1}^N S_m(\omega_i)^2
+ \tau \rho \sum_{i=1}^N S_{m}(\omega_i)^2  + \tau  \delta L_2 ( \|u-u_{\text{ms}}^m\|_V^2 - \|u-u_{\text{ms}}^{m+1}\|_V^2 ).
\end{split}
\end{equation*}
Rearranging the terms, we have
\begin{equation*}
(1+\tau  \delta L_2) \|u-u_{\text{ms}}^{m+1}\|_V^2 + \tau \sum_{i=1}^N S_{m+1}(\omega_i)^2
\leq ( 1-\frac{\theta^2}{L_1} + \tau \delta L_2 ) \| u-u_{\text{ms}}^m\|_V^2 + (\frac{2C_{\text{err}}}{L_1}+\tau\rho) \sum_{i=1}^N S_m(\omega_i)^2.
\end{equation*}
Hence we obtain
\begin{equation*}
\| u-u_{\text{ms}}^{m+1}\|_V^2 + \frac{\tau}{1+\tau \delta L_2} \sum_{i=1}^N S_{m+1}(\omega_i)^2
\leq ( 1- \frac{\theta^2}{L_1(1+\tau \delta L_2)} ) \|u-u_{\text{ms}}^m\|_V^2 + \frac{\tau}{1+\tau \delta L_2} (\frac{2C_{\text{err}}}{\tau L_1}+\rho) \sum_{i=1}^N S_{m}(\omega_i)^2.
\end{equation*}
Therefore, Theorem \ref{thm:conv} is proved.

\section{Conclusions}

In this paper, we derive an a-posteriori error indicator 
 for the Generalized 
Multiscale Finite Element Method (GMsFEM). 
In particular,
we study an adaptive spectral enrichment procedure and derive an 
 error indicator which gives an estimate of the local error 
over coarse grid regions.
We consider two kinds of error indicators where one is based on
the $L^2$-norm of the local residual
and the other is based on the weighted $H^{-1}$-norm of the local residual
where the weight is related to the coefficient of the elliptic equation.
We show that the use of weighted $H^{-1}$-norm residual 
gives a more robust error indicator
which works well for cases with high contrast multiscale problems. The convergence analysis of the method is given. Numerical results are presented that demonstrate the robustness
of the proposed error indicators. We
show the convergence of the proposed indicators and their similarities to
the ones when exact solution is used in the indicator.
We compare the performance of
the weighted $H^{-1}$-based indicator with
that of the $L^2$-based indicator for high-contrast
problems. Our numerical results show that the former is more appropriate
for high-contrast multiscale problems.

Although the results presented in this paper are encouraging, there
is scope for further exploration. 
As our intent here was to derive and demonstrate
the robustness of error indicators for challenging high-contrast 
multiscale problems, we did not consider
the fine-grid discretization error and assumed that the coarse-grid
error is the main contributor, and thus assuming that the fine-grid
solution is the desired quantity. In general when solving continuous
PDEs, one can also add fine-grid discretization errors due to basis
computations. This will be a subject of our future research.

\bibliographystyle{plain}
\bibliography{references}
\end{document}